\newcommand{\bc}{\begin{center}}
\newcommand{\ec}{\end{center}}
\newcommand{\bt}{\begin{tabular}}
\newcommand{\et}{\end{tabular}} 
\newcommand{\bea}{\begin{eqnarray}}
\newcommand{\eea}{\end{eqnarray}}
\newcommand{\bean}{\begin{eqnarray*}}
\newcommand{\eean}{\end{eqnarray*}}
\newcommand{\ba}{\begin{array}}
\newcommand{\ea}{\end{array}}
\def\be{\begin{eqnarray}}
\def\ee{\end{eqnarray}}
\def\ben{\begin{eqnarray*}}
\def\een{\end{eqnarray*}}
\newcommand{\ra} {\rightarrow}
\newcommand{\nth}{\frac{1}{n}}
\newcommand{\RL}{{\mathbb R}}
\newcommand{\calM}{\mbox{${\cal M}$}}
\newcommand{\Nat}{\mathbb{N}}
\def\elabel#1{\label{e:#1}}
\def\sq{$\Box$}
\def\qed{\ifmmode\sq\else{\unskip\nobreak\hfil
\penalty50\hskip1em\null\nobreak\hfil\sq
\parfillskip=0pt\finalhyphendemerits=0\endgraf}\fi\par\medbreak}
\newsavebox{\junk}
\savebox{\junk}[1.6mm]{\hbox{$|\!|\!|$}}
\def\til={{\widetilde =}}
 \def\eq#1/{(\ref{#1})}
\def\eq#1/{(\ref{e:#1})}
\newcommand{\beqn}[1]{\notes{#1}%
\begin{eqnarray} \elabel{#1}}
\newcommand{\eeqn}{\end{eqnarray} }
\newcommand{\beq}[1]{\notes{#1}%
\begin{equation}\elabel{#1}}
\newcommand{\eeq}{\end{equation}} 
\def\bdes{\begin{description}}
\def\edes{\end{description}}
\def\notes#1{}
\newcommand{\setS}{s}
\newcommand{\setT}{t}
\newcommand{\collS}{\mathcal{C}}
\def\phi{\varphi}
\def\bee{\begin{eqnarray*}}
\def\ene{\end{eqnarray*}}
\newcommand{\conv}{\mathrm{conv}}
\newcommand{\calA}{\mathcal{A}}
\newcommand{\calB}{\mathcal{B}}
\newcommand{\R}{\mathbb{R}}
\newcommand{\N}{\mathbb{N}}
\newtheorem{conj}{Conjecture}[section]
\newtheorem{thm}[conj]{Theorem}
\newtheorem{rem}[conj]{Remark}
\newtheorem{lem}[conj]{Lemma}
\newtheorem{prop}[conj]{Proposition}
\newtheorem{ques}[conj]{Question}
\newtheorem{defn}[conj]{Definition}
\newtheorem{cor}[conj]{Corollary}
\begin{document}

\title{Sumset estimates in convex geometry}
%Combinatorial volume inequalities for convex sets
\author{Matthieu Fradelizi, 
Mokshay Madiman\thanks{supported in part by the U.S. National Science Foundation through grants DMS-1409504 (CAREER) and CCF-1346564.}, 

and Artem Zvavitch\thanks{ supported in part by the U.S. National Science
Foundation Grant DMS-1101636, Simons Foundation, CNRS and the B\'ezout Labex funded by ANR, reference ANR-10-LABX-58.}}

\date{\today}

\maketitle

\begin{abstract}
Sumset estimates, which provide bounds on the cardinality of sumsets of finite sets in a group, form an essential part of the toolkit of additive combinatorics. In recent years,  probabilistic or entropic analogs of many of these inequalities were introduced.  We study analogues of these sumset estimates in the context of convex geometry and Lebesgue measure on $\RL^n$. First, we observe that, with respect to Minkowski summation, volume is supermodular to arbitrary order on the space of convex bodies. Second, we explore sharp constants in the convex geometry analogues of  variants of the Pl\"unnecke-Ruzsa inequalities. In the last section of the paper, we provide connections of these inequalities to the classical Rogers-Shephard inequality.
%We also prove a number of other interesting results, including a sharp generalization of the Rogers-Shephard inequality to distinct convex bodies.
\end{abstract}
%We obtain new inequalities for the volume of the Minkowski sum of convex sets.
%Specifically, we show that, with respect to Minkowski summation,  (i) volume is supermodular of infinite order on the space of convex bodies,
%(ii) volume is log-submodular on the space of zonoids,
%nd (iii) volume fails to be log-submodular on the space of convex bodies.

%\noindent {\bf 2010 Mathematics Subject Classification.}  Primary 60E15 11B13;  Secondary 94A17 60F15.

%\noindent {\bf Keywords.} Sumsets, Brunn-Minkowski.

%\vspace{1cm}
\noindent {\bf Keywords:} Brunn-Minkowski, Alexandrov-Fenchel, mixed volumes, supermodularity, sumset, Pl\"unnecke-Ruzsa inequality.

\noindent {\bf 2020 Mathematics Subject Classification:} Primary: 52A40, 52A39; Secondary: 52A20, 39B62.

%\newpage
\tableofcontents
%\newpage

\section{Introduction}
\label{sec:intro}

Minkowski summation is a basic and ubiquitous operation on sets. Indeed, the Minkowski
sum $A+B = \{a+b : a \in A, b \in B\}$ of sets $A$ and $B$ makes sense as long as $A$ and $B$ are subsets
of an ambient set in which a closed binary operation denoted by $+$ is defined. In particular, this notion makes sense
in any group, and {\it additive combinatorics} (which arose out of exploring the additive structure of sets of integers, but then expanded to the consideration of additive structure in more general groups) is a field of mathematics that is preoccupied with studying what exactly this operation does in a quantitative way. 

``Sumset estimates'' are a collection of inequalities developed in additive combinatorics that provide bounds on the cardinality of sumsets of finite sets in a group. In this paper, we use $\#(A)$ to denote the cardinality of a countable set $A$, and $|A|$ to denote the volume (i.e., $n$-dimensional Lebesgue measure) of $A$ when $A$ is a measurable subset of $\R^n$. The simplest sumset estimate is the two-sided inequality 
$\#(A) \#(B) \geq \#(A+B) \geq \#(A)+\#(B)-1$, which holds for finite subsets $A, B$ of the integers; equality in the second inequality holds only for arithmetic progressions.
A much more sophisticated sumset estimate is Kneser's theorem \cite{Kne53} (cf., \cite[Theorem 5.5]{TV06:book}, \cite{Dev14}), which asserts that for finite, nonempty subsets $A, B$ in any abelian group $G$,
$\#(A+B) \geq \#(A+H) +\#(B+H)-\#(H)$, where $H$ is the stabilizer of $A+B$, i.e., $H=\{g\in G: A+B+g=A+B\}$. Kneser's theorem contains, for example, the Cauchy-Davenport inequality that provides a sharp lower bound on sumset cardinality in $\mathbb{Z}/p\mathbb{Z}$. In the reverse direction of finding upper bounds on cardinality of sumsets, there are the so-called Pl\"unnecke-Ruzsa inequalities \cite{Plu70, Ruz89}. One example of the latter states that if $\#(A+B)\leq \alpha \# A$, then $\#(A+k\cdot B)\leq \alpha^k \# A$, where $k\cdot B$ refers to the sum of $k$ copies of $B$.
Such sumset estimates form an essential part of the toolkit of additive combinatorics.

In the context of the Euclidean space $\RL^n$, inequalities for the volume of Minkowski sums of convex sets, and more generally Borel sets, play a central role in geometry and functional analysis. For example, the well known Brunn-Minkowski inequality can be used to deduce the Euclidean isoperimetric inequality, which identifies the Euclidean ball as the set of any given volume with minimal surface area.
Therefore, it is somewhat surprising that in the literature, there has only been rather limited exploration of geometric analogies of sumset estimates. We work towards correcting that oversight in this contribution. 

The goal of this paper is to explore a variety of new inequalities for volumes of Minkowski sums of convex sets, which have a combinatorial flavor and are inspired by known inequalities in the discrete setting of additive combinatorics. These inequalities are related to the notion of supermodularity: we say that a set function $F:2^{[n]}\ra\RL$ is {\it supermodular} if 
$F(\setS\cup\setT)+F(\setS\cap\setT) \geq F(\setS) + F(\setT)$
for all subsets $\setS, \setT$ of $[n]$, and that $F$ is {\it submodular} if $-F$ is supermodular. 

Our study is motivated by two relatively recent observations.
The first observation motivating this paper, due to \cite{FMMZ18} (Theorem 4.5), states that given convex bodies $A, B_1, B_2$ in $\RL^{n}$, 
$|A+B_1+B_2|+|A|\geq |A+B_1|+|A+B_2|$.
This inequality has a form similar to that of Kneser's theorem-- indeed, observe that the latter can be written as
$\#(A+B+H)+\#(H) \geq \#(A+H) +\#(B+H)$, since adding the stabilizer to $A+B$ does not change it.
Furthermore, it implies that the function $v: 2^{[n]} \to \RL$ defined, for given convex bodies $B_1, \dots, B_k$ in $\RL^{n}$, by  
$v(\setS)=\left|\sum_{i\in \setS}  B_i \right|$
is supermodular. 
Foldes  and Hammer \cite{FH05} defined the notion of higher order supermodularity for set functions. In  Section~\ref{sec:super}, we generalize their definition and main characterization theorem from \cite{FH05} to functions defined on $\RL_+^n$, and apply it to show that volumes and mixed volumes satisfy this higher order supermodularity.

The second observation motivating this paper is due to Bobkov and the second named author  \cite{BM12:jfa}, who  proved that given convex bodies $A, B_1, B_2$ in $\RL^{n}$,  
\begin{equation}\label{eqBM3}
|A+B_1+B_2 | |A| \leq 3^n  |A+B_1| |A+B_2|.
\end{equation}
The above inequality is inspired by an inequality in information theory analogous to the Pl\"unnecke-Ruzsa inequality (the most general version of which was proved by Ruzsa for compact sets in \cite{Ruz97}, and is discussed in Section \ref{secPR} below).
 If not for the multiplicative factor of $3^n$ in (\ref{eqBM3}), this inequality would imply that the {\it logarithm} of the volume of the Minkowski sum of convex sets is submodular. In this sense, it goes in the reverse direction to the supermodularity of volume and thus complements it. However, the constant $3^n$ obtained by \cite{BM12:jfa} is rather loose. We take up the question of tightening this constant in Section~\ref{sec:Pl\"unnecke}. 
 
 Specifically, we obtain both upper and lower bounds for the optimal constant 
 \be\label{cn-def}
 c_n=\sup \frac{|A+B+C | |A| }{|A+B| |A+B|} ,
 \ee
 where the supremum is taken over all convex bodies $A, B, C$ in $\RL^n$,
 in general dimension $n$. We get an upper bound of $c_n\leq \varphi^{n}$ in Section~\ref{ss:gen-ub}, where $\varphi=(1+\sqrt{5})/2$ is the golden ratio, and an asymptotic lower bound of $c_n\geq (4/3+o(1))^n$ in Section~\ref{ss:PR-LB}. In Section~\ref{ss:ub34}, we show that the optimal constant is $1$ in dimension $2$ and $\frac{4}{3}$ in dimension 3 (i.e., $c_2=1$ and $c_3=4/3$), and also that $c_4\leq 2$. 
 In Section~\ref{ss:improved}, we improve inequality (\ref{eqBM3}) in the special case where $A$ is an ellipsoid, $B_1$ is a zonoid, and $B_2$ is any convex body: in this case, the optimal constant is $1$. This result partially answers a question of Courtade, who asked (motivated by an analogous inequality in information theory) if $|A+B_1+B_2|\,|A|\le |A+B_1|\,|A+B_2|$ holds when $A$ is the Euclidean ball and $B_1, B_2$ are arbitrary convex bodies. 
Finally, in Section~\ref{ss:compact}, we prove that  (\ref{eqBM3}) cannot possibly hold in the more general setting of compact sets with any absolute constant, which signifies a sharp difference between the proof of this inequality compared with the tools used by Ruzsa in  \cite{Ruz97}.
%in Theorem~\ref{th:r3}, we present an optional constant in $\RL^3$; in Theorem~\ref{th:r4}, we present an improved estimate for the constant in $\RL^4$; and we end the section with a lower bound on this constant.  

The last section of the paper is dedicated to  questions surrounding Ruzsa's triangle inequality: if $A, B,$ and $C$ are finite subsets of an abelian group, then
$ \#(A)\#(B-C)\leq \#(A-B)\#(A-C)$. The inequality is also known to be true for volume of compact sets in $\RL^n$: $|A|\,|B-C|\le |A-B|\,|A-C|$. We investigate the best constant $c$ such that the inequality 
\begin{equation}\label{triang3}
|A|\,|A+B+C|\leq c |A-B|\,|A-C|.
\end{equation}
is true for all convex sets $A,B$ and $C$ in $\RL^n$. For example, in the plane, we observe that it holds with the sharp constant $c=\frac{3}{2}$.
%which is a sharp inequality combining the Pl\"unnecke-Ruzsa inequality for convex sets with a Rogers-Shephard type inequality due to Litvak \cite{Sch14:book}. 
Again, it is interesting to note that (\ref{triang3}) is different from Ruzsa's triangle inequality, and it is not true, with any absolute constant $c$, if one omits the assumption of convexity.

% In Section~\ref{sec:sumdiff} we explore inequalities relating the volumes of $A+B$ and $A-B$ for convex bodies $A, B$.

In a companion paper \cite{FMMZ22}, we explore the question of reducing the constant in the Pl\"unnecke-Ruzsa inequality for volumes from $\varphi^{n}$, when we restrict attention to the subclass of convex bodies known as zonoids. In another companion paper \cite{FLMZ22}, we explore measure-theoretic extensions of the preceding results for convex bodies, in the category of $\log$-concave and in particular Gaussian measures.

We also mention that there are probabilistic or entropic analogs of many of the inequalities in this paper. For example, the afore-mentioned observation due to \cite{BM12:jfa}, that a Pl\"unnecke-Ruzsa inequality for convex bodies holds with a constant $3^n$, emerges as a consequence of R\'enyi entropy comparisons for convex measures on the one hand, and the submodularity of entropy of convolutions on the other. The submodularity of entropy of convolutions refers
to the inequality $h(X)+h(X+Y+Z)\leq h(X+Y)+h(X+Z)$, where $h$ denotes entropy, and $X, Y, Z$ are independent $\RL^n$-valued random variables, and may be thought of as an entropic analogue of the Pl\"unnecke-Ruzsa inequality. This latter inequality was obtained in \cite{Mad08:itw} as part of an attempt to develop an additive combinatorics of probability measures where cardinality or volume is replaced by entropy. A number of works have explored this avenue, starting with
%\cite{MMT10:itw}
\cite{Ruz09:1, Tao10, MMT12, ALM17, MWW21} for discrete probability measures on groups (e.g., when the random variables take values in finite groups or the integers),  and 
\cite{Mad08:itw, MK10:isit, MK18, Hoc22} for probability measures on $\RL^n$ and more general locally compact abelian groups.

\vspace{.1in}
\noindent
{\bf Acknowledgments.}
Piotr Nayar and Tomasz Tkocz \cite{NT17} independently obtained upper and lower bounds on the optimal constants in the Pl\"unnecke-Ruzsa inequality for volumes (versions of Theorems~\ref{thm:ub} and \ref{thm:lb}, though with weaker bounds obtained using different methods); we are grateful to them for communicating their work.
%in particular, they independently observed the failure of $\log$-submodularity on the space of convex bodies. 
We are indebted to Ramon Van Handel for pointing to us the original work of W. Fenchel on the local version of Alexandrov's inequality, to Daniel Hug for suggesting that we consider equality cases in Theorem \ref{lmcool2}, and to  Mathieu Meyer and Dylan Langharst  for a number of  valuable discussions and suggestions.

%\subsection{Connection to additive combinatorics}
%
%
%As mentioned above in the proof of Theorem \ref{thm:1d}, the monotonicity of $\Delta(A(k))$ is related to similar questions in additive combinatorics and works of Lev \cite{Lev96} and Gyarmati, Matolcsi and Ruzsa \cite{GMR10}. Let us quote here more precisely their result.
%
%\begin{thm}[Lev; Gyarmati, Matolcsi and Ruzsa]\label{thm:GMR10}
%Let $A_1,\dots, A_k$ be finite non empty sets of integers. Let $S=A_1+\cdots +A_k$, $S_i=A_1+\cdots+A_{i-1}+A_{i+1}+\cdots+A_k$, for $1\le i\le k$. 
%Then 
%$$
%\#(S)\ge \frac{1}{k-1}\sum_{i=1}^k\#(S_i)-\frac{1}{k-1}.
%$$
%\end{thm}
%
%
%
%It was asked by Matolcsi and Ruzsa in Problem 1.7 of \cite{MR10} how to generalize the above theorem to the multidimensional case. They proposed that if $A_1, \dots, A_k$ are finite non empty sets of $\Z^n$ then a proper analog could be 
%$$
%\#(S)\ge\left( \frac{k^{n-1}}{(k-1)^n}-\varepsilon\right)\sum_{i=1}^k\#(S_i)
%$$
%if all sets are sufficiently large.

\section{Preliminaries}
\label{sec:prelim}

\subsection{Mixed Volumes}

In this section, we introduce basic notation and collect essential facts and definition 
from Convex Geometry that are used in the paper. As a general reference on the theory we use \cite{Sch14:book}.
We write $x \cdot y$  for the inner product of vectors $x$ and $y$ in $\RL^n$  and  by $|x|$ the length of a vector $x \in \RL^n$. The closed unit ball in $\RL^n$ is denoted by $B_2^n$, and its boundary by $S^{n-1}$. We will also denote by $e_1, \dots, e_n$ the standard orthonormal basis in $\RL^n$. Moreover, for any set in $A \subset \RL^n$, we denote its boundary by $\partial A$. A convex body is a convex, compact set with nonempty interior. We write $|K|_m$ for the $m$-dimensional Lebesgue measure (volume) of a measurable set $K \subset \RL^n$, where $m = 1, . . . , n$ is the dimension
of the minimal affine space containing $K$, we will often use the  shorten notation  $|K|$ for $n$-dimensional volume.  A polytope which is the Minkowski sum of
finitely many line segments is called a zonotope. Limits of zonotopes in the Hausdorff
metric are called zonoids, see \cite{Sch14:book}, Section 3.2,  for details.
From  \cite[Theorem 5.1.6]{Sch14:book}, for any compact convex sets $K_1,\dots K_r$ in $\RL^n$ and any non-negative numbers $t_1, \dots, t_r$ 
one has
\be\label{eq:mvf}
\left|t_1K_1+\cdots+t_rK_r\right|=
\sum_{i_1,\dots,i_n=1}^rt_{i_1}\cdots t_{i_n}V(K_{i_1},\dots K_{i_n}),
\ee
for some non-negative numbers $V(K_{i_1},\dots K_{i_n})$, which are called the mixed volumes of $K_1,\dots K_r$.
%\be\label{eq:mvf}\vol_n(K+tL)=\sum_{k=0}^n{n\choose k} V(K[n-k],L[k])t^k,\ee for some non negative numbers $V(K[n-k],L[k])$, which are called the mixed volumes of $K$ and $L$. 
One readily sees that the mixed volumes satisfy $V(K,\dots,K)=|K|$, moreover, they satisfy a number of properties which are crucial
for our study (see  \cite{Sch14:book}) including the fact that a mixed volume is   symmetric in its argument; it is multilinear, i.e. for any $\lambda, \mu \ge 0$ we have $V(\lambda K + \mu L, K_2,  \dots, K_n)=\lambda V(K,K_2,  \dots, K_n)+ \mu V(L,K_2, \dots, K_n).$ 
Mixed volume is translation invariant, i.e.  $V(K+a,K_2,  \dots K_n)= V(K,K_2, \dots, K_n),$ for $a \in \RL^n$ and satisfy a monotonicity property, i.e   $V(K,K_2, K_3, \dots, K_n) \le V(L,K_2, K_3, \dots, K_n)$, for $ K \subset L$.
We will also often use a two body version of (\ref{eq:mvf}) -- the Steiner formula:
\be\label{eq:ste}
\left|A+tB\right|=
\sum_{k=0}^n {n \choose k} t^k V(A[n-k],B[k]),
\ee
for any $t>0$ and compact, convex sets $A, B$ in $\RL^n$, where for simplicity we use notation $A[m]$ for a convex set $A$ repeated $m$ times.  Mixed volumes are also very useful for studying  the volume of orthogonal projections of convex bodies. Let $P_H A$ be an orthogonal projection of a convex body $A$ onto $m$ dimensional subspace $H$ of $\RL^n$, then 
\be\label{eq:proj}
|P_HA|_m|U|_{n-m}={{n}\choose{m}}V(A[m], U[n-m]),
\ee
where $U$ is any convex body of volume one in the subspace $H^\perp$ orthogonal to $H$. For example, if we denote by $\theta^\perp = \{x \in \RL^n : x \cdot \theta =0\}$ a hyperplane orthogonal to $\theta\in S^{n-1}$, we obtain
\be\label{eq:proj1}
|P_{\theta^\perp} A|_{n-1}= n V(A[n-1], [0, \theta]).
\ee
Yet another useful formula is connected with computation of surface area and mixed volumes:
\be\label{surface}
|\partial A|= nV(A[n-1], B_2^n),
\ee
where by $|\partial A|$ we denote the surface area of the compact set $A$ in $\RL^n$.
Mixed volumes satisfy a number of extremely useful inequalities. The first one is the Brunn-Minkowski inequality
\be\label{BM}
|A+B|^{1/n}\ge |A|^{1/n}+|B|^{1/n},
\ee
whenever  $A,B$ and $A+B$ are measurable.
%A direct consequences of this inequality are the  Minkowski first inequality
%\be\label{M1}
%V(L, K[n-1])\ge |L|^{1/n}|K|^{(n-1)/n},
%\ee
%and Minkowski second inequality
%\be\label{M2}
%V(L, K[n-1])^2\ge |L|V(L[2],K[n-2]),
%\ee
%for two convex, compact subsets $K$ and $L$ in $\RL^n$. 
%Finally we will need to use
The most powerful inequality for mixed volumes is the 
Alexandrov--Fenchel inequality: 
\be\label{AF}
V(K_1,K_2, K_3, \dots, K_n) \ge \sqrt{V(K_1,K_1, K_3, \dots, K_n)V(K_2,K_2, K_3, \dots, K_n)},
\ee
for any compact convex sets $K_1,\dots K_r$ in $\R^n$. 
We will also use the following classical  local version of Alexandrov-Fenchel's inequality that was proved by W. Fenchel (see \cite{Fen36} and also \cite{Sch14:book}) and further generalized in \cite{FGM03, AFO14, SZ16} 
\begin{equation}\label{alexloc}
|A|V(A[n-2], B, C)\le 2 V(A[n-1],B) V(A[n-1],C),
\end{equation}
 for any convex compact sets $A,B,C$ in $\RL^n$, moreover it was noticed in \cite{SZ16} that (\ref{alexloc}) is true with constant one instead of two in the case when $A$ is a simplex. The inequality turned out to be a part of rich class Bezout inequalities proposed in  \cite{ SZ16, SSZ16}. The core tool of our work is the following inequality of J.~Xiao (Theorem 1.1 and Lemma 3.3 in \cite{Xia19}) 
\begin{align}\label{eq:jxiao}
 |A|V(A[n-j-m], &B[j], C[m]) \nonumber \\&\le \min\left( {n \choose j} , {n \choose m} \right)V(A[n-j],B[j])V(A[n-m],C[m]). 
\end{align}

\subsection{Pl\"unnecke-Ruzsa inequality.}\label{secPR}

Pl\"unnecke-Ruzsa inequalities   (see for example \cite{TV06:book}) is an important class of inequalities in the field of 
additive combinatorics. These were introduced by Pl\"unnecke \cite{Plu70} and
generalized by Ruzsa \cite{Ruz89}, and a simpler proof was given by Petridis \cite{Pet12}; a more recent generalization is proved 
in \cite{GMR08}, and entropic versions are developed in \cite{MMT12}. 
For illustration, the form of Pl\"unnecke's inequality developed in  \cite{Ruz89} states that,
if $A, B_1,\ldots ,B_m$ are finite sets in a commutative group, then there exists an $X \subset A, X \neq \emptyset$, such that
\ben
\#(A)^m\#(X+B_1+\ldots +B_m) \leq \#(X)  \prod_{i=1}^m\#(A + B_i).
\een
In \cite{Ruz97}, Ruzsa generalized the above inequality to the case of compact sets on a locally compact commutative group, written additively, with the  Haar measure. The volume case of this deep theorem is one of our main inspirations: for any compact sets $A,B_1, \ldots ,B_m$ in $\R^n,$ with $|A|>0$ and for every $\varepsilon >0$ there exists a compact set $A' \subset A $ such that 
\begin{equation}\label{eq:ruzvol}
|A|^m|A'+B_1+\ldots +B_m| \le (1+\varepsilon)|A'| \prod_{i=1}^m|A + B_i|.
\end{equation}
It immediately follows that  for any compact sets $A,B_1, \ldots ,B_m$ in $\R^n,$
\begin{equation}\label{eq:ruzvol1}
|A|^{m-1}|B_1+\ldots +B_m| \le \prod_{i=1}^m|A + B_i|.
\end{equation}

\subsection{Submodularity and supermodularity}
\label{sec:smod-prelim}

Let us first recall the notion of a supermodular set function. 

\begin{defn}\label{def:supermod}
A set function $F:2^{[n]}\ra\RL$ is {\it supermodular} if 
\be\label{supmod:defn}
F(\setS\cup\setT)+F(\setS\cap\setT) \geq F(\setS) + F(\setT)
\ee
for all subsets $\setS, \setT$ of $[n]$.
\end{defn}
One says that a set function $F$ is submodular if $-F$ is supermodular. 
Submodularity is closely related to a partial ordering on hypergraphs as we will see below. This relationship is frequently attributed to Bollobas and Leader \cite{BL91} (cf. \cite{BB12}), where they introduced the related notion of ``compressions''.
However, it seems to go back much longer -- it is more or less explicitly discussed in a 1975 paper of Emerson \cite{Eme75}, where he says it is ``well known''.

To present this relationship, let us introduce  some notation. Let $\calM(n,m)$ be the following family of 
(multi)hypergraphs: each consists of non-empty (ordinary) subsets $\setS_i$ of $[n]$,
$\setS_i=\setS_j$ is allowed, and 
$\sum_i |\setS_i|= m$. 
Consider a given multiset 
$\collS = \{\setS_1, \dots, \setS_l\} \in \calM(n,m)$.
The idea is to consider an operation that takes two sets in $\collS$
and replaces them by their union and intersection; however, note that
(i) if $\setS_i$ and $\setS_j$ are nested (i.e., either $\setS_i \subset \setS_j$ or 
$\setS_j \subset \setS_i$), then replacing $(\setS_i,\setS_j)$ by 
$(\setS_i \cap \setS_j ,\setS_i \cup \setS_j)$ does not change $\collS$, 
and
(ii) if  $\setS_i \cap \setS_j = \emptyset$, the empty set may enter the collection,
which would be undesirable.
Thus, take any pair of non-nested sets $\{\setS_i,\setS_j\}\subset \collS$
and let $\collS' = \collS(i,j)$ be obtained from $\collS$ by replacing $\setS_i$ and $\setS_j$ 
by $\setS_i \cap \setS_j$ and $\setS_i \cup \setS_j$, 
keeping only $\setS_i \cup \setS_j$ if $\setS_i \cap \setS_j = \emptyset$. 
$\collS'$ is called an {\it elementary compression} of $\collS$. The result of a sequence of
elementary compressions is called a {\it compression}.

Define a partial order on $\calM(n,m)$ by setting $\calA > \calA'$ if $\calA'$ is a
compression of $\calA$. To check that this is indeed a partial order, 
one needs to rule out the possibility of cycles, which can be done by noting that if $\calA'$ is an elementary compression of $\calA$ then 
\ben
\sum_{\setS\in\calA} |\setS|^2 < \sum_{\setS\in\calA'} |\setS|^2 .
\een

\begin{thm}\label{thm:compr}
Suppose $F$ is a supermodular function on the ground set $[n]$.
Let $\calA$ and $\calB$ be finite multisets of subsets of $[n]$, with $\calA > \calB$.
Then
\ben
\sum_{\setS\in\calA} F(\setS) \leq \sum_{\setT\in\calB} F(\setT) .
\een
\end{thm}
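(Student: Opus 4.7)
The plan is to reduce the statement to a single elementary compression step, and then dispatch that step by a direct appeal to the supermodularity inequality.

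By the definition of the partial order on $\calM(n,m)$, the relation $\calA > \calB$ asserts the existence of a finite chain of elementary compressions $\calA = \calA_0 > \calA_1 > \cdots > \calA_k = \calB$. Since the desired inequality $\sum_{\setS\in\calA}F(\setS) \leq \sum_{\setT\in\calB}F(\setT)$ is transitive along such a chain, a straightforward induction on $k$ reduces matters to the case $k=1$, i.e.\ to the case where $\calB$ is obtained from $\calA$ by a single elementary compression.

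In this setting I would write $\calA = \calC \sqcup \{\setS_i,\setS_j\}$, where $\{\setS_i,\setS_j\}$ is the non-nested pair being compressed and $\calC$ is the remaining (multi)set. The contributions of the elements of $\calC$ to the two sums coincide and cancel, so it suffices to compare $F(\setS_i)+F(\setS_j)$ with the $F$-contribution of the replacement sets. When $\setS_i\cap\setS_j\neq\emptyset$, the replacement sets are exactly $\setS_i\cap\setS_j$ and $\setS_i\cup\setS_j$, and the required inequality $F(\setS_i)+F(\setS_j)\leq F(\setS_i\cap\setS_j)+F(\setS_i\cup\setS_j)$ is precisely the supermodularity condition \eqref{e:supmod:defn} applied with $\setS=\setS_i$, $\setT=\setS_j$.

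The only remaining case is $\setS_i\cap\setS_j=\emptyset$, which is the main (if mild) obstacle to a one-line proof. Here only $\setS_i\cup\setS_j$ is inserted into $\calB$, and the inequality to be established is $F(\setS_i)+F(\setS_j)\leq F(\setS_i\cup\setS_j)$. Supermodularity yields $F(\setS_i)+F(\setS_j)\leq F(\emptyset)+F(\setS_i\cup\setS_j)$, so one needs the normalization $F(\emptyset)\leq 0$. This is automatic in the applications envisaged in the paper: for the volume functional $v(\setS)=\bigl|\sum_{i\in\setS}B_i\bigr|$ the empty Minkowski sum is a singleton and hence $v(\emptyset)=0$. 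The theorem can therefore be stated under this harmless convention on $F$; alternatively, one restricts attention to compressions in which no empty intersection is ever produced, and the argument above is otherwise unchanged.
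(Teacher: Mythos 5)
Your proof is correct and takes essentially the same route as the paper's: reduce to a single elementary compression and invoke supermodularity. The paper's own proof is a one-liner---``the statement is immediate by definition, and transitivity of the partial order gives the full statement''---and does not stop to examine the empty-intersection case.

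In fact, you have been more careful than the paper, and have uncovered a genuine (if small) gap in the statement of the theorem itself. When $\setS_i\cap\setS_j=\emptyset$ the compression drops the empty set, so the inequality needed for the step is $F(\setS_i)+F(\setS_j)\le F(\setS_i\cup\setS_j)$, and supermodularity alone only gives $F(\setS_i)+F(\setS_j)\le F(\emptyset)+F(\setS_i\cup\setS_j)$. Without the normalization $F(\emptyset)\le 0$, the theorem as stated is actually false: take $F\equiv c$ with $c>0$, which is supermodular, and compress $\{\{1\},\{2\}\}$ to $\{\{1,2\}\}$; then $2c\not\le c$. So the hypothesis $F(\emptyset)\le 0$ (or a restriction to compressions that never produce an empty intersection) is required and should be stated. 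As you note, this is harmless for the paper's applications, since $\bar v(\emptyset)=V(\{0\}[n-l],C_1,\ldots,C_l)=0$, but the theorem as written should be qualified.
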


\begin{proof}
When $\calB$ is an {\it elementary} compression of $\calA$, the statement is immediate 
by definition, and transitivity of the partial order gives the full statement. 
\end{proof}

Note that for every multiset $\calA \in \calM(n,m)$ there is a unique minimal multiset
$\calA^{\#}$ dominated by $\calA$, i.e. $\calA^{\#}<\calA,$  consisting of the sets
$\setS^{\#}_j = \{i \in [n] : i \text{ lies in at least } j \text{ of the sets } \setS \in \calA\}$.
%Equivalently, $\calA^{\#}$ is the unique multiset that is totally ordered by inclusion
%and has the same multiset union as $\calA$. 
%%Young tableaux depiction?
%In particular, if $\calA$ is an $r$-regular hypergraph, then $\calA^{\#}$ is $r$ copies of $[n]$.
Thus a particularly nice instance of Theorem~\ref{thm:compr} is for the special case of $\calB=\calA^{\#}$ (we refer to  \cite{BB12} page 132 for further discussion).
We also have a notion of supermodularity on the positive orthant of the Euclidean space.

\begin{defn}\label{def:supermod-Rmaxmin}
A function $f:\RL_{+}^{n} \ra\RL$ is supermodular if
\ben
 f(x\vee y) + f(x \wedge y) \ge f(x) + f(y) 
\een
for any $x,y \in \mathbb{R}_+^n$, where $x \vee y$ denotes the componentwise maximum of $x$ and 
$y$ and $x \wedge y$ denotes the componentwise 
minimum of $x$ and $y$.
\end{defn}
We note that Definition \ref{def:supermod-Rmaxmin} can be viewed as an extension of Definition~\ref{def:supermod} if one consider set functions on $2^{[n]}$ as a function on $\{0;1\}^n$. Indeed, if $f:\RL_{+}^{n} \ra\RL$ is supermodular then we define the function $F:2^{[n]}\to\RL$, for $s\subset[n]$, by $F(s)=f(e(s))$, where $e(s)_i=1$ if $i\in s$ and $e(s)_i=0$ if $i\notin s$. Then, the set function $F$ is supermodular:

%The next very simple lemma connects supermodularity for functions defined on $\RL_{+}^n$ to supermodularity for set functions. For a set $s\subset [n]$, we use $e(s)$ to denote the vector in $\RL^n$ such that for each $i\in [n]$, the $i$-th coordinate of $e(s)$ is 1 if and only if $i\in s$ (or in other words, $e_i(s):= \one_s (i)$).

\begin{lem}\label{lem:set-rn}
If $f:\RL_{+}^n\ra\RL$ is supermodular, and we set $F(s):=f(e(s))$ for each $s\subset [n]$,
then $F$ is a supermodular set function.
\end{lem}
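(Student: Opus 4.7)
The plan is to reduce the statement to a direct application of the supermodularity hypothesis on $f$ via the natural identification of subsets of $[n]$ with $\{0,1\}$-vectors in $\mathbb{R}_+^n$. The entire content of the lemma lies in a trivial compatibility between the Boolean operations $\cup,\cap$ on subsets and the lattice operations $\vee,\wedge$ on $\mathbb{R}_+^n$, once restricted to indicator vectors.

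First I would verify the key elementary identities: for any $s,t\subset[n]$,
\[
e(s\cup t)=e(s)\vee e(t)\qquad\text{and}\qquad e(s\cap t)=e(s)\wedge e(t).
\]
Both are checked coordinatewise: since $e(s)_i,e(t)_i\in\{0,1\}$, one has $\max(e(s)_i,e(t)_i)=1$ iff $i\in s$ or $i\in t$, i.e.\ iff $i\in s\cup t$, and symmetrically for the minimum.

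Next I would apply the supermodularity of $f$ (Definition~\ref{def:supermod-Rmaxmin}) to the points $x=e(s)$ and $y=e(t)$ in $\mathbb{R}_+^n$, yielding
\[
f(e(s)\vee e(t))+f(e(s)\wedge e(t))\ge f(e(s))+f(e(t)).
\]
Substituting the two identities above and using the defining relation $F(\cdot)=f(e(\cdot))$, this becomes $F(s\cup t)+F(s\cap t)\ge F(s)+F(t)$, which is precisely the supermodularity of $F$ in the sense of Definition~\ref{def:supermod}.

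There is essentially no obstacle here; the lemma is a sanity check confirming that Definition~\ref{def:supermod-Rmaxmin} genuinely extends Definition~\ref{def:supermod}, so the only thing to be careful about is writing out the indicator-vector identities explicitly so that the reader sees why $\cup,\cap$ correspond to $\vee,\wedge$ on $\{0,1\}^n$.
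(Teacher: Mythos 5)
Your proof is correct and follows essentially the same route as the paper: both arguments rest on the coordinatewise identities $e(s\cup t)=e(s)\vee e(t)$ and $e(s\cap t)=e(s)\wedge e(t)$, followed by a direct application of the supermodularity of $f$ at $x=e(s)$, $y=e(t)$. The only difference is presentational; you spell out the indicator-vector identities a bit more explicitly, while the paper states them in-line within a chain of equalities.
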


\begin{proof}
Observe that
\ben\begin{split}
F(s\cup t) + F(s\cap t) 
&= f(e(s\cup t)) + f(e(s\cap t)) \\
&= f(e(s) \vee e(t)) + f(e(s)\wedge e(t)) \\
&\geq f(e(s))+f(e(t)) \\
&= F(s) + F(t) .
\end{split}\een
\end{proof}
The fact that supermodular functions are closely related to functions 
with increasing differences is classical (see, e.g., \cite{MG19} or  \cite{Top98:book}, which describes
more general results involving arbitrary lattices). %but we give its proof for completeness.  
We will denote by $\partial_i f$ the partial derivative of function $f$ with the respect to the $i$'s coordinate and by $\partial^m_{i_1, \dots, i_m}$ the mixed derivative with respect to coordinates $i_1, \dots i_m$.

 \begin{prop}\label{prop:diff}
 Suppose a function $f:\RL_+^n\ra\RL$ is in $C^2$, i.e., it is twice-differentiable with
 a continuous Hessian matrix. Then $f$ is supermodular if and only if
 \ben
 \partial_{i,j}^2 f(x)\geq 0
 \een
 for every distinct $i, j \in [n]$, and for any $x\in \RL_+^n$.
 \end{prop}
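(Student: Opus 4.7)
The plan is to prove both directions by reducing to a two-coordinate picture and then using the elementary fact that the mixed second partial of a $C^2$ function is non-negative if and only if the function has non-negative second mixed difference.

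For the forward direction ($f$ supermodular implies $\partial_{i,j}^2 f \geq 0$), I would fix a point $x\in\RL_+^n$ and distinct indices $i\neq j$, set $y = x + se_i$ and $z = x + te_j$ for small $s,t>0$, and apply the defining inequality. Since $y\vee z = x + se_i + te_j$ and $y\wedge z = x$, the hypothesis gives
\[
f(x + se_i + te_j) - f(x + se_i) - f(x + te_j) + f(x) \geq 0.
\]
Dividing by $st$ and letting $s,t\downarrow 0$, the $C^2$ regularity of $f$ yields $\partial_{i,j}^2 f(x)\geq 0$.

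For the reverse direction, given $x,y\in\RL_+^n$, set $u = x\wedge y$ and $v = x\vee y$, and partition $[n] = A\sqcup B$ with $A=\{k: x_k\geq y_k\}$ and $B=\{k: x_k<y_k\}$, so that $x = u + \sum_{k\in A}(v_k-u_k)e_k$ and $y = u + \sum_{k\in B}(v_k-u_k)e_k$ (ties can be placed in either set). Define $g:[0,1]^2\to\RL$ by
\[
g(\sigma,\tau) = f\!\left(u + \sigma\sum_{k\in A}(v_k-u_k)e_k + \tau\sum_{k\in B}(v_k-u_k)e_k\right),
\]
so that $g(1,1)=f(v)$, $g(0,0)=f(u)$, $g(1,0)=f(x)$, $g(0,1)=f(y)$. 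Differentiating, one computes
\[
\partial^2_{\sigma\tau} g(\sigma,\tau) = \sum_{i\in A}\sum_{j\in B}(v_i-u_i)(v_j-u_j)\,\partial_{i,j}^2 f(\,\cdots),
\]
which is non-negative by hypothesis since $A$ and $B$ are disjoint (so $i\neq j$ always) and each factor $v_k-u_k\geq 0$. The desired supermodularity $f(v)+f(u)\geq f(x)+f(y)$ then follows from
\[
g(1,1) - g(1,0) - g(0,1) + g(0,0) = \int_0^1\!\!\int_0^1 \partial^2_{\sigma\tau} g(\sigma,\tau)\,d\sigma\, d\tau \geq 0.
\]

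There is no real obstacle here; the only point that requires a bit of care is the reduction from a general pair $(x,y)$ to the two-parameter path that moves independently along the ``$A$'' and ``$B$'' coordinates, which is precisely what makes the mixed partials appear with indices from disjoint sets and allows the hypothesis (which is only for $i\neq j$) to apply.
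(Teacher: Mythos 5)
Your proof is correct, and it takes a genuinely different route from the paper's. The paper does not prove Proposition~\ref{prop:diff} on its own; it subsumes it as the base case $m=2$ of Theorem~\ref{thm:diff}, and the argument there works through the auxiliary functions $g_z(x)=f(x\vee z)-f(x)$: using the identity \eqref{eq:induction}, supermodularity of $f$ is reduced to monotonicity (i.e.\ $1$-supermodularity) of $g_z$ on the coordinate subspace $H_z$, and then taking $z=z_je_j$ extracts the sign of $\partial^2_{i,j}f$ (with the converse run by a directional-derivative argument applied to $\partial_i f$). Your forward direction (second mixed difference divided by $st$, then $s,t\downarrow 0$) is essentially equivalent but stated more plainly. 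Your reverse direction is the nicer departure: instead of the paper's sequential differencing you parametrize the ``rectangle'' with vertices $u=x\wedge y$, $x$, $y$, $v=x\vee y$ by a bilinear path $g(\sigma,\tau)$ along the disjoint coordinate blocks $A$ and $B$, compute that $\partial^2_{\sigma\tau}g$ is a nonnegative combination of mixed partials $\partial^2_{i,j}f$ with $i\in A$, $j\in B$ (hence always $i\neq j$), and recover supermodularity from $\int_0^1\!\int_0^1\partial^2_{\sigma\tau}g=g(1,1)-g(1,0)-g(0,1)+g(0,0)$. This direct Fubini-type argument is self-contained and avoids the $g_z$/$H_z$ machinery entirely; what the paper's formulation buys in exchange is that it sets up the induction that handles general $m$-supermodularity in one pass, which is why the authors phrase the base case the way they do. For the $m=2$ statement taken in isolation, your version is cleaner.
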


 We will prove Proposition \ref{prop:diff} as part of a more general statement on the mixed derivatives of the supermodular functions of higher order (Theorem \ref{thm:diff} below).

\section{Higher order supermodularity of mixed volumes}
\label{sec:super}

\subsection{Local characterization of higher order supermodularity}

We now present analogues of the above development for higher-order supermodularity.
Let us notice that a set function $F:2^{[n]}\to\RL$ is supermodular if and only if for any $s_0,s_1, s_2\in 2^{[n]}$ with $s_1\cap s_2=\emptyset$ one has 
\[
F(s_0\cup s_1)+F(s_0\cup s_2)\le F(s_0\cup s_1\cup s_2)+F(s_0).
\]
Generalizing this property, Foldes  and Hammer \cite{FH05} defined the notion of higher order supermodularity. In this section, we will adapt their definition and  study the following property:
\begin{defn}\label{eq:hsup}
Let $1\le m\le n$. A function $F:2^{[n]}\to\RL$ is $m$-supermodular if for any $s_0\in 2^{[n]}$ and for any mutually disjoint $s_1,\dots, s_m\in2^{[n]}$ one has 
\[
\sum_{I\subset[m]}(-1)^{m-|I|}F\left(s_0\cup\bigcup_{i\in I}s_i\right)\ge0.
\]
\end{defn}
Note that for $m=2$ in the above definition, we recover a supermodular set function.
We also introduce the notion of higher-order supermodularity for functions defined on the positive orthant of a Euclidean space.
\begin{defn}\label{def:supermod-Rm}
Let $1\le m\le n$. A function $f:\RL_{+}^{n} \ra\RL$ is $m$-supermodular if
\ben
\sum_{I\subset[m]}(-1)^{m-|I|}f\left(x_0\vee\bigvee_{i\in I}x_i\right)\ge0,
\een
for any $x_0$ and any $x_1,\dots,x_m \in \mathbb{R}_+^n$, with mutually disjoint supports, that is such that $x_i\wedge x_j=0$, for any $1\le i<j\le m$.
\end{defn}

\begin{rem}\label{rk:setsuper}
Notice that, as in Lemma \ref{lem:set-rn}, if $f:\RL_+^n\to\RL$ is $m$-supermodular then $F:2^{[n]}\to\RL$ defined by $F(s)=f(e(s))$ is $m$-supermodular. 
\end{rem}

For $m=1$ in the above definition, we obtain that $f$ is $1$-supermodular if and only if it is non-decreasing in each coordinate.
For $m=2$, we recover a supermodular function on the orthant as we prove in the following lemma.

\begin{lem}
Let $f:\RL_+^n\ra\RL$. Then $f$ is supermodular if and only if for any $x,y,z\in\RL_+^n$ such that $y\wedge z=0$ one has 
\be\label{eq:disj}
f(x\vee y\vee z)+f(x)\ge f(x\vee y)+f(x\vee z).
\ee
\end{lem}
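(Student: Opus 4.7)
The plan is to prove the two implications separately. Both directions come down to a coordinatewise case analysis on the lattice $(\RL_+^n,\vee,\wedge)$, using the key observation that $y\wedge z=0$ means that, at each coordinate $i\in[n]$, at least one of $y_i,z_i$ is zero. Neither direction should be technically hard; the real content is choosing the right pair of elements to plug into the hypothesis.

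\textbf{Forward direction ($\Rightarrow$).} Assuming $f$ is supermodular, I will apply the defining inequality $f(u\vee v)+f(u\wedge v)\ge f(u)+f(v)$ to the specific choice $u=x\vee y$ and $v=x\vee z$. Obviously $u\vee v=x\vee y\vee z$, so the task reduces to verifying that $u\wedge v=x$ whenever $y\wedge z=0$. Fix a coordinate $i$: by the disjointness hypothesis either $y_i=0$ or $z_i=0$, and in either case a direct computation gives $\min(\max(x_i,y_i),\max(x_i,z_i))=x_i$. Thus $u\wedge v=x$, and supermodularity yields the desired \eqref{eq:disj}.

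\textbf{Reverse direction ($\Leftarrow$).} Given arbitrary $u,v\in\RL_+^n$, I want to produce $x,y,z\in\RL_+^n$ with $y\wedge z=0$ such that $x=u\wedge v$, $x\vee y=u$, $x\vee z=v$, and $x\vee y\vee z=u\vee v$. Then \eqref{eq:disj} will translate into supermodularity directly. The construction is coordinatewise: set $x_i=\min(u_i,v_i)$, and define
\[
y_i=\begin{cases}u_i & \text{if }u_i>v_i,\\ 0 & \text{otherwise,}\end{cases}\qquad
z_i=\begin{cases}v_i & \text{if }v_i>u_i,\\ 0 & \text{otherwise.}\end{cases}
\]
A one-line check in each of the three cases $u_i>v_i$, $u_i<v_i$, $u_i=v_i$ verifies all four identities above, and the definitions ensure $y_i z_i=0$ for every $i$, i.e.\ $y\wedge z=0$. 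Substituting into \eqref{eq:disj} yields $f(u\vee v)+f(u\wedge v)\ge f(u)+f(v)$, which is supermodularity.

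The only mild subtlety is bookkeeping the three coordinatewise cases consistently; there is no genuine obstacle, and no regularity assumption on $f$ is required since the argument is purely algebraic on the lattice.
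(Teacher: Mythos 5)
Your proof is correct and follows essentially the same route as the paper: both directions hinge on the same substitution $u=x\vee y$, $v=x\vee z$ (with $u\wedge v=x$ by disjointness) in one direction, and on the same coordinatewise construction of $x=u\wedge v$, $y$, $z$ from strict inequalities in the other. The paper's proof is a tersely worded version of exactly this argument.
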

\begin{proof}
Suppose $f$ is supermodular and one has $x,y,z\in\RL_+^n$ such that $y\wedge z=0$. Then, we set $a=x\vee y$ and $b=x\vee z,$ then, $a\vee b=x\vee y\vee z$ and $a\wedge b=x$, since $y\wedge z=0$. Thus, 
\[
f(x\vee y\vee z)+f(x)- f(x\vee y)-f(x\vee z)=f(a\wedge b)+ f(a\vee b)-f(a)-f(b)\ge0.
\]
Now assume that $f$ satisfies (\ref{eq:disj}) and let $a,b\in\RL_+^n$. We set $x=a\wedge b$ and we define $y$ by putting $y_i=a_i$ for $i$ such that $b_i<a_i$ and $y_i=0$ otherwise. In the same way, we set $z_i=b_i$, for $i$ such that $a_i<b_i$ and $z_i=0$ otherwise. Then $x\vee y=a$, $x\vee z=b$ and $x\vee y\vee z=a\vee b$, hence, we conclude similarly.
\end{proof}

The next theorem generalizes Proposition \ref{prop:diff} to higher order supermodularity. 
\begin{thm}\label{thm:diff}
Let $f:\RL_+^n\ra\RL$ be a $C^m$ function. Then $f$ is $m$-supermodular if and only if
%\ben
%\frac{\partial ^m f(x)}{\partial x_{i_1}\cdots \partial x_{i_m}}\geq 0
%\een
\ben
\partial^m_{i_1, \dots, i_m} f(x)\geq 0
\een
for every distinct $i_1,\dots, i_m \in [n]$, and for any $x\in \RL_+^n$.
\end{thm}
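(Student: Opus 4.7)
The plan is to prove both directions by reducing the $m$-supermodularity inequality to a multiple integral of the mixed partial derivative via the fundamental theorem of calculus, using the disjointness of supports to keep the coordinate directions well-separated.

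For the ``if'' direction, fix $x_0 \in \RL_+^n$ and $x_1,\dots,x_m \in \RL_+^n$ with pairwise disjoint supports $T_j := \mathrm{supp}(x_j)$. For each $k \in T_j$, set $\alpha_k := (x_j)_k - \min((x_0)_k,(x_j)_k) \ge 0$, so that for any $I \subset [m]$,
\[
\Bigl(x_0 \vee \bigvee_{i \in I} x_i\Bigr)_k = (x_0)_k + \alpha_k \mathbf{1}[j \in I] \qquad\text{for } k \in T_j,
\]
and the remaining coordinates equal $(x_0)_k$. I would then define $g:[0,1]^m \to \RL$ by
\[
g(t_1,\dots,t_m) = f\bigl(y(t)\bigr), \quad y(t)_k = (x_0)_k + t_j \alpha_k \text{ if } k \in T_j,\ y(t)_k=(x_0)_k \text{ otherwise,}
\]
so $g(\mathbf{1}_I)$ equals the $I$-th term in the supermodularity alternating sum. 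A standard telescoping argument gives
\[
\sum_{I\subset [m]} (-1)^{m-|I|} g(\mathbf{1}_I) = \int_{[0,1]^m} \partial_1 \cdots \partial_m g(t)\, dt,
\]
and by the chain rule together with the disjointness of the $T_j$'s,
\[
\partial_1 \cdots \partial_m g(t) = \sum_{k_1 \in T_1} \cdots \sum_{k_m \in T_m} \alpha_{k_1}\cdots \alpha_{k_m}\, \partial^m_{k_1,\dots,k_m} f(y(t)).
\]
Since the $k_j$ lie in pairwise disjoint sets they are automatically distinct, so each mixed partial is non-negative by hypothesis, and $\alpha_{k_j} \ge 0$; the integrand is non-negative, yielding the claim.

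For the ``only if'' direction, fix $x \in \RL_+^n$, distinct indices $i_1,\dots,i_m \in [n]$, and $h > 0$. Take $x_0 = x$ and $x_j = (x_{i_j}+h)\, e_{i_j}$ for $j=1,\dots,m$; these have pairwise disjoint supports $\{i_j\}$ and lie in $\RL_+^n$. Then $x_0 \vee \bigvee_{j \in I} x_j = x + h\sum_{j \in I} e_{i_j}$, so $m$-supermodularity gives
\[
\sum_{I \subset [m]} (-1)^{m-|I|} f\Bigl(x + h\sum_{j \in I} e_{i_j}\Bigr) \ge 0.
\]
Dividing by $h^m$ and letting $h \to 0^+$, the left-hand side converges (by $C^m$ regularity and a Taylor expansion argument) to $\partial^m_{i_1,\dots,i_m} f(x)$, giving non-negativity at $x$. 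At boundary points of $\RL_+^n$ the one-sided limit suffices.

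The one genuine obstacle is the interaction between the $\vee$ operation and the parameterization in the ``if'' direction: without disjoint supports, the map $t \mapsto x_0 \vee \bigvee_i t_i x_i$ would be non-smooth (piecewise linear with ``kinks'' where coordinates of different $x_i$ cross), and the chain-rule step would fail. The disjointness hypothesis is precisely what allows us to linearize in each block $T_j$ separately and extract the mixed partial cleanly. Everything else is essentially the standard translation between finite differences and derivatives.
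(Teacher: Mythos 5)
Your proof is correct, and it takes a genuinely different route from the paper. The paper argues by induction on $m$: it first establishes the identity
\[
\sum_{I\subset[m]}(-1)^{m-|I|}f\Bigl(x_0\vee\bigvee_{i\in I}x_i\Bigr)
=\sum_{I\subset[m-1]}(-1)^{m-1-|I|}g_{x_m}\Bigl(x_0\vee\bigvee_{i\in I}x_i\Bigr),
\quad g_z(x):=f(x\vee z)-f(x),
\]
which shows $f$ is $m$-supermodular iff $g_z$ is $(m-1)$-supermodular as a function on the coordinate subspace $H_z=\{x:\ x_iz_i=0\}$; it then handles $m=2$ by a direct monotonicity argument and inducts. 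You instead bypass the induction entirely: the affine reparameterization $t\mapsto y(t)$, the cube identity $\sum_I(-1)^{m-|I|}g(\mathbf{1}_I)=\int_{[0,1]^m}\partial_1\cdots\partial_m g\,dt$, and a single application of the chain rule (where the disjointness of the $T_j$ guarantees the emerging indices $k_1,\dots,k_m$ are automatically distinct) give both directions in one stroke. The forward-difference quotient in your ``only if'' direction corresponds in spirit to the paper's base case, but your use of the integral representation makes the ``if'' direction shorter and more transparent. Your approach is also more closely aligned with the copula-theoretic remark the authors make after the proof (that $\partial^m_{1,\dots,m}C\ge 0$ is equivalent to non-negativity of $C$-volumes of boxes), which in some sense is exactly the cube identity you invoke. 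One small point worth making explicit: you should note that $y(t)\in\RL_+^n$ for all $t\in[0,1]^m$ (immediate since $(x_0)_k\ge 0$ and $\alpha_k\ge 0$), so that $f$ is indeed defined along the whole parameterization; with that observation included, the argument is complete.
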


\begin{proof}
Let $x_0\in \mathbb{R}_+^n$ and $x_1,\dots,x_m \in \mathbb{R}_+^n$, with mutually disjoint supports.
\begin{equation}\label{eq:induction}
\begin{split}
\sum_{I\subset[m]}&(-1)^{m-|I|}f\left(x_0\vee\bigvee_{i\in I}x_i\right)\\
=&\sum_{I\subset[m-1]}(-1)^{m-|I|-1}f\left(x_0\vee\bigvee_{i\in I}x_i \vee x_m\right)
+\sum_{I\subset[m-1]}(-1)^{m-|I|}f\left(x_0\vee\bigvee_{i\in I}x_i\right)
\\
= & \sum_{I\subset[m-1]}(-1)^{m-1-|I|}\left[f\left(x_0\vee\bigvee_{i\in I}x_i \vee x_m\right) -f\left(x_0\vee\bigvee_{i\in I}x_i\right) \right]\\
=& \sum_{I\subset[m-1]}(-1)^{m-1-|I|}g_{x_m}\left(x_0\vee\bigvee_{i\in I}x_i ) \right), 
\end{split}
\end{equation}
where $g_{z}(x)=f(x \vee z)-f(x),$ for any $x,z\in \RL_+^n$.  Thus $f$ is $m$-supermodular if and only if $x \mapsto g_{z}(x)$ is $(m-1)$-supermodular for any $z\in\RL_+^n$ as a function on the coordinate subspace $H_z=\{x \in \RL_+^n ; x_i z_i=0, \forall\ i =1, \dots, n\}$. 

Now we are ready to prove the theorem  for the case $m=2$. In this case, the above equivalence (\ref{eq:induction}) gives us that $f$ is supermodular if and only if $x\mapsto g_z(x)$ is $1$-supermodular for any $z\in\RL_+^n$ as a function of $x\in H_z$ and thus $g_z$ is non-decreasing in each coordinate direction of $H_z$, i.e. for each coordinate index $i$ such that $z_i=0$. Thus, assuming differentiability, this is equivalent $\partial_i g_z \ge 0 $ for all $i$ which is a coordinate direction in $H_z$. Taking $z=z_je_j$, $z_j > 0$, we get $\partial_i g_{z_je_j} \ge 0$ for all $i\not=j$. Thus $\partial_i f (x \vee z_je_j)-\partial_if(x) \ge 0,$ and finally $\partial^2_{i,j} f(x) \ge 0$. Reciprocally, assuming that $\partial^2_{i,j} f(x) \ge 0$ for all $i\neq j$ and all $x$, we get $D_y \partial_i f (x) \ge 0$ for  $y\in \RL_+^n$ such that $y_i=0$, where  by $D_y f= y\cdot \nabla f $ we denote the directional derivative with respect to vector $y\in \RL^n$. Thus $\partial_i f(x+y)- \partial_i f(x) \ge 0$ for all $y\in \RL_+^n$ with $y_i=0$. Thus, considering $y\in \RL^n_+$, such that  $y_i=0$ and $y_j=x_j\vee z_j - x_j,$ $j\not=i$, we get $\partial_ig(z)=\partial_if(x \vee z)-\partial_if(x) \ge 0$ for all $i$ not in the support of $z$. 

We will finish the proof applying an  induction argument. Assume that the statement of the theorem is true for $m-1$, for some $m\ge 3$. Let  $f:\RL_+^n\ra\RL$ be a $C^m$  $m$-supermodular function. Then applying (\ref{eq:induction})  we get that $x\mapsto g_z(x)$ is $m-1$-supermodular for any $z\in\RL_+^n$ as a function of $x\in H_z$, which, applying inductive assumption, gives us 
\ben
\partial_{i_1}\cdots  \partial_{i_{m-1}} [f(x \vee z)-f(x)]\geq 0
\een
for every distinct $i_1,\dots, i_{m-1}$, coordinates of  $H_z$, applying it to $z=z_{i_m} e_{i_m}$ we get
\ben
\partial_{i_1}\cdots  \partial_{i_{m}} f(x)\geq 0.
\een
Now assume the partial derivative  condition of the theorem. Then for every $z\in \RL_+^n$ and $i_1,\dots, i_{m-1}$, coordinates of  $H_z$, we have 
\ben
\partial_{i_1}\cdots  \partial_{i_{m-1}} g_z(x)=\partial_{i_1}\cdots  \partial_{i_{m-1}} f(x \vee z)- \partial_{i_1}\cdots  \partial_{i_{m-1}}f(x),
\een
but $\partial_{i_1}\cdots \partial_{i_{m-1}}\partial_{i_{m}}f(x) \ge 0$ for every $i_m\not= i_k,$ for $k=1, \dots, m-1$ and thus for every $i_m$ for which $z_{i_m}\not = 0$. So $\partial_{i_1}\cdots \partial_{i_{m-1}}f(x)$ is an non-decreasing function in each coordinate $i_m$ for which $z_{i_m}\not = 0$:
$$
\partial_{i_1}\cdots  \partial_{i_{m-1}} f(x \vee z)- \partial_{i_1}\cdots  \partial_{i_{m-1}}f(x) \ge 0
$$
and, applying inductive assumption, we get that  $x\mapsto g_z(x)$ is $(m-1)$-supermodular for any $z\in\RL_+^n$ as a function of $x\in H_z$, which, finishes the proof with the help of  (\ref{eq:induction}).
\end{proof}

As an example, which will help to understand the connection of supermodularity to Minkowski sum of sets, let $\varphi: \RL_+\to\RL$ be a convex function. Then for every $a_0,a_1,a_2\in\RL_+$ one has 
\ben
\varphi(a_0+a_1)+\varphi(a_0+a_2)\le \varphi(a_0+a_1+a_2)+\varphi(a_0).
\een
This property can be seen again as the supermodularity of the function $\Phi:2^{[2]}\to\RL$ defined by $\Phi(s)=\varphi(a_0+e(s)_1a_1+e(s)_2a_2)$, for any $s\in 2^{[2]}$, where $e(s)$ is defined above Lemma~\ref{lem:set-rn}.

We remark in passing that the positivity of mixed partial derivatives and its global manifestation also arises in the theory of copulas in probability (see, e.g., \cite{Car09}). In particular, it is well known there that for smooth functions $C:[0,1]^m\ra [0,1]$, the condition $\partial^{m}_{1,2,\ldots,m} C\geq 0$
is equivalent to the condition that
$\sum_{z\in \{x_i, y_i\}^m} (-1)^{N(z)} C(z)\geq 0$ for every box $\prod_{i=1}^m [x_i,y_i] \subset [0,1]^m$, where 
$N(z)=\#\{k:z_k=x_k\}$.

%\begin{defn}
%{\color{red} For sets, this will correspond to what we are doing for volumes later. What is the correct extension to $\Rpl^n$?}
%\end{defn}

%{\color{red} Conjectural statements now.}

%\begin{thm}
%Supermodularity of level $k$ is characterized by the mixed derivatives of order $k+1$ being signed.
%\end{thm}

\subsection{Higher order supermodularity of volume}
\label{ss:vol-super}

%One of the most essential examples of higher order supermodularity is  for us the functions of the following general form \ben v(x)=\vol_n\bigg(\sum_{i\in\setS} x_i B_i \bigg) \een where $x_i \ge 0$ and $B_i$ are convex sets in $\RL^n$.

%\subsection{Vanilla supermodularity}

%To whet the appetite, we begin with the following proposition, which was proved in \cite{FMMZ18} by combining tools from optimal transport  \cite{ADM99} with a determinant inequality from \cite{MG19}; we provide a simple alternate proof using mixed volumes.

\begin{thm}\label{thm:supmodvol}
Let $n,k\in\Nat$. Let $B_1, \dots, B_k$ be convex compact sets of $\RL^{n}$. Then the function $v: \RL^k_+ \to \RL$ defined as 
\begin{equation}\label{eq:upmodvol}
v(x)=\left|\sum_{i=1}^k x_i B_i \right|
\end{equation}
is $m$-supermodular for any $m\in\N$. 
\end{thm}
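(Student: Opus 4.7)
The plan is to derive the statement directly from the local characterization of higher order supermodularity established in Theorem~\ref{thm:diff}. First, by the fundamental multilinearity expansion (\ref{eq:mvf}), one can write
\[
v(x) \;=\; \sum_{j_1,\ldots,j_n = 1}^{k} V(B_{j_1}, \ldots, B_{j_n})\, x_{j_1}\cdots x_{j_n},
\]
so $v$ is a homogeneous polynomial of degree $n$ in the variables $x_1,\dots,x_k$. The key point is that every coefficient is nonnegative, since mixed volumes are nonnegative. In particular, $v \in C^\infty(\RL_+^k)$, so Theorem~\ref{thm:diff} is applicable to $v$ for every $m$.

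Thus it suffices to verify that, for any distinct $i_1,\dots,i_m \in [k]$ and any $x \in \RL_+^k$, the mixed partial derivative $\partial^m_{i_1,\dots,i_m} v(x)$ is nonnegative. If $m > n$, this derivative is identically zero because $v$ is a polynomial of degree $n$. If $m \le n$, then differentiating a single monomial $x_{j_1}\cdots x_{j_n}$ with respect to distinct variables either gives $0$ (when some $i_\ell$ does not appear among the $j$'s) or a monomial of degree $n-m$ with a positive integer coefficient. Summing and using that all mixed volume coefficients are nonnegative, $\partial^m_{i_1,\dots,i_m} v$ is a polynomial with nonnegative coefficients, and therefore nonnegative on $\RL_+^k$. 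Theorem~\ref{thm:diff} then yields $m$-supermodularity for every $m \le k$.

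For completeness, the edge case $m > k$ (which falls outside the indexing hypothesis of Theorem~\ref{thm:diff}) can be handled separately and trivially: any vectors $x_1,\dots,x_m\in\RL_+^k$ with pairwise disjoint supports must include at least one zero vector when $m > k$, and a short telescoping computation splitting the sum in Definition~\ref{def:supermod-Rm} according to whether the index of a zero vector lies in $I$ shows the alternating sum vanishes. Overall, there is no real obstacle in this proof: the substance is entirely captured by the polynomial expansion (\ref{eq:mvf}) together with the nonnegativity of mixed volumes, and Theorem~\ref{thm:diff} converts this algebraic fact directly into higher order supermodularity.
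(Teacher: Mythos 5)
Your proof is correct and follows essentially the same route as the paper: expand $v$ via the mixed volume formula \eqref{eq:mvf} as a polynomial with nonnegative coefficients, observe that all mixed partial derivatives are therefore nonnegative on $\RL_+^k$, and invoke Theorem~\ref{thm:diff}. The only addition is your explicit handling of the edge case $m>k$, which the paper glosses over; that observation is correct and makes the ``for any $m\in\N$'' in the statement rigorous against Definition~\ref{def:supermod-Rm}, but it does not change the substance of the argument.
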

\begin{proof}
From the mixed volume formula \eqref{eq:mvf}, the function $v$ is a polynomial with non-negative coefficients so its mixed derivatives of any order are non-negative on $\RL_+^n$. By Theorem \ref{thm:diff}, we conclude that it is $m$-supermodular for any $m$.
\end{proof}
\begin{rem}\label{rk:mixed-vol-sm}  Theorem \ref{thm:supmodvol} can be given in a more general form: for any natural number $l \le n$, any convex bodies $C_1,\dots, C_l$ in $\RL^n$ and any convex sets $B_1, \dots, B_k$ in $\RL^{n}$, the function $v: \RL^k_+ \to \RL$
$$v(x)=V\left(\left(\sum_{i=1}^k x_i B_i\right)[n-l], C_1, \dots, C_l \right)$$ is $m$-supermodular for any $m\in\N$. 
\end{rem}

% \begin{thm}\label{thm:smod}
% Let $n\in\Nat$. For compact convex subsets $B_1, B_2, B_3$ of $\R^{n}$, one has
% \be\label{eqset}
% \vol_n(B_1+B_2+B_3) + \vol_n(B_1) \geq \vol_n(B_1+B_2) + \vol_n(B_1+B_3).
% \ee
% \end{thm}

% \begin{proof}
% We apply the mixed volume formula \eqref{eq:mvf} for $t=1$, $K=B_1+B_2$ and $L=B_3$ to get
% \ben\begin{split}
% \vol_n(B_1+B_2+B_3)-\vol_n(B_1+B_2)
% &=\sum_{k=1}^{n}{n\choose k}V((B_1+B_2)[n-k],B_3[k]) .
% \end{split}\een
% By translation-invariance of mixed volumes, we may assume that $0\in B_2$, and thus that $B_1\subset B_1+B_2$.
% Using the monotonicity property of mixed volumes, we have the inequality 
% \ben\begin{split}
% \vol_n(B_1+B_2+B_3)-\vol_n(B_1+B_2)
% &\ge \sum_{k=1}^{n}{n\choose k}V(B_1[n-k],B_3[k]) \\
% &=\vol_n(B_1+B_3)-\vol_n(B_3) ,
% \end{split}\een
% using again the mixed volume formula at the end with $t=1$, $K=B_1$ and $L=B_3$.
% \end{proof}
We notice that in Theorem \ref{thm:supmodvol} the convexity assumption is essential. Indeed, as was observed in \cite{FMMZ18}, for $k=3$, there exists non convex sets $B_1, B_2, B_3$ such that  the function $v$ defined above is not supermodular. We will discuss this issue in more details in Section \ref{sec:notconv} below.
Using Theorem \ref{thm:supmodvol}, Remark \ref{rk:mixed-vol-sm}, Remark \ref{rk:setsuper} and Theorem \ref{thm:compr} we deduce the following corollary.

\begin{cor}\label{cor:supmod}
Let $n,k\in\Nat$ and $B_1,\dots B_k$ be compact convex sets of $\RL^n$. Let $0\le l \le n$ and $C_1,\dots, C_l$ be convex bodies in $\RL^n$. Then
\begin{enumerate}
\item the function $\bar{v}:2^{[k]}\ra [0,\infty)$ defined by
\be\label{def:setfn-v}
\bar{v}(\setS)=V\left(\left(\sum_{i\in\setS} B_i\right) [n-l], C_1, \dots, C_l \right) 
\ee
for each $\setS\subset [k]$, is a $m$-supermodular set function, for any $m\in\Nat$.
% \item If
% \be
% v(x)=\vol_n\bigg(\sum_{i\in\setS} x_i B_i \bigg) 
% \ee
% for each $\setS\subset [k]$, then $v:\Rpl^{[k]}\ra [0,\infty)$ is a function with all mixed second derivatives being nonnegative.
\item Let $\calA$ and $\calB$ be finite multisets of subsets of $[n]$, with $\calA > \calB$.
Then
\be
\sum_{\setS\in\calA} \bar{v}(\setS) \leq \sum_{\setT\in\calB} \bar{v}(\setT).
\ee
%Domination of sums of volumes in the Bollobas-type partial ordering on hypergraphs.
%\item Monotonicity of mixed volumes on $\calK$.
\end{enumerate}
%Moreover, each of these statements is equivalent to Theorem~\ref{thm:smod}.
\end{cor}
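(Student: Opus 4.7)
The plan is to deduce the corollary directly by chaining together the three preparatory results already established in the paper: the $m$-supermodularity of mixed volumes as functions on $\RL_+^k$ (Remark \ref{rk:mixed-vol-sm}), the passage from supermodularity on the orthant to supermodularity of the associated set function (Remark \ref{rk:setsuper}, or equivalently Lemma \ref{lem:set-rn} adapted to higher order), and the compression-monotonicity theorem for supermodular set functions (Theorem \ref{thm:compr}). No new analytic input should be needed.

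For part (1), I would first observe that, writing $e(\setS)\in\{0,1\}^k$ for the indicator vector of $\setS\subset[k]$, one has the tautology
\[
\sum_{i\in\setS} B_i \;=\; \sum_{i=1}^k e(\setS)_i\, B_i,
\]
so that $\bar{v}(\setS)=v(e(\setS))$, where $v:\RL_+^k\to[0,\infty)$ is the mixed-volume function from Remark \ref{rk:mixed-vol-sm}. Remark \ref{rk:mixed-vol-sm} tells us that $v$ is $m$-supermodular on $\RL_+^k$ for every $m\in\NN$ (it is a polynomial in $x_1,\dots,x_k$ with non-negative coefficients, so by Theorem \ref{thm:diff} all its mixed partial derivatives of every order are non-negative). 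The proof of Lemma \ref{lem:set-rn} then carries over verbatim to the $m$-th order: for any $s_0$ and mutually disjoint $s_1,\dots,s_m\subset[k]$ one has $e(s_0\cup\bigcup_{i\in I}s_i) = e(s_0)\vee\bigvee_{i\in I}e(s_i)$, and the $s_i$ being disjoint guarantees the compatible disjoint-support condition appearing in Definition \ref{def:supermod-Rm}. Evaluating the $m$-supermodularity inequality for $v$ at these vectors yields exactly the defining inequality of $m$-supermodularity for $\bar{v}$ in Definition \ref{eq:hsup}.

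For part (2), note that taking $m=2$ in part (1) shows in particular that $\bar{v}$ is an (ordinary) supermodular set function on $2^{[k]}$. The conclusion then follows by a direct application of Theorem \ref{thm:compr}, whose hypothesis $\calA>\calB$ in the compression partial order is exactly what is assumed, and which delivers
\[
\sum_{\setS\in\calA} \bar{v}(\setS) \;\leq\; \sum_{\setT\in\calB} \bar{v}(\setT).
\]

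There is no real obstacle in this argument; it is purely a bookkeeping step combining the pieces already assembled. The only point that requires a moment of care is the translation between $m$-supermodularity on the orthant and on the Boolean lattice for $m\geq 3$; but as indicated above, the argument of Lemma \ref{lem:set-rn} extends with no essential change, since the map $\setS\mapsto e(\setS)$ intertwines $\cup,\cap$ on disjoint sets with $\vee,\wedge$ on $\{0,1\}^k$-vectors with disjoint supports.
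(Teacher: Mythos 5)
Your proof is correct and follows exactly the route the paper intends: identify $\bar v(\setS)=v(e(\setS))$ with $v$ from Remark~\ref{rk:mixed-vol-sm}, transfer $m$-supermodularity from the orthant to the Boolean lattice via Remark~\ref{rk:setsuper} (whose content is the $m$-th order extension of Lemma~\ref{lem:set-rn} that you spell out), and then apply Theorem~\ref{thm:compr} using the $m=2$ case. The paper itself gives no separate proof beyond citing these same ingredients, so your write-up is a faithful expansion of the intended argument; the only small point worth noting is that the multisets $\calA,\calB$ in part~(2) should be of subsets of $[k]$ (not $[n]$ as written in the statement), which you implicitly treat correctly by applying Theorem~\ref{thm:compr} on the ground set on which $\bar v$ is defined.
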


Let us note that the above $m$-supermodularity of the function $\bar{v}$ is equivalent to the fact that for any convex bodies $B_0, B_1, \dots, B_{k}, C_1, \dots, C_l$ in $\RL^n$ 
$$
\sum_{\setS\subset[k]}(-1)^{k-|\setS|}V\left(\left(B_0+\sum_{i\in \setS}B_i\right)[n-l], C_1, \dots, C_l\right)\ge0.
$$
% \begin{proof}
% Let  ${\cal C}$ be a collection of sets $K_1, \dots, K_{n-k}$ and  denote $V(L[k], {\cal C})=V(L[k], K_1, \dots, K_{n-k})$. Fix, $1 \le k \le n$. We prove the theorem by induction on $m$.  The case $m=2$ is proved in Theorem \ref{thm:supmod2}. Assume that the statement of the theorem  is true for $m-1$. Then
% \begin{eqnarray*}
% &&\sum_{S\subset[m]}(-1)^{m-|S|}V((B_0+\sum_{i\in S}B_i)[k], {\cal C})\\ 
% &=& \sum_{\substack{S\subset[m], \\ m \in S} }(-1)^{m-|S|}V((B_0+\sum_{i\in S}B_i)[k], {\cal C}) + \sum_{S\subset[m-1]}(-1)^{m-|S|}V((B_0+\sum_{i\in S}B_i) [k], {\cal C})
% \end{eqnarray*}
% $$
% = \sum_{S\subset[m-1] } \left( (-1)^{m-(|S|+1)}\sum_{l=0}^k  {k \choose l}     V((B_0+\sum_{i\in S}B_i)[l], B_m[k-l], {\cal C}) + (-1)^{m-|S|}V((B_0+\sum_{i\in S}B_i)[k], {\cal C}) \right)
% $$
% $$
% = \sum_{S\subset[m-1] }(-1)^{m-1-|S|} \left( \sum_{l=0}^k  {k \choose l}     V((B_0+\sum_{i\in S}B_i)[l], B_m[k-l], {\cal C}) - V((B_0+\sum_{i\in S}B_i)[k], {\cal C})  \right)
% $$
% $$
% = \sum_{S\subset[m-1] }(-1)^{m-1-|S|} \left( \sum_{l=0}^{k-1}  {k \choose l}     V((B_0+\sum_{i\in S}B_i)[l], B_m[k-l], {\cal C})  \right)
% $$
% $$
% =  \sum_{l=0}^{k-1}  {k \choose l}    \sum_{S\subset[m-1] }(-1)^{m-1-|S|} V((B_0+\sum_{i\in S}B_i)[l], B_m[k-l], {\cal C})  \ge 0
% $$
% \end{proof}
Applying the previous theorem to $l=0$, we get
\begin{equation}\label{eq_B_0}
\sum_{\setS\subset[k]}(-1)^{k-|\setS|}\left|B_0+\sum_{i\in\setS} B_i\right| \geq 0.
\end{equation}
The above inequality for $k=n$ and $B_0=\{0\}$ follows also directly from the following classical formula (see Lemma 5.1.4 in \cite{Sch14:book})
$$
\sum_{\setS\subset[n]}(-1)^{n-|\setS|}\left|\sum_{i\in\setS} B_i\right| = n!V(B_1, \dots, B_n).
$$
In the same way, we can also give another proof of the general case of (\ref{eq_B_0}).
\begin{thm}\label{sch} 
Let $B_0, B_1, \dots, B_{m}$ be convex bodies in $\RL^n$.  
\ben
\sum_{\setS\subset[m]} (-1)^{m-|\setS|} \left|B_0+\sum_{i\in\setS} B_i\right| =\!\!\!
\displaystyle\sum\limits_{\substack{\sum_{i=0}^m k_i=n;\\ k_1, \dots, k_m \ge 1}} {n \choose k_0, k_1, \dots, k_m} V(B_0[k_0], B_1[k_1], \dots, B_m[k_m])
\een
for $m\le n$ and zero otherwise. 
\end{thm}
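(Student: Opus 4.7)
The plan is to prove this identity via a direct inclusion-exclusion on the multinomial expansion of volume. I will expand each $|B_0+\sum_{i\in \setS}B_i|$ using the mixed volume formula \eqref{eq:mvf} (with all dilations equal to $1$), interchange the two summations, and recognize that the resulting inner alternating sum isolates precisely the multi-indices whose ``support'' is all of $[m]$.

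Concretely, for each $\setS\subset[m]$ the formula \eqref{eq:mvf} gives
\[
\left|B_0 + \sum_{i \in \setS} B_i\right| = \sum_{\substack{k_0, \dots, k_m \ge 0\\ k_0+\cdots+k_m = n\\ k_i=0 \text{ for } i\notin \setS}} \binom{n}{k_0, k_1, \dots, k_m}\, V(B_0[k_0], B_1[k_1], \dots, B_m[k_m]),
\]
with the usual convention that entries of the form $B_i[0]$ are omitted from the mixed volume. For a multi-index $k=(k_1,\dots,k_m)$, denote its support by $\setT(k):=\{i\in[m]:k_i\ge 1\}$; the constraint ``$k_i=0$ for $i\notin \setS$'' is then equivalent to $\setT(k)\subset \setS$. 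Substituting this expansion into the left-hand side of the theorem and swapping the two summations yields
\[
\sum_{\setS\subset[m]} (-1)^{m-|\setS|} \left|B_0+\sum_{i\in\setS} B_i\right| = \sum_{k_0+\cdots+k_m=n} \binom{n}{k_0,\dots,k_m}\, V(B_0[k_0],\dots,B_m[k_m])\sum_{\setS\supset \setT(k)}(-1)^{m-|\setS|}.
\]

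The key step is then the inner sum. Writing any $\setS\supset \setT(k)$ as $\setS = \setT(k)\cup \setS'$ with $\setS'\subset [m]\setminus \setT(k)$,
\[
\sum_{\setS\supset \setT(k)}(-1)^{m-|\setS|}=(-1)^{m-|\setT(k)|}\sum_{\setS'\subset[m]\setminus \setT(k)}(-1)^{|\setS'|},
\]
which, up to sign, equals $(1-1)^{|[m]\setminus \setT(k)|}$; this vanishes unless $\setT(k)=[m]$, in which case it equals $1$. Hence only multi-indices with $k_1,\dots,k_m\ge 1$ survive, each with coefficient $1$, which is exactly the stated identity. When $m>n$, the constraints $k_i\ge 1$ for all $i\in[m]$ together with $k_0+\cdots+k_m=n$ are incompatible, so the right-hand side is the empty sum $0$.

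There is essentially no serious obstacle; the argument reduces to inclusion-exclusion bookkeeping once one rewrites the multinomial expansion \eqref{eq:mvf} in the ``support-inclusion'' form above. The only care needed is to track the convention that bodies with zero multiplicity drop out of a mixed volume, so that the support $\setT(k)$ governs exactly the summands that appear.
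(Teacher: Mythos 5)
Your proof is correct, and it takes a genuinely different route from the paper's. The paper introduces the polynomial $g(t_0,\dots,t_m)=\sum_{\setS\subset[m]}(-1)^{m-|\setS|}\bigl|t_0B_0+\sum_{i\in\setS}t_iB_i\bigr|$, observes that $g$ vanishes whenever any $t_i$ with $i\geq 1$ is set to zero (by a telescoping cancellation), and deduces from homogeneity of degree $n$ that every surviving monomial must contain a positive power of each $t_1,\dots,t_m$; this forces $m\le n$ and isolates the full-support terms in one stroke. You instead expand each volume via the multinomial form of \eqref{eq:mvf}, interchange the two finite sums, and evaluate the inner alternating sum $\sum_{\setS\supset\setT(k)}(-1)^{m-|\setS|}=(1-1)^{|[m]\setminus\setT(k)|}$, which is the classical inclusion-exclusion vanishing lemma. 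These are two dressings of the same underlying cancellation — your binomial-theorem computation is exactly the combinatorial content of the paper's ``set $t_1=0$ and telescope'' step — but operationally the paper works with the polynomial abstractly and never swaps summations, whereas you do explicit multi-index bookkeeping. Your version is arguably more elementary and self-contained (it requires only the multinomial expansion of volume plus the standard inclusion-exclusion identity), while the paper's is shorter and makes the $m>n$ vanishing visibly a degree count; either is a complete and rigorous proof.
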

\begin{proof} Following  the proof of Lemma 5.1.4 in \cite{Sch14:book}. Define
$$
g(t_0, t_1, \dots, t_m)=\sum_{\setS\subset[m]} (-1)^{m-|\setS|} \left|t_0B_0+\sum_{i\in\setS} t_iB_i\right|.
$$
Observe that $g$ is a homogeneous polynomial of degree $n$ and note that $g(t_0, 0, t_2, \dots, t_m)=0$, which can be seen by noticing that, in this particular case, the sum is telescopic. This implies that, in the polynomial $g(t_0, \dots, t_m)$, all  monomials with non-zero coefficients must contain a non-zero power of $t_1$. The same being true for each $t_i$, $i \ge 1$,  there is no non-zero monomials if $m>n$.  If $m \le n$ all non-zero monomials must come from the case $|\setS|=m$, i.e. from
$$
|t_0B_0+t_1 B_1+ \dots +t_mB_m|,
$$
which finishes the proof.
\end{proof}

% \begin{thm}\label{thm:supmod-gen-equiv}
% Let $\calK'$ be a subset of the set $\calK_n$ of convex bodies in $\R^{n}$ that is closed under Minkowski summation. Let $m\in\Nat$.
% Then the following statements are equivalent:
% \begin{enumerate}
% \item If $B_i\in\calK'$, 
% \be\begin{split}
% \sum_{j =0}^m \sum_{|\setS|=j} (-1)^{m-j} \vol_n(\sum_{i\in\setS} B_i) \geq 0. 
% \end{split}\ee
% In other words, volume is $m$-level supermodular.

% \item If $B_i\in\calK'$, and
% \be
% v(x)=\vol_n\bigg(\sum_{i\in\setS} x_i B_i \bigg) 
% \ee
% for each $\setS\subset [k]$, then $v:\Rpl^{[k]}\ra [0,\infty)$ is a function with signed $m$-th mixed derivatives (?).

% \item $(m-1)$-level supermodularity of mixed volumes on $\calK'$.

% \item Domination of sums of mixed volumes in the Emerson-type partial ordering on hypergraphs.
% \end{enumerate}
% \end{thm}

%As a consequence, we obtain the following rather nice result about a kind of complete monotonicity of
% volumes of set increments on the class of convex bodies $\calK_n^0$ containing 0.

 Thanks to the fact that supermodular set functions taking the value 0 at the empty set are fractionally
superadditive (see, e.g., \cite{MP82, MT10}), we can immediately deduce the following inequality.
Let $n\ge1$, $k\ge2$ be integers and let $A_1, \dots, A_k$ be $k$ convex sets in $\R^n$. Then, for any
fractional partition $\beta$ using a hypergraph $\collS$  on $[k]$, 
\begin{eqnarray}\label{eq:fsa}
\left|\sum_{i=1}^kA_i\right| \ge \sum_{s\in\collS} \beta(s)\left|\sum_{j\in s}A_j\right|.
\end{eqnarray}
It was shown  in  \cite{BM21}
that \eqref{eq:fsa} actually extends to all compact sets in $\RL^n$, but supermodularity does not extend to compact sets as discussed in the next section.

\subsection{Going beyond convex bodies}\label{sec:notconv}

Consider sets $A, B \subset \R^n$, such that $0 \in B$. Define $\Delta_B(A)=(A+B)\setminus A$, and note that $A+B$ is always a superset of $A$ because of the assumption
that $0\in B$. 
%The fact that Minkowski summation can only increase volume
%is captured by $|\Delta_B(A)|\geq 0$ for all $A, B\in \calK_n$, since by translation-invariance of volume,
%we can start with two arbitrary convex bodies $A', B'$, translate them to both contain 0, and then
%observe that $|A'+B'|-|A'|=|A+B|-|A|=|\Delta_B(A)|$. 
The supermodularity of volume is also saying something about set increments. Indeed, for any sets $A, B, C$ consider
\ben
\Delta_C\Delta_B(A)
=\Delta_C \big((A+B)\setminus A\big)
=\bigg(\big((A+B)\setminus A\big)+C \bigg)\setminus \big((A+B)\setminus A\big).
\een
We have, if $0 \in B \cap C$:
\begin{align}\label{difference}
\left|\Delta_C\Delta_B(A)\right|
&=\left|\big((A+B)\setminus A\big)+C\right|- \left|(A+B)\setminus A\right| \nonumber\\
&\geq \left|(A+B+C)\setminus (A+C)\right|- \left|A+B\right|+\left|A \right|\nonumber\\
&= |A+B+C|- |A+C|- |A+B|+ |A|,
\end{align}
where %the last inequality is from supermodularity and 
the inequality follows   from the general fact that
$(K+C)\setminus (L+C)\subset (K\setminus L)+C$. Moreover, if $A,B,C$ are convex, compact sets then the estimate is  non-trivial, i.e., using Theorem~\ref{sch}  we get that
the right hand side of the above quantity is non-negative. 
It is interesting to note that the $\Delta$ operation is not commutative, i.e. $\Delta_C\Delta_B(A) \not = \Delta_A\Delta_C(A)$; this can be seen, for example, in $\RL^2$ by taking $A$ to be a square, $B$ to be a segment, and $C$ to be a Euclidean ball.

It is natural to ask if the higher-order analog of this observation remains true.
%plausible, that $\Delta_B(A)$ sutisfies the higher order supermodularity property. We conjecture that 
\begin{ques}
Let  $m\in \Nat$ and  $B_1, \ldots B_m \subset \RL^n$ be compact sets containing the origin. Then, for any compact  $B_0 \subset \RL^n$, is it true that
\ben
\left|\Delta_{B_1}\ldots \Delta_{B_m} (B_0)\right|\geq\sum_{s\subset[m]}(-1)^{m-|s|}\left|B_0+\sum_{i\in s} B_i\right| ?
\een
\end{ques}
The inequality (\ref{difference}) gives a positive answer to the above question in the case $m=2$.  We also observe that if $B_0, B_1, \dots, B_m$ are convex, then the right hand side is non-negative thanks to Theorem~\ref{thm:supmodvol}. We note that  it was observed in \cite{FMMZ18}, by considering $A=\{0,1\}$ and $B=C=[0,1]$ in $\RL^1$,
that the volume of Minkowski sums cannot be supermodular (even in dimension 1) if the convexity assumption on the set $A$ is removed.
Nonetheless \cite{FMMZ18} observed that if $A, B, C\subset \R$ are compact, then
\ben
|A+B+C| +|\conv(A)|\ge |A+B| + |A+C|;
\een
it is unknown if this extends to higher dimension. In particular, we do not know if the following conjecture is true for $n\ge2$.

\begin{conj}\label{submodulnotconv} For any convex body $A$  and any compact sets $B$ and $C$ in $\R^n$,
$$|A+B+C| +|A|\ge |A+B| + |A+C|. $$
\end{conj}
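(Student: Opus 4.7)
The plan is to attempt a reduction to the convex case established in Theorem 4.5 of \cite{FMMZ18} via a two-step approximation-and-induction argument. As a first step, approximate $B$ and $C$ in the Hausdorff metric by finite sets $B_k, C_k$. Because Minkowski addition and Lebesgue measure of compact sets are Hausdorff-continuous on the class of compact sets of $\R^n$, the conjecture for general compact $B,C$ would follow from the inequality
$$|A+B+C|+|A|\ge |A+B|+|A+C|$$
for all finite sets $B, C$ and every convex body $A$. This reduction isolates the essential difficulty into a combinatorial/geometric problem about unions of translates of the convex body $A$.

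The second step is induction on $r+s$, where $r=\#B$ and $s=\#C$. Writing $B'=B\setminus\{b_r\}$ and using the identities
$$|A+B|=|A+B'|+|A|-\bigl|(A+B')\cap(A+b_r)\bigr|,$$
$$|A+B+C|=|A+B'+C|+|A+C|-\bigl|(A+B'+C)\cap(A+b_r+C)\bigr|,$$
the target inequality reduces, after invoking the inductive hypothesis on $(A,B',C)$, to the auxiliary comparison
$$\bigl|(A+B')\cap(A+b_r)\bigr|-\bigl|(A+B'+C)\cap(A+b_r+C)\bigr|\ge |A|-|A+C|.$$
A natural starting point is the inclusion $(A+B')\cap(A+b_r)+C\subset (A+B'+C)\cap(A+b_r+C)$, combined with Brunn--Minkowski \eqref{BM} applied to the overlap; the hope is that this controls the loss on the left from the expansion by $C$ on the right, matching the $|A+C|-|A|$ growth on the far right.

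The main obstacle is precisely this auxiliary step: since $A+B'$ is a union of translates of $A$ (hence non-convex as soon as $r\ge 3$), the convex-case supermodularity of Theorem 4.5 of \cite{FMMZ18} cannot be invoked directly on the overlap, and Brunn--Minkowski is not applicable either. A complementary approach would be a variational/symmetrization argument, passing $B$ and $C$ through a shadow system or iterated Steiner symmetrizations that drive them continuously toward their convex hulls while monitoring the signed quantity $|A+B+C|+|A|-|A+B|-|A+C|$; at the convex limit, Theorem 4.5 of \cite{FMMZ18} closes the estimate. The difficulty there is that such symmetrizations simultaneously decrease $|A+B|$, $|A+C|$, and $|A+B+C|$, leaving the sign of the change in the signed combination undetermined. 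Consequently, a successful proof will likely require either a sharp quantitative version of the auxiliary overlap inequality above, or a new symmetrization procedure that preserves convexity of $A$ and forces $B, C$ toward their convex hulls at a monotonically controlled rate --- either of which would constitute genuine progress beyond the tools currently available.
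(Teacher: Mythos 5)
Before assessing the argument itself, note that the statement you are addressing is labeled a \emph{conjecture} in the paper, not a theorem: the sentences just before it read ``it is unknown if this extends to higher dimension'' and ``we do not know if the following conjecture is true for $n\ge 2$.'' The paper does not prove Conjecture~\ref{submodulnotconv}; it only establishes the special case where $B$ is a zonoid (the theorem immediately following the conjecture), and that proof uses a mechanism entirely different from your induction. There, one starts from the pointwise inequality $|M+[0,tu]|\ge |M|+t\,|P_{u^\perp}M|_{n-1}$ for compact $M$ (equality for convex $M$), combines it with the monotonicity $|P_{u^\perp}(A+C)|_{n-1}\ge |P_{u^\perp}A|_{n-1}$ to get the two-body comparison for a single segment, then iterates over the generating segments of a zonotope and passes to the zonoid limit. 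That argument leans on the segment structure of $B$ and says nothing about general compact $B$.

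Your sketch is candid about not closing, and the gap you point to is the real one. After the inclusion--exclusion rewriting and the inductive hypothesis on $(A,B',C)$, what remains is the overlap estimate
\[
\bigl|(A+B'+C)\cap(A+b_r+C)\bigr|-\bigl|(A+B')\cap(A+b_r)\bigr|\;\le\;|A+C|-|A|,
\]
and this is already inaccessible at $\#B'=1$. Writing $U=A\cap(A+b)$ (which is convex), the inclusion $U+C\subset(A+C)\cap(A+b+C)$ only bounds the overlap from \emph{below} by $|U+C|$, not from above; and even the auxiliary comparison $|U+C|-|U|\le |A+C|-|A|$ that this would require is not a Steiner/mixed-volume monotonicity statement, since $C$ is not assumed convex. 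The symmetrization route you mention suffers from exactly the sign-indeterminacy you describe. In addition, the Hausdorff-approximation reduction to finite $B,C$ needs more care than stated, since Lebesgue measure is only upper semicontinuous along Hausdorff limits and the perimeter of $A+B_k$ can grow with $\#B_k$. In short, the proposal does not prove the statement; that is consistent with the fact that, in the paper, the statement remains open and is resolved only under the structural hypothesis that $B$ is a zonoid.
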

We can confirm Conjecture \ref{submodulnotconv} under the assumption that $B$ is a zonoid. 

\begin{thm}
 Assume $A$ is a convex compact set, $B$ is a zonoid and $C$ is any compact set in $\RL^n$. Then
$$|A+B+C| +|A|\ge |A+B| + |A+C|. $$
\end{thm}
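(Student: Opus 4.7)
The plan is to reduce to the case where $B$ is a segment, propagate the inequality to zonotopes through a Minkowski-additivity trick, and then approximate an arbitrary zonoid by zonotopes. Since the desired inequality is translation-invariant in each of $A$, $B$, $C$ separately, I may assume $0\in A\cap B\cap C$.

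\emph{Base case $B=[0,v]$.} The key ingredient is the one-sided Steiner-type estimate
\[
|K+[0,v]|\ \ge\ |K|+|v|\cdot|P_{v^\perp}K|_{n-1},
\]
valid for every compact set $K\subset\RL^n$, with equality when $K$ is convex. Slicing along the direction of $v$ reduces the claim to the one-dimensional fact that $|S+[0,1]|_1\ge|S|_1+1$ for every nonempty compact $S\subset\RL$; this is immediate because the half-open interval $(\max S,\max S+1]$ is disjoint from $S$ and contained in $S+[0,1]$, with equality precisely when $S$ is an interval. Applying the displayed bound to $K=A+C$ and to $K=A$ (equality since $A$ is convex), and using $P_{v^\perp}(A+C)=P_{v^\perp}A+P_{v^\perp}C\supset P_{v^\perp}A$ (because $0\in C$),
\[
|A+C+[0,v]|-|A+C|\ \ge\ |v|\cdot|P_{v^\perp}A|_{n-1}\ =\ |A+[0,v]|-|A|,
\]
which rearranges to the desired inequality for $B=[0,v]$.

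\emph{Extension to zonotopes and zonoids.} Let $\mathcal{Z}$ be the class of convex bodies $B$ for which the inequality holds for every compact pair $(A,C)$ with $A$ convex. If $B_1,B_2\in\mathcal{Z}$, apply the property of $B_1$ to the pair $(A+B_2,C)$ and the property of $B_2$ to $(A,C)$, then add the two resulting inequalities and cancel the common terms $|A+B_2|$ and $|A+B_2+C|$; this yields the inequality for $B_1+B_2$. Combined with the base case, every zonotope lies in $\mathcal{Z}$. For a general zonoid $B$, choose zonotopes $Z_n\to B$ in the Hausdorff metric. Since the sums $A+Z_n$ are convex bodies converging to the convex body $A+B$, continuity of volume on convex bodies gives $|A+Z_n|\to|A+B|$. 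Meanwhile, from $A+Z_n+C\subset A+B+C+\eps_n B_2^n$ with $\eps_n\to 0$, together with $|L+\eps B_2^n|\to|L|$ as $\eps\to 0$ for any compact $L$, we obtain the upper semicontinuity bound $\limsup|A+Z_n+C|\le|A+B+C|$. Taking $\limsup$ in the zonotope inequality $|A+Z_n+C|+|A|\ge|A+Z_n|+|A+C|$ yields the conclusion.

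The main obstacle is the base case: the one-dimensional estimate $|S+[0,1]|\ge|S|+1$ is the crucial nonconvex input, and the convexity of $A$ enters only through the equality case $|A+[0,v]|=|A|+|v|\cdot|P_{v^\perp}A|_{n-1}$, which is precisely what breaks the symmetry between the roles of $A$ and $C$. The subsequent induction is a routine Minkowski-addition manipulation, and the zonoid approximation only needs upper semicontinuity of Lebesgue measure under Hausdorff convergence (not full continuity), since the fully convex piece $A+B$ automatically has continuous volume.
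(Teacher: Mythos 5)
Your proof is correct and takes essentially the same route as the paper's: the same Steiner-type estimate $|K+[0,v]|\ge |K|+|v|\,|P_{v^\perp}K|_{n-1}$ for compact $K$ with equality for convex $K$, the same projection monotonicity $P_{v^\perp}(A+C)\supset P_{v^\perp}A$, and the same bootstrap from segments to zonotopes to zonoids. Two small differences worth noting: you prove the key one-sided Steiner estimate by Fubini-slicing rather than the paper's boundary-shift construction with $\partial_u M$ (both are Fubini in disguise, but yours is a touch cleaner), and your treatment of the zonoid limit is actually more careful than the paper's -- since $A+Z_k+C$ need not be convex when $C$ isn't, full continuity of volume under Hausdorff convergence is unavailable, and you correctly observe that only upper semicontinuity on that term plus genuine continuity on the convex term $A+Z_k$ is needed.
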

\begin{proof}
By approximation, we may assume that $B$ is a zonotope.
Using the definition of mixed volumes (\ref{eq:mvf})
 and (\ref{eq:proj1}) we get that for any convex compact set $M$ in $\RL^n$
 $$
|M+[0,tu]|- |M|=t|P_{u^\perp} M|_{n-1}, \mbox{ for all } t>0, u\in  S^{n-1}.
 $$
 The above formula can be also proved using a geometric approach and thus studied in the case of not necessary convex $M$. Indeed, consider a compact set $M$ in $\RL^n,$ $t>0$ and $u\in S^{n-1},$ let $\partial_u M$ be the set of all $x \in \partial M$ such that $x \cdot u \ge y \cdot u,$ for all $y\in M$ for which $P_{u^\perp} y =P_{u^\perp}x.$ Note that
 $$
 (\partial_u M + (0,tu]) \cap M =\emptyset, \mbox{ but } M \cup (\partial_u M+ (0, tu]) \subseteq M+[0,tu].
 $$
 Thus 
 $$
|M+[0,tu]|  \ge |M|+  t|P_{u^\perp} M|_{n-1}.
 $$
 Now, we are ready to prove the theorem with $B=[0,tu]$
$$
      |A+C+[0,tu]| -|A+C| \ge t|P_{u^\perp} (A+C)|_{n-1}
      \ge t |P_{u^\perp} A|_{n-1} 
      = |A+[0,tu]|  -|A|.
$$
Thus, we proved that, for any $u \in \R^n$,
\be\label{in:vec}
|A+C+[0,u]| - |A+[0,u]|   \ge |A+C|   -|A|.
\ee
Now, we can prove the theorem for the case of a zonotope. Indeed, let $Z_k=\sum_{i=1}^{k} [0, u_i]$ be a zonotope. Apply inequality  (\ref{in:vec}) to the convex body $A+Z_{k-1}$ and the vector $u=u_{k}$ to get 
$$
|A+C+Z_{k}| -|A+Z_{k}| \ge  |A+C+Z_{k-1}| -|A+Z_{k-1}|.
$$
Iterate the above inequality to prove the theorem for the case of $B$ being a zonotope. The theorem now follows from continuity of the volume and the fact that every zonoid is a limit of zonotopes. 
\end{proof}

\section{Pl\"unnecke-Ruzsa inequalities for convex bodies}
\label{sec:Pl\"unnecke}

\subsection{Existing Pl\"unnecke-Ruzsa inequality for convex bodies}
\label{ss:plun}

Bobkov and Madiman \cite{BM12:jfa} developed a technique for going from entropy to volume estimates,
by using certain reverse H\"older inequalities that hold for convex measures. 
Specifically,  \cite[Proposition 3.4]{BM12:jfa} shows that if $X_i$ are independent random variables
with $X_i$ uniformly distributed on a convex body $K_i \subset \RL^n$ for each $i=1,\ldots, m$,
then 
$h(X_1+\ldots+X_m)\geq \log |K_1+\ldots+K_m| -n\log m$, where the entropy of a random variable $X$ with density $f$ on $\RL^n$ is defined by 
\begin{equation}\label{eq:entropy}
h(X)=-\int f(x)\log f(x) dx.
\end{equation}
This is a reverse H\"older inequality in the sense that $h(X_1+\ldots+X_m)\leq \log |K_1+\ldots+K_m|$ may be seen
by applying H\"older's inequality and then taking a limit.  More general sharp inequalities relating 
R\'enyi entropies of various orders for measures having convexity properties are described in \cite{FLM20} (see also \cite{BM11:it, FMW16, BFLM17}). Applied to the submodularity of entropy of sums discovered in \cite{Mad08:itw}, they use this technique to demonstrate the following inequality.

%\begin{lem}\label{lem:fracsub}
%Let $X$ and $Y_{1},\ldots, Y_{m}$ be independent $\R^{n}$-valued random vectors with finite entropies.
%Let $\collS_k$ denote the collection of all subsets of $[m]=\{1,\ldots, m\}$ that are of cardinality $k$.
%Then
%$$
%h\bigg(X+ \sum_{i\in[m]} Y_i \bigg) - h(X) \leq 
%\frac{1}{\binom{m-1}{k-1}} \sum_{\setS\in\collS_k}  \bigg[ h\bigg( X+\sum_{i\in\setS} Y_{i}\bigg) - h(X) \bigg].
%$$
%\end{lem}
%
%Suppose $A$ and $B_1, \ldots, B_m$ are compact, convex sets in $\R^n$ with nonempty interior,
%and that $X$ is uniformly distributed on $A$ while each $Y_i$ is uniformly distributed on $B_i$. 
%%Writing $V_{\setS}=|A+\sum_{i\in\setS} B_i|$
%Applying Proposition~\ref{prop:vol-ent}, we have that
%\ben\begin{split}
%\log\bigg[ \frac{|A+\sum_{i\in [m]} B_i|}{|A|} \bigg] -n\log (1+m)
%&\leq h\bigg(X+ \sum_{i\in[m]} Y_i \bigg) - h(X) \\
%&\leq \frac{1}{\binom{m-1}{k-1}} \sum_{\setS\in\collS_k}  \bigg[ h\bigg( X+\sum_{i\in\setS} Y_{i}\bigg) - h(X) \bigg] \\
%&\leq \frac{1}{\binom{m-1}{k-1}} \sum_{\setS\in\collS_k}  \log\bigg[ \frac{|A+\sum_{i\in \setS} B_i|}{|A|} \bigg] .
%\end{split}\een
%Thus we obtain the following corollary.

\begin{thm}\label{thm:frac-Pl\"unnecke}
Let $\collS_k$ denote the collection of all subsets of $[m]=\{1,\ldots, m\}$ that are of cardinality $k$.
Let $A$ and $B_{1},\ldots, B_{m}$ be convex bodies in $\RL^{n}$, and suppose
$$
\bigg|A+\sum_{i\in \setS} B_i \bigg|^{\nth} \leq c_{\setS} |A|^{\nth}
$$
for each $\setS\in\collS_k$, with given numbers $c_{\setS}$.
Then
$$
\bigg|A+\sum_{i=1}^m B_i \bigg|^{\nth}
\leq (1+m) \bigg[\prod_{\setS\in\collS_k} c_{\setS}\bigg]^{\frac{1}{\binom{m-1}{k-1}}} |A|^{\nth} .
$$
\end{thm}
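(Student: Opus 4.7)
My plan is to mimic the entropic Plünnecke-Ruzsa route of Bobkov-Madiman described just above the statement, translating volume ratios to differences of differential entropies of uniform random variables on convex bodies. Let $X$ be uniform on $A$ and $Y_1,\dots,Y_m$ be independent random vectors, with $Y_i$ uniform on $B_i$, all mutually independent. The proof has three ingredients: (i) the straightforward Shannon bound $h(Z)\le \log|\supp Z|$ applied to sums of uniforms on convex bodies, giving for each $\setS\in\collS_k$,
\[
h\Bigl(X+\sum_{i\in \setS} Y_i\Bigr)\le \log\Bigl|A+\sum_{i\in \setS}B_i\Bigr|\le n\log c_{\setS}+\log |A|,
\]
which rewrites as $h(X+\sum_{i\in\setS}Y_i)-h(X)\le n\log c_{\setS}$; (ii) the reverse Hölder estimate $h(X+\sum_{i=1}^m Y_i)\ge \log|A+\sum_{i=1}^m B_i|-n\log(m+1)$, which is exactly \cite[Proposition 3.4]{BM12:jfa} applied to the $m+1$ uniform summands on convex bodies $A,B_1,\dots,B_m$; and (iii) the submodularity of entropy of sums of independent random vectors from \cite{Mad08:itw}, in its fractional form.

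Next I would apply (iii) with the uniform fractional partition on $\collS_k$. Since $|\{\setS\in\collS_k:\setS\ni i\}|=\binom{m-1}{k-1}$ for each $i\in[m]$, the choice $\beta_{\setS}=1/\binom{m-1}{k-1}$ is a fractional partition of $[m]$, and the fractional submodularity of entropy of sums (iterating the basic submodularity $h(X+Y+Z)+h(X)\le h(X+Y)+h(X+Z)$ along compressions, exactly as in Theorem \ref{thm:compr} and Corollary \ref{cor:supmod}) yields
\[
h\Bigl(X+\sum_{i=1}^m Y_i\Bigr)-h(X)\le \frac{1}{\binom{m-1}{k-1}}\sum_{\setS\in\collS_k}\Bigl[h\Bigl(X+\sum_{i\in\setS}Y_i\Bigr)-h(X)\Bigr].
\]
This is the key combinatorial step, analogous to the compression inequality \eqref{eq:fsa} but at the level of entropy rather than log-volume.

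Finally, I would plug (i) into the right-hand side of the submodularity inequality and use (ii) on the left to get
\[
\log\Bigl|A+\sum_{i=1}^m B_i\Bigr|-n\log(m+1)-\log|A|\le \frac{n}{\binom{m-1}{k-1}}\sum_{\setS\in\collS_k}\log c_{\setS},
\]
and then exponentiate and take $n$-th roots to obtain the claimed bound. The main conceptual obstacle is justifying the fractional form of Madiman's submodularity for arbitrary fractional partitions, in particular for the uniform fractional partition on $\collS_k$; this requires iterating the three-variable submodularity inequality along a sequence of elementary compressions (in the spirit of Theorem \ref{thm:compr}), and verifying that the averaging coefficients telescope correctly so that one recovers the uniform weight $1/\binom{m-1}{k-1}$. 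Once this ingredient is in place, the rest is bookkeeping: the factor $1+m$ comes from the number of summands in the reverse Hölder inequality (ii), and the exponent $1/\binom{m-1}{k-1}$ from the multiplicity of each $i\in[m]$ in $\collS_k$.
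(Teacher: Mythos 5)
Your proof is correct and follows essentially the same route as the one the paper invokes: the paper does not reproduce an argument for Theorem~\ref{thm:frac-Pl\"unnecke} but attributes it to Bobkov--Madiman \cite{BM12:jfa}, and the three ingredients you identify (the Shannon bound $h\le\log|\mathrm{supp}|$, the reverse H\"older estimate of \cite[Proposition~3.4]{BM12:jfa}, and the fractional submodularity of $\setS\mapsto h(X+\sum_{i\in\setS}Y_i)$ coming from \cite{Mad08:itw}) are precisely the ones used there, with the bookkeeping worked out exactly as you describe. The only point worth spelling out a bit more carefully is the deduction of fractional subadditivity from submodularity: this holds because the set function $g(\setS)=h(X+\sum_{i\in\setS}Y_i)-h(X)$ is not only submodular (by Madiman's three-variable inequality applied with $X'=X+\sum_{i\in\setS\cap\setT}Y_i$) but also monotone and vanishes at $\emptyset$, and monotone submodular functions vanishing at $\emptyset$ are fractionally subadditive over any fractional cover, in particular over the uniform fractional partition on $\collS_k$ with weights $1/\binom{m-1}{k-1}$.
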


In particular, by choosing $k=1$, one already obtains an interesting inequality for volumes of Minkowski sums.

\begin{cor}\label{cor:Pl\"unnecke}
Let $A$ and $B_{1},\ldots, B_{m}$ be convex bodies in $\RL^{n}$. Then
$$
|A|^{m-1}\bigg|A+\sum_{i=1}^m B_i \bigg|
\leq (1+m)^n \prod_{i=1}^m |A+B_i|  .
$$
\end{cor}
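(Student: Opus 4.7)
The plan is simply to specialize Theorem~\ref{thm:frac-Pl\"unnecke} to the case $k=1$, which is the coarsest meaningful choice. First, I would set $\collS_1=\{\{1\},\ldots,\{m\}\}$ and, for each singleton $\{i\}$, define the constant
\[
c_{\{i\}}=\left(\frac{|A+B_i|}{|A|}\right)^{1/n},
\]
so that the hypothesis $|A+B_i|^{1/n}\le c_{\{i\}}|A|^{1/n}$ holds with equality and is trivially valid.

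Next, I would plug these $c_{\{i\}}$ into the conclusion of Theorem~\ref{thm:frac-Pl\"unnecke}. Since $\binom{m-1}{k-1}=\binom{m-1}{0}=1$ when $k=1$, the exponent on the product collapses, yielding
\[
\Bigl|A+\sum_{i=1}^m B_i\Bigr|^{1/n}\le (1+m)\prod_{i=1}^m c_{\{i\}}\,|A|^{1/n}=(1+m)\prod_{i=1}^m\left(\frac{|A+B_i|}{|A|}\right)^{1/n}|A|^{1/n}.
\]

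Finally, I would raise both sides to the $n$-th power, obtaining
\[
\Bigl|A+\sum_{i=1}^m B_i\Bigr|\le (1+m)^n\,\frac{\prod_{i=1}^m|A+B_i|}{|A|^{m-1}},
\]
which after clearing the denominator is the desired inequality. There is no real obstacle here: the entire content lies in Theorem~\ref{thm:frac-Pl\"unnecke}, and the corollary is a one-line specialization, so the only thing to watch is bookkeeping of the exponents and verifying that $\binom{m-1}{0}=1$ makes the product exponent equal to one.
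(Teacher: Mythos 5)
Your proposal is correct and is exactly the paper's intended derivation: the paper explicitly introduces Corollary~\ref{cor:Pl\"unnecke} as the $k=1$ specialization of Theorem~\ref{thm:frac-Pl\"unnecke}, and your bookkeeping of the constants $c_{\{i\}}$, the binomial $\binom{m-1}{0}=1$, and the final $n$-th power/clearing of the denominator matches what is implicitly done there.
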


%If, in addition, one takes each $B_i$ to be $A$, then one has from $|A+A|\leq c^n |A|$ 
%(obviously possible only for $c\geq 2$) that $|m\cdot A|\leq (mc^m)^n  |A|$, or that the 
%``average''
%\be\label{eq:pr-same}
%\bigg| \frac{m\cdot A}{m}\bigg|^\nth \leq c^m |A|^\nth . 
%\ee
%Recall that it is known (Shapley-Folkmann?) that the set on the left converges in
%the Hausdorff metric to the convex hull of $A$, namely $A$, as $m\ra \infty$. Thus the inequality
%\eqref{eq:pr-same} suggests that a stability version of the Brunn-Minkowski inequality
%for convex sets should hold in the form of an upper bound for the volume of the convex
%hull of $A$ in terms of the volume of $A$ (when $|A+A|$ is not too large), 
%falls short of providing 

Thus, one may think of Corollary~\ref{cor:Pl\"unnecke} as providing yet another continuous analogues of the Pl\"unnecke-Ruzsa inequalities
in the context of volumes of convex bodies in Euclidean spaces (compare with (\ref{eq:ruzvol})), where going from the discrete to the continuous
incurs the extra factor of $(1+m)$, but one does not need to bother with taking subsets of the set $A$.
%QUESTION: Can one get rid of the $(1+m)$ factor (for zonoids at least) by using the ``tensorization trick''?
In particular, with $m=2$, one gets ``log-submodularity of volume up to an additive term'' on convex bodies.

\begin{cor}\label{cor:Pl\"unnecke2}
Let $A$ and $B_{1}, B_{2}$ be convex bodies in $\RL^{n}$. Then
\be\label{3body-3n}
|A|\,|A+B_1+B_2 | 
\leq 3^n  |A+B_1| \,|A+B_2|.
\ee
\end{cor}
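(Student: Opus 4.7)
The plan is to observe that Corollary~\ref{cor:Pl\"unnecke2} is the immediate specialization of Corollary~\ref{cor:Pl\"unnecke} to the case $m=2$. Indeed, substituting $m=2$ into the inequality
$$|A|^{m-1}\bigg|A+\sum_{i=1}^m B_i \bigg|\leq (1+m)^n \prod_{i=1}^m |A+B_i|$$
gives the exponent $m-1 = 1$ on $|A|$, the sum $B_1+B_2$ inside the second factor on the left, the constant $(1+2)^n = 3^n$ on the right, and the product $|A+B_1|\,|A+B_2|$ as the final factor. There is nothing more to do: the inequality is literally the $m=2$ case.

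So the ``proof'' amounts to one line: apply Corollary~\ref{cor:Pl\"unnecke} with $m=2$. There is no obstacle to overcome here, since all the analytical work lies upstream in Theorem~\ref{thm:frac-Pl\"unnecke} (which invokes reverse H\"older inequalities for convex measures from \cite{BM12:jfa} and the submodularity of entropy of sums from \cite{Mad08:itw}) and in the derivation of Corollary~\ref{cor:Pl\"unnecke} via the choice $k=1$ in that theorem. The only ``work'' worth flagging for the reader is the interpretation: rewriting the inequality in the form $\log|A+B_1+B_2| + \log|A| \leq \log|A+B_1| + \log|A+B_2| + n\log 3$ highlights why the authors describe this as log-submodularity of volume on convex bodies up to an additive $n\log 3$ correction, which is the comparison point for the sharper bounds on $c_n$ pursued in the remainder of Section~\ref{sec:Pl\"unnecke}.
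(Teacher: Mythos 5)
Your proposal is correct and matches the paper exactly: the paper presents Corollary~\ref{cor:Pl\"unnecke2} as the immediate $m=2$ specialization of Corollary~\ref{cor:Pl\"unnecke}, with no additional argument. The brief remark about the log-submodularity interpretation is also consistent with the paper's own framing of this inequality.
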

%By taking logarithms, we may rewrite this as \ben \log |A+B_1+B_2| + \log |A| \leq n\log 3 + \log |A+B_1|+\log |A+B_2| .\een
Unfortunately the dimension-dependent additive term is a hindrance that one would like to remove or improve, which is the purpose of the next section.

\begin{rem} We notice that in the case where $B_1=B_2=B$ the inequality holds with constant $1$:
$$
|A|\,|A+B+B |
\leq   |A+B|^2.
$$
by the Brunn-Minkowski inequality. In the next section, we shall see that it is no longer true for $B_1\neq B_2$. Moreover, as we will see in Lemma \ref{lm:example},
the above inequality is not true with any absolute constant if we only assume that the sets $A$ and $B_1$ are compact, which  exposes an essential difference of this inequality with (\ref{eq:ruzvol}).
\end{rem}
%Let us note that T.~Tao \cite{Tao08} has previously developed a continuous analogue
%of Freiman's theorem, which is related to the Pl\"unnecke-Ruzsa inequalities. Specifically, 
%\cite[Proposition~7.1]{Tao08} asserts that if
%$A$ is an open bounded non-empty subset of $\R^n$ such that $|A+A| \leq K |A|$ 
%for some $K \geq 2^n$, then
%there exists an $\epsilon> 0$ and a set $P$ which is the sum of $O_K(1)$ arithmetic
%progressions in $\R^n$ such that $A\subset P+B(0,\epsilon)$ and $|P+B(0,\epsilon)|\approx_K |A|$.
%However, this kind of continuous analogue is different in nature from the one we propose above,
%since it focuses on algebraic rather than convex structure. 
%Another notable continuous analogue of Freiman's theorem is developed in
%the more general context of locally compact, abelian groups by T.~Sanders \cite{San09}.

%\subsubsection{B\'ezout inequality}
%
%Given convex sets $A$ and $K_1,\dots, K_r$ leavig in some class of convex sets, Soprunov-Zvavitch \cite{SZ16, SZ17:corr} studied the best constant $c_r$ such that 
%$$
%|A|^{r-1}V(A[n-r], K_1,\dots,K_r)\le c_r\prod_{i=1}^rV(A[n-1],K_i).
%$$
%For $r=2$, this gives 
%$$
%|A|V((A[n-2], K_1,K_2)\le c_2 V(A[n-1],K_1)V(A[n-1],K_2).
%$$
%
%They proved that:
%\begin{enumerate}
%\item if $A$ is a simplex then the inequality holds with constant one, whatever the $K_i$'s are.
%\item if $K_1,\dots,K_r$ are zonoids then the inequality holds with constant $c=\frac{r^r}{r!}$.
%\end{enumerate}
%
%{\color{red} TO BE EXPANDED}

%\section{Improved dimension-dependent constants}

\subsection{Improved upper bounds in general dimension}
\label{ss:gen-ub}

In this section, we will present an improvements in the constant $3^n$ in the three body inequality from Corollary~\ref{cor:Pl\"unnecke2}. We define the constant $c_n$ by \eqref{cn-def}, or equivalently as the infimum of the constants $c>0$ such that, for every convex compact sets $A,B,C$ in $\RL^n$,
\ben
|A|\,|A+B+C| \le c |A+B|\,|A+C|.
\een
We recall that $\varphi=(1+\sqrt{5})/2$ denotes the golden ratio.

\begin{thm}\label{thm:ub} Let $n\ge2$. Then, one has $1=c_2\le c_n\le \varphi^n$, i.e., for every convex compact sets  $A, B, C \subset \RL^n$, 
$$
|A|\,|A+B+C| \le \varphi^{n} |A+B|\,|A+C|.
$$
%One may ask if the preceding inequality is true with constant $1$ when $A$ is a simplex.
\end{thm}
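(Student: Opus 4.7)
The lower bound $c_n \ge 1$ is immediate (take $B = C = \{0\}$, giving ratio equal to $1$), and $c_2 = 1$ is deferred to Section~\ref{ss:ub34}. The real content is the upper bound $c_n \le \varphi^n$. My strategy is to reduce it to a purely combinatorial estimate on binomial coefficients by applying Xiao's local Alexandrov--Fenchel inequality \eqref{eq:jxiao} term-by-term in the mixed-volume expansion, and then to recognize the Fibonacci structure that forces the appearance of $\varphi^n$.

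First I would expand everything by multilinearity of mixed volumes: using \eqref{eq:mvf},
\[
|A+B+C| = \sum_{\substack{b,c \ge 0 \\ b+c \le n}} \binom{n}{b,c,n-b-c}\, V(A[n-b-c], B[b], C[c]),
\]
and the Steiner formula \eqref{eq:ste} gives
\[
|A+B| \cdot |A+C| = \sum_{b,c = 0}^n \binom{n}{b}\binom{n}{c}\, V(A[n-b], B[b])\, V(A[n-c], C[c]).
\]
Multiplying the first identity by $|A|$ and applying \eqref{eq:jxiao} to each term with $j=b$, $m=c$, it suffices by non-negativity of mixed volumes to prove the coefficient inequality
\[
\binom{n}{b,c,n-b-c} \min\!\left(\binom{n}{b}, \binom{n}{c}\right) \le \varphi^n \binom{n}{b}\binom{n}{c}
\]
for all $b,c \ge 0$ with $b + c \le n$.

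By symmetry in $(B,C)$, I may assume $b \le c$. The constraints $b \le c$ and $b + c \le n$ imply $|b - n/2| \ge |c - n/2|$, hence $\binom{n}{b} \le \binom{n}{c}$, so the minimum equals $\binom{n}{b}$. After cancellation the inequality reduces to
\[
\binom{n-c}{b} \le \varphi^n.
\]
To prove this, split into two regimes. If $c \ge n/3$, then $\binom{n-c}{b} \le 2^{n-c} \le 2^{2n/3}$, which is at most $\varphi^n$ since $2^{2/3} = \sqrt[3]{4} < \varphi$. If $c < n/3$, then $b \le c < (n-c)/2$ lies left of the mode of $k \mapsto \binom{n-c}{k}$, so $\binom{n-c}{b} \le \binom{n-c}{c}$; using the classical diagonal-sum identity
\[
F_{n+1} = \sum_{k \ge 0} \binom{n-k}{k}
\]
for Fibonacci numbers, this is bounded by $F_{n+1}$, and a short induction using $\varphi^2 = \varphi + 1$ (with base cases $F_1 = 1 = \varphi^0$, $F_2 = 1 \le \varphi$) yields $F_{n+1} \le \varphi^n$.

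\textbf{Main obstacle.} The conceptual step, which is what produces the improvement from $3^n$ in Corollary~\ref{cor:Pl\"unnecke2} down to $\varphi^n$, is recognizing that the right tool is Xiao's sharp inequality \eqref{eq:jxiao} (rather than the entropic Pl\"unnecke--Ruzsa machinery of \cite{BM12:jfa}), and that the resulting binomial comparison inherits the Fibonacci structure on anti-diagonals of Pascal's triangle. Once these ingredients are in place, the remaining case split $c \gtrless n/3$ is routine but essential, because neither bound ($2^{n-c}$ vs.\ $\binom{n-c}{c}$) dominates the other uniformly.
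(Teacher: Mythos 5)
Your proof is correct, and it follows the paper's overall strategy: expand both sides via mixed volumes, compare term by term, and apply Xiao's inequality \eqref{eq:jxiao} to reduce to a binomial comparison. Your reduction to $\binom{n-c}{b}\le\varphi^n$ (for $b\le c$, $b+c\le n$) is exactly the paper's bound $\min\bigl(\binom{n-j}{m},\binom{n-m}{j}\bigr)\le d_n$ after the observation that for $j\le m$ with $j+m\le n$ the minimum is $\binom{n-m}{j}$.

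Where you genuinely diverge is in how this binomial estimate is finished. The paper invokes an explicit Stirling bound $\binom{p}{q}\le p^p/\bigl((p-q)^{p-q}q^q\bigr)$, sets $j=yn$, $m=xn$, and then does a two-variable optimization of $(1-x)^{1-x}/\bigl((1-x-y)^{1-x-y}y^y\bigr)$ over the relevant region, finding the maximum $\varphi$ at $x=(5-\sqrt5)/10$. You instead split on whether $c\gtrless n/3$, use the crude bound $\binom{n-c}{b}\le 2^{n-c}\le 2^{2n/3}<\varphi^n$ when $c$ is large, and for small $c$ observe that $\binom{n-c}{b}\le\binom{n-c}{c}\le F_{n+1}\le\varphi^n$ via the Fibonacci diagonal identity and the elementary induction $F_{n+2}=F_{n+1}+F_n\le\varphi^n+\varphi^{n-1}=\varphi^{n+1}$. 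This is more elementary (no Stirling, no calculus) and, I think, conceptually clearer: it actually explains why the golden ratio shows up, as the growth rate of the Fibonacci numbers sitting on the anti-diagonals of Pascal's triangle, whereas in the paper's proof $\varphi$ emerges as the solution of an optimization with no obvious interpretation. One small point: the claim $c_2=1$ is not deferred in the paper to Section~\ref{ss:ub34} — it is established inside the proof of the theorem by treating the case $j=m=1$ via Fenchel's local inequality \eqref{alexloc}; in fact your own reduction already yields it, since for $n=2$ the only nontrivial pair is $(b,c)=(1,1)$, giving $\binom{1}{1}=1$, so $d_2=1$ and hence $c_2\le 1$.
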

\begin{proof}
Observe that, taking $B=C=\{0\}$ we get $c_n\ge1$.
We  apply (\ref{eq:mvf}) to get 
\begin{align*}
|A|\,|A+B+C|=&\sum_{k+j+m=n} {n \choose k,j,m} |A| V(A[k], B[j],C[m])\\=&\sum_{0\le j+m\le n} {n \choose j,m,n-j-m} |A| V(A[n-j-m], B[j],C[m]),
\end{align*}
$$
|A+B|\,|A+C|=\sum_{j=0}^n \sum_{m=0}^n {n \choose j} {n \choose m}   V(A[n-j], B[j]) V(A[n-m], C[m]).
$$
 The comparison of the above sums term by term shows that $c_n\le d_n$ where $d_n$ satisfies
%$$
%  {n \choose k} |A| V(A[k], (B+C)[n-k]) \le c_n\sum_{j=k}^n {n \choose j}  {n \choose n+k-j} V(A[j], B[n-j]) V (A[n+k-j], C[j-k])  
%$$
% or
% \begin{align*}
% \sum_{m=0}^{n-k} {n \choose k} {n-k \choose m} |A|& V(A[k], B[n-k-m], C[m])  \\&\le  c_n\sum_{j=k}^n {n \choose j}  {n \choose n+k-j} V(A[j], B[n-j]) V (A[n+k-j], C[j-k])
% \end{align*}
%or
%\begin{align}
% \sum_{m=0}^{n-k} {n \choose k} {n-k \choose m} &|A| V(A[k], B[n-k-m], C[m]) \label{eq:combk} \\
%\le &c_{n}\sum_{m=0}^{n-k} {n \choose m+k}  {n \choose n-m} V(A[m+k], B[n-m-k]) V (A[n-m], C[m]). \nonumber
%\end{align}
%We will again consider  the above inequality term by term, i.e. fix $k \in 0, \dots, n$ and $m \in 0, \dots, n-k$, our goal to find $c_n$ such that 
\begin{equation} 
 |A| V(A[n-j-m], B[j],C[m]) \le d_n\frac{ {n \choose j} {n \choose m}}{ {n \choose j,m,n-j-m}}   V(A[n-j], B[j]) V(A[n-m], C[m]).
 \end{equation}
  Rewriting the above in a  more symmetric way we get, for $m, j \ge 0$ and $m+j\le n$:
\begin{eqnarray}\label{eq:conj-mixed}
\frac{|A|}{n!}\frac{V(A[n-j-m], B[j], C[m])}{(n-j-m)!}\le d_n\frac{V(A[n-j],B[j])}{(n-j)!}\frac{V(A[n-m],C[m])}{(n-m)!}.
\end{eqnarray}
Notice that for $m=0$ or $j=0$, (\ref{eq:conj-mixed}) trivially holds for any  $d_n\ge 1$. 
Using inequality (\ref{eq:jxiao})  we get that $d_n$ will satisfy inequality (\ref{eq:conj-mixed}) as long as 
%$$
% \min\left( {n \choose m} , {n \choose j} \right) \le d_n \frac{n! (n-m-j)!}{(n-m)!(n-j)!} 
%$$
%or
%$$
% \min\left( \frac{1}{(n-m)! m!}, \frac{1}{(n-j)! j!} \right) \le d_n \frac{(n-m-j)!}{(n-m)!(n-j)!} 
%$$
%or
\begin{equation}\label{est_b}
 \min\left\{ {n-j  \choose m}, {n-m \choose j} \right\} \le d_n. 
\end{equation}
Note that the above is true with constant $d_n=1$ if $m+j=n$. %The estimate do  also give a polynomial bound in a number of other choice of $m,j$.
% Using  an elementary bound on binomial coefficients we get 
%$$
%d_n \ge \min\{ 2^{n-j}, 2^{n-m}\}.
%$$
%thus, using $m, j \ge 1$, the above is true with $c_n=2^{n-1}$. 
%Thus our main goal is to study
%\begin{eqnarray}
%\frac{|A|}{n!}\frac{V(A[n-m-j], B[m], C[j])}{(n-m-j)!}\le c_n\frac{V(A[n-m],B[m])}{(n-m)!}\frac{V(A[n-j],C[j])}{(n-j)!}.
%\end{eqnarray}
%for $m,j \ge 1$ and $m+j \le n-1$. 
We also note that, if $m=j=1$, then the required inequality (\ref{eq:conj-mixed}) becomes
\begin{eqnarray}
|A|V(A[n-2], B, C)\le d_n\frac{n}{n-1} V(A[n-1],B) V(A[n-1],C).
\end{eqnarray}
Using (\ref{alexloc}), we see that in this case it is enough to select   $d_n \ge \frac{2(n-1)}{n}$. In particular, we get that $c_2=d_2=1$. For the more general case, we can provide a bound for  $d_n^{1/n}$ using Stirling's approximation formula. Indeed,
$$
{p \choose q}\le \frac{p^p}{(p-q)^{p-q} q^q} e^{\frac{1}{12p}-\frac{1}{12(p-q)+1}-\frac{1}{12q+1}} \sqrt{\frac{p}{2\pi (p-q)q}}
$$
$$
{p \choose q}\le \frac{p^p}{(p-q)^{p-q} q^q} e^{\frac{1}{12p}-\frac{12p+2}{(12(p-q)+1)(12q+1)}} \sqrt{\frac{p}{2\pi (p-1)}} \le \frac{p^p}{(p-q)^{p-q} q^q}.
$$
Next, let $j=yn$ and $m=xn$, where $x,y \ge 0$, $x+y \le 1,$ then it is sufficient for $d_n$ to satisfy
$$
\max_{\substack{x, y \ge 0\\ x+y\le 1}}\min\left\{\frac{(1-y)^{1-y}}{(1-x-y)^{1-x-y}x^x}, \frac{(1-x)^{1-x}}{(1-x-y)^{1-x-y}y^y} \right\}\le d_n^{1/n}.
$$
Without loss of generality we may assume that $|x-1/2| \le |y-1/2|$ and thus $(1-x)^{1-x}x^x \le (1-y)^{(1-y)}y^y$. Our next goal is to provide an upper estimate for 
$$
\max \frac{(1-x)^{1-x}}{(1-x-y)^{1-x-y}y^y} ,
$$
where the maximum is taken over a set 
\begin{align*}
    \Omega&=\{(x, y)\in\R_+^2:  x+y\le 1, |1/2-x| \le |1/2-y|\}\\
    &=\{(x,1-x); 0\le x\le 1/2\}\cup\{(x,y)\in\R_+^2;  y \le \min(x,1-x)\}.
\end{align*}
We note that the function $y\mapsto(1-x-y)^{1-x-y}y^y$ is decreasing for $y \in [0, (1-x)/2]$ and increasing on $[(1-x)/2, (1-x)]$. So we may consider two cases, comparing $x$ and $(1-x)/2$. Next,
$$
\max_{\Omega \cap \{x\in [0,1/3]\}} \frac{(1-x)^{1-x}}{(1-x-y)^{1-x-y}y^y} = \max_{[0,1/3]} \frac{(1-x)^{1-x}}{(1-2x)^{1-2x}x^x} = \frac{1+\sqrt{5}}{2}.
$$
The last equality follows from the fact that the maximum is achieved when
$
\frac{(1-2x)^2}{(1-x)x}=1, 
$
i.e. at $x=(5-\sqrt{5})/10$. Finally
$$
\max_{\Omega \cap \{x\in [1/3,1]\}} \frac{(1-x)^{1-x}}{(1-x-y)^{1-x-y}y^y} \le \max_{x\in [1/3,1]} \frac{(1-x)^{1-x}}{((1-x)/2)^{1-x}} = 2^{2/3} < \frac{1+\sqrt{5}}{2}.
$$
% if $A$ is simplex (in general the inequality is true for simplex with constant one IF $m$ or $j$ is 1).
%{\bf I do feel that one can iterate the inequality
%\begin{eqnarray}
%V(A,L)V(A, B, C, L)\le 2 V(A,B,L) V(A,C,L).
%\end{eqnarray}
%for different list of bodies $L$ and provide a nice general nice approximation of constant $c_n$. For example this is one of the ways to get  $2$ in $\R^4$ -- BUT I WAS NOT ABLE TO DO IT}
\end{proof}

%\begin{rem} We note that a more careful study of condition (\ref{est_b}) gives that $c_n\le d_n\le (1.76)^n,$ for larger enough $n$.\end{rem}

 The next proposition  gives a different proof of (\ref{eq:ruzvol1})  in the special case of convex sets and, we hope, gives yet another example of how the methods of mixed volumes as well as the B\'ezout type inequality (\ref{eq:jxiao}) can be applied in this context.
\begin{prop}\label{lem:weak-3body}
Let  $A, B_1, \dots, B_m$ be convex bodies in $\RL^n$, then
\begin{equation}\label{eq:threesimple}
|A|\,|B_1+\dots +B_m| \le \prod\limits_{i=1}^m|A+B_i|,
\end{equation}
with equality if and only if $|A|=0$.
\end{prop}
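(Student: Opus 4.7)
The plan is to expand both sides of (\ref{eq:threesimple}) in mixed volumes using the multilinear formula (\ref{eq:mvf}), and then to compare the resulting polynomials term by term. Write
\[
|A|\,|B_1+\cdots+B_m| \;=\; \sum_{k_1+\cdots+k_m=n}\binom{n}{k_1,\ldots,k_m}\,|A|\,V(B_1[k_1],\ldots,B_m[k_m])
\]
and
\[
\prod_{i=1}^{m}|A+B_i| \;=\; \sum_{(l_1,\ldots,l_m)\in\{0,\ldots,n\}^m}\,\prod_{i=1}^{m}\binom{n}{l_i}\,V(A[n-l_i],B_i[l_i]).
\]
I would match each LHS summand indexed by $(k_1,\ldots,k_m)$ with $\sum k_i=n$ to the single RHS summand obtained by setting $l_i=k_i$. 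This injection of LHS summands into RHS summands leaves many unpaired RHS terms, all of which are nonnegative since mixed volumes of convex bodies are nonnegative; the unpaired term with $l_1=\cdots=l_m=0$ equals $|A|^m$ and, provided $n\ge 1$, is never matched by any LHS summand.

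The heart of the argument is the term-by-term comparison. After cancellation of the multinomial and binomial prefactors this reduces, for each partition $(k_1,\ldots,k_m)$ of $n$, to an inequality tying $|A|$, the mixed volume $V(B_1[k_1],\ldots,B_m[k_m])$, and the product $\prod_i V(A[n-k_i],B_i[k_i])$ together. For $m=2$ this is precisely Xiao's Bézout-type inequality (\ref{eq:jxiao}) in the boundary case $n-j-m'=0$, which gives exactly the required constant $\binom{n}{k_1}=\binom{n}{k_2}$. For $m\ge 3$ the plan is to derive an $m$-body extension by iterating (\ref{eq:jxiao}): at each stage I would group two of the bodies as a single convex body, use multilinearity of the mixed volume to rewrite the grouped argument, and then invoke the two-body case to reduce $m$ to $m-1$.

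Combining these term-wise estimates, and discarding the nonnegative unmatched contributions on the right, would then yield the inequality. The leftover $|A|^m$ term forces strict inequality whenever $|A|>0$; conversely, both sides vanish when $|A|=0$, giving the stated equality characterization.

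The main obstacle is the iteration of Xiao for $m\ge 3$: one has to carefully track both the combinatorial constants and the powers of $|A|$ that accumulate at each grouping step, so as to match exactly the coefficient demanded by the term-by-term comparison. As a parallel approach I would attempt an induction on $m$ that absorbs $B_{m-1}+B_m$ into a single convex body and uses the two-body case in tandem with the local Alexandrov--Fenchel inequality (\ref{alexloc}) to control the resulting cross terms; I expect the bookkeeping of $|A|$ exponents to be the delicate point either way.
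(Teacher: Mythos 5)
Your treatment of the case $m=2$ is correct and essentially identical to the paper's: both expand $|A|\,|B_1+B_2|$ and $|A+B_1|\,|A+B_2|$ via \eqref{eq:mvf}, match each left-hand term to the diagonal term $l_i=k_i$ (so $l_1+l_2=n$) on the right, observe that the matched inequality is exactly \eqref{eq:jxiao} in the boundary case $n-j-m=0$ with constant $\binom{n}{k_1}=\binom{n}{k_2}$, and discard the remaining nonnegative terms; the unmatched term $|A|^2$ gives strictness when $|A|>0$. (A small caveat you share with the paper: if $|A|=0$ the right-hand side $\prod_i|A+B_i|$ is still positive, so it is not true that ``both sides vanish''; only the direction ``equality implies $|A|=0$'' is actually established.)

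The gap is in $m\ge 3$. The term-by-term inequality you would need,
\[
\binom{n}{k_1,\ldots,k_m}\,|A|\,V(B_1[k_1],\ldots,B_m[k_m])\;\le\;\prod_{i=1}^m\binom{n}{k_i}V(A[n-k_i],B_i[k_i]),
\]
is not homogeneous under dilation: replacing every body by its $\lambda$-dilate multiplies the left side by $\lambda^{2n}$ and the right side by $\lambda^{mn}$, so letting $\lambda\to0$ shows it fails whenever the left side is positive; no iteration of Xiao's two-body inequality can produce it. The same dilation test (take $A=B_1=\dots=B_m=\varepsilon B_2^n$ with $\varepsilon$ small) shows the displayed statement itself cannot hold as written for $m\ge3$; comparing with \eqref{eq:ruzvol1}, which this proposition is meant to reprove, the left-hand side should carry $|A|^{m-1}$. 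With that correction your matched-term inequality becomes homogeneous of degree $mn$ and is a genuine $m$-body B\'ezout-type statement, but it still does not follow from the quoted form of \eqref{eq:jxiao} by the grouping you sketch: $V(\ldots,(B_{m-1}+B_m)[k],\ldots)$ expands by multilinearity into a sum of cross terms rather than the single term you need, and the powers of $|A|$ must be apportioned among the successive two-body applications. The paper avoids all of this by proving only the $m=2$ case through the mixed-volume expansion and reducing the general case to it by induction on $m$; your proposal establishes the two-body case but leaves the multi-body case genuinely open.
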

\begin{proof}
By induction, the general case follows immediately from the case $m=2$, so we assume $m=2$ and denote $B_1=B$ and $B_2=C$. 
The inequality follows from the proof of Theorem \ref{thm:ub} and the observation that decomposing the left and right hand sides of the (\ref{eq:threesimple}) we need to show that
$$
\sum_{m=0}^{n} {n \choose m} |A| V(B[n-m], C[m])  
\le \sum_{j=0}^n \sum_{m=0}^n {n \choose j} {n \choose m}   V(A[n-j], B[j]) V(A[n-m], C[m]).
$$
It turns out that it is enough to consider only the terms with $m+j=n$ on the right hand side, i.e. to show that 
$$
 \sum_{m=0}^{n} {n \choose m}  |A| V(B[n-m], C[m])  
\le \sum_{m=0}^{n}  {n \choose n-m} {n \choose m}  V(A[m], B[n-m]) V (A[n-m], C[m]),
$$
which is true term by term by using  (\ref{eq:jxiao}) with $m+j=n$. Now assume that there is equality. This implies that the term $j=m=0$ in the above double sum must vanish, i.e
$|A|=0$. 
\end{proof}

\subsection{Improved constants in dimensions 3 and 4}
\label{ss:ub34}

Theorem \ref{thm:ub} gives an optimal bound of  $1$ for the three body inequality in dimension $2$. Next, we will show how we can get better bounds for $c_n$ in dimension $3$ and $4$. 

\begin{thm}\label{th:r3} Let $A, B, C$ be convex compact sets in $\RL^3$ then
$$
|A|\,|A+B+C| \le \frac{4}{3} |A+B|\,|A+C|
$$
and the constant is best possible: $c_3=\frac{4}{3}$. Moreover, if $A$ is a simplex, then
$$
|A|\,|A+B+C| \le |A+B|\,|A+C|.
$$
\end{thm}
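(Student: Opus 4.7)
The plan is to expand both sides of the inequality via the mixed volume formula (\ref{eq:mvf}) and compare term by term, as in the proof of Theorem~\ref{thm:ub}. In dimension $n=3$ the expansions turn out to be tight enough that the local Alexandrov--Fenchel inequality (\ref{alexloc}) delivers exactly the constant $4/3$. Writing
\[
|A|\,|A+B+C|=\sum_{j+m\le 3}\binom{3}{j,m,3-j-m}|A|\,V(A[3-j-m],B[j],C[m])
\]
and
\[
|A+B|\,|A+C|=\sum_{j,m=0}^{3}\binom{3}{j}\binom{3}{m}V(A[3-j],B[j])\,V(A[3-m],C[m]),
\]
I will show that each LHS term with $(j,m)$, $j+m\le 3$, is bounded by $\tfrac{4}{3}$ times the RHS term with the same indices (the extra RHS terms, with $j+m>3$, are non-negative and can be dropped).

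When $j=0$ or $m=0$ the LHS and RHS terms coincide identically, so the ratio is $1$. When $(j,m)\in\{(2,1),(1,2)\}$, Xiao's inequality (\ref{eq:jxiao}) gives
\[
3|A|V(B[2],C)\le 3\min\!\left(\tbinom{3}{2},\tbinom{3}{1}\right)V(A,B[2])V(A[2],C)=9\,V(A,B[2])V(A[2],C),
\]
which matches the RHS coefficient $\binom{3}{2}\binom{3}{1}=9$ and again gives ratio $1$. The decisive case is $(j,m)=(1,1)$, where the LHS contributes $6|A|V(A,B,C)$: here the local Alexandrov--Fenchel inequality (\ref{alexloc}) gives
\[
6|A|V(A,B,C)\le 12\,V(A[2],B)V(A[2],C)=\tfrac{4}{3}\cdot 9\,V(A[2],B)V(A[2],C),
\]
which is exactly the RHS $(1,1)$-term multiplied by $4/3$. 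Since $1\le\tfrac{4}{3}$, summing over all indices yields the inequality $|A|\,|A+B+C|\le\tfrac{4}{3}|A+B|\,|A+C|$. For the simplex refinement, the remark following (\ref{alexloc}) states that the factor $2$ in Fenchel's inequality can be improved to $1$ when $A$ is a simplex; the $(1,1)$-ratio then drops to $6/9<1$, and summing produces the same inequality with constant $1$.

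The last and most delicate point is the sharpness assertion $c_3\ge\tfrac{4}{3}$, which requires producing a concrete sequence of convex bodies realising this constant. The main obstacle is that in the preceding argument the loss $4/3$ is incurred only through the single $(1,1)$-coefficient, via (\ref{alexloc}); an extremising example must therefore (i) force the local Alexandrov--Fenchel inequality to be essentially saturated on $(A,B,C)$, and, simultaneously, (ii) scale $B$ and $C$ so that the $(1,1)$-contribution dominates the mixed-volume expansions of both $|A|\,|A+B+C|$ and $|A+B|\,|A+C|$, making the remaining terms asymptotically negligible. Constructing a family in which both conditions hold together (e.g.\ by degenerating $A$ toward a body for which (\ref{alexloc}) is tight, and taking $B,C$ to be two suitably calibrated orthogonal segments) is the technical heart of this part.
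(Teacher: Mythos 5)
Your proof of the upper bound $c_3\le\tfrac43$ and of the simplex refinement is correct and follows essentially the same route as the paper: expand $|A|\,|A+B+C|$ and $|A+B|\,|A+C|$ via (\ref{eq:mvf}), compare term by term, handle $(j,m)$ with $jm=0$ trivially, handle $(2,1)$ and $(1,2)$ with Xiao's inequality (\ref{eq:jxiao}) at ratio $1$, and isolate $(1,1)$ as the only place a loss occurs, which (\ref{alexloc}) bounds by the factor $2\cdot\tfrac{n-1}{n}=\tfrac43$ (or by $1$ when $A$ is a simplex, using the simplex case of (\ref{alexloc}) from \cite{SZ16}). Your arithmetic in each case checks out.

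The gap is the sharpness claim $c_3\ge\tfrac43$, which you explicitly leave as a sketch. As the statement asserts $c_3=\tfrac43$, this direction is genuinely required. The paper defers it to Section~\ref{ss:PR-LB}, where it is obtained not by directly saturating (\ref{alexloc}) but by a scaling argument: take $B=U$ and $C=V$ unit-volume bodies in complementary orthogonal subspaces of dimensions $n-i$ and $n-j$, set $A=tK$, divide the Pl\"unnecke--Ruzsa inequality by $t^{n+k}$ with $k=i+j-n$ and let $t\to 0$, which yields the projection inequality (\ref{projcontr}). Specializing to $i=j=n-1$ (so $B,C$ are orthogonal segments, as you guessed) gives
\[
|P_{\{u,v\}^\perp}K|\,|K|\le c_n\,|P_{u^\perp}K|\,|P_{v^\perp}K|,
\]
and the optimality of the constant $\tfrac{2(n-1)}{n}$ in this inequality, established in \cite{GHP02}, gives $c_n\ge 2-\tfrac2n$ and hence $c_3\ge\tfrac43$. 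So your instinct (degenerate $A$, take $B,C$ to be two orthogonal segments) is exactly the right mechanism, but you must carry out the limit $t\to 0$ to pass from the three-body volume inequality to the two-projection inequality, and you then need the external sharpness result from \cite{GHP02} for (\ref{eqGPH}) rather than an ad hoc tightness analysis of (\ref{alexloc}). Without this step the proof of $c_3=\tfrac43$ is incomplete.
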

\begin{proof} We follow the same strategy as in the proof of Theorem \ref{thm:ub} and arrive to the inequality (\ref{eq:conj-mixed}) with  $m, j \ge 0$ and $m+j\le 3$:
$$
\frac{|A|}{3!}\frac{V(A[3-j-m], B[j], C[m])}{(3-j-m)!}\le d_3\frac{V(A[3-j],B[j])}{(3-j)!}\frac{V(A[3-m],C[m])}{(3-m)!}.
$$
Again, the inequality is trivially true for $m=0$ or $j=0$  with any constant $d_3\ge1$. Thus,  we are left with the two following inequalities
% As before we expend the volume using the definition of the mixed volumes:
% \ben
% |A+B|\,|A+C|&=&(|A|+|B|+3V(A, B[2])+3V(B, A[2]))(|A|+|C|+3V(A, C[2])+3V(C, A[2]))\\
% &=& |A|^2+|A|\,|C|++3|A|V(A, C[2])+3|A|V(C, A[2])\\
% &+& |B|\,|A|+|B|\,|C|+3|B|V(A, C[2])+3|B|V(C, A[2])\\
% &+&3 |A| V(A, B[2])+3|C|V(A, B[2])+9V(A, C[2])V(A, B[2])+9V(C, A[2])V(A, B[2])\\
% &+& 3|A|V(B, A[2])+3|C|V(B, A[2])+9V(A, C[2])V(B, A[2])+9V(C, A[2])V(B, A[2])
% \een
% and
% \ben
% |A|\,|A+B+C|&=&|A|^2+|A|\,|B|+|A|\,|C|+3|A|V(A, B[2])+3|A|V(B,A[2])+3|A|V(A, C[2])\\&+&3|A|V(C,A[2])+ 3|A|V(C, B[2])+3|A|V(B,C[2])+6|A|V(A,B,C)
% \een
% Comparing the above quantities (even, for the moment,  with constant one instead of $4/3$) it is enough to show that
% \ben
%  &&3|A|V(C, B[2])+3|A|V(B,C[2])+6|A|V(A,B,C)\\ &\le& 9V(B,A[2])V(A,C[2]) + 9V(C,A[2])V(A,B[2])+ 9V(B,A[2])V(C, A[2]) 
% \een
\begin{equation}\label{eqXiao}
|A|V(C, B[2])\le3d_3V(B,A[2])V(A,C[2]) 
% \mbox{  and    }
% |A|V(B, C[2])\le 3c_3  V(C,A[2])V(A,B[2])
\end{equation}
\begin{equation}\label{eq:bez}
|A|V(A,B,C) \le \frac{3}{2} d_3 V(B,A[2])V(C, A[2])
\end{equation}
We note that the inequality (\ref{eqXiao}) with $d_3\ge1$ follows from (\ref{eq:jxiao}).
% \cite{SSZ18} and the idea of J. Xiao \cite{Xia19}: first notice that for any convex bodies, $K, L \subset \RL^n$, applying a translation we can guarantee that
% $$
% |K| L \subset n V(L, K[n-1])K,
% $$
% using above we immediately observe that for any convex bodies $L_1, \dots, L_n, K \subset \R^n$ we have
% \ben
% V(L_1,\dots, L_n)|K| &=&V(|K|L_1,\dots, L_n)\\ &\le& V(n V(L_1, K[n-1])K, L_2,\dots, L_n) \\&=& n V(L_1, K[n-1])V(K, L_2,\dots, L_n).
% \een
% Note that the above, with $n=3$, $K=A$, $L_1=C$, $L_2=L_3=B$ gives the first inequality of  (\ref{eqXiao})
% \ben
% V(C,B[2])|A|  \le  3 V(C, A[n-1])V(K, B[2]),
% \een
% the second inequality of (\ref{eqXiao}), follows similarly. 
Next we note that (\ref{eq:bez}) is true with $d_3 \ge 1$ when $A$ is a simplex (see \cite{SZ16}). The general case of  (\ref{eq:bez}) follows from  (\ref{alexloc}) with $c_3=4/3$. The proof that this bound is optimal is made in section \ref{ss:PR-LB}, where we, in particular, establish that $c_n\ge 2-\frac{2}{n}$. 
 %It was proved in \cite{BGL17} that 
% \ben
% |A|V(A,B,C) \le2 V(B,A[2])V(C, A[2]),
% \een
% which finishes the proof of the theorem.
\end{proof}

\begin{thm} \label{th:r4} Let $A, B, C$ be convex compact sets in $\RL^4$, then
$$
|A|\,|A+B+C| \le 2 |A+B|\,|A+C|.
$$
Thus $c_4\le2$. Moreover, if $A$ is a simplex, then 
$$
|A|\,|A+B+C| \le |A+B|\,|A+C|.
$$
\end{thm}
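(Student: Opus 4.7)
The plan is to imitate the proofs of Theorems~\ref{thm:ub} and~\ref{th:r3} with $n=4$. Using the mixed-volume expansion (\ref{eq:mvf}), both $|A|\,|A+B+C|$ and $|A+B|\,|A+C|$ decompose as sums over pairs $(j,m)$ with $j,m\ge 0$ and $j+m\le 4$, so it suffices to verify the term-by-term inequality (\ref{eq:conj-mixed}) with $n=4$:
\begin{equation*}
|A|\,V(A[4-j-m], B[j], C[m]) \le d_4\cdot\alpha(j,m)\cdot V(A[4-j],B[j])\,V(A[4-m],C[m]),
\end{equation*}
where $\alpha(j,m)=\frac{4!\,(4-j-m)!}{(4-j)!\,(4-m)!}$. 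A direct computation gives $\alpha(1,1)=\tfrac{4}{3}$, $\alpha(2,1)=\alpha(1,2)=2$, $\alpha(3,1)=\alpha(1,3)=4$, and $\alpha(2,2)=6$.

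When $j=0$ or $m=0$ the inequality is actually an equality, valid with $d_4=1$. For the remaining cases I would apply Xiao's inequality (\ref{eq:jxiao}), which produces the factor $\min(\binom{4}{j},\binom{4}{m})$ on the right-hand side. Comparing with $\alpha(j,m)$: for $(j,m)\in\{(2,2),(3,1),(1,3)\}$ the two factors coincide and $d_4=1$ is enough; for $(2,1)$ and $(1,2)$, Xiao gives $4$ against $\alpha=2$, forcing $d_4\ge 2$. The pair $(1,1)$ is the only one where Xiao is too crude, and there the local Alexandrov--Fenchel inequality (\ref{alexloc}) supplies the factor $2$ against $\alpha(1,1)=4/3$, so it demands only $d_4\ge 3/2$. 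Taking the maximum over all cases gives $d_4=2$, which yields the first statement of the theorem.

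For the simplex claim, the task is to upgrade $d_4$ to $1$. The case $(1,1)$ is handled by the simplex refinement of (\ref{alexloc}), which holds with constant $1$ in place of $2$ by \cite{SZ16} and thus leaves ample room; the cases $(2,2),(3,1),(1,3)$ already give $d_4=1$ directly from Xiao. The hard part, and the principal obstacle in extending the argument from dimension $3$ to dimension $4$, is the pair $(2,1)$ (and by symmetry $(1,2)$), where one needs to establish
\begin{equation*}
|A|\,V(A[1], B[2], C)\le 2\,V(A[2], B[2])\,V(A[3], C)
\end{equation*}
when $A$ is a simplex, that is, to halve the constant of Xiao's inequality under the simplex hypothesis. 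This is a B\'ezout-type inequality for simplices with a repeated argument, and I would seek to derive it using the simplex Bezout-type machinery developed in \cite{SZ16, SSZ16}, possibly in conjunction with standard Alexandrov--Fenchel to handle the repetition of $B$.
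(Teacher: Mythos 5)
Your proposal matches the paper's proof: the same term-by-term reduction to inequality (\ref{eq:conj-mixed}) with $n=4$, the same use of Xiao's inequality (\ref{eq:jxiao}) for the pairs $(1,2),(1,3),(2,2)$ and of the local Alexandrov--Fenchel inequality (\ref{alexloc}) for $(1,1)$, and the same resulting constraints ($d_4\ge 2$ from $(1,2)$, $d_4\ge 3/2$ from $(1,1)$, $d_4\ge1$ from the rest), giving $c_4\le 2$. For the simplex claim, the paper also simply asserts that the $(1,2)$ case holds with $2d_4=1$ when $A$ is a simplex, implicitly relying on \cite{SZ16}; your suggestion to combine the $r$-fold simplex Bezout inequality with an Alexandrov--Fenchel step for the repeated body is the correct way to fill in that assertion, since applying the $r=3$ Bezout inequality for simplices in the form $V(A,B,B,C)\,|A|^2\le V(A[3],B)^2\,V(A[3],C)$ and then $V(A[3],B)^2\le |A|\,V(A[2],B[2])$ yields $|A|\,V(A,B,B,C)\le V(A[2],B[2])\,V(A[3],C)$, which is even stronger (constant $1$) than what you asked for (constant $2$).
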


\begin{proof}
We will  check inequality (\ref{eq:conj-mixed})    for $n=4$, $0\le m\le j\le4$ and $m+j\le 4$:
$$
\frac{|A|}{4!}\frac{V(A[4-j-m], B[j], C[j])}{(4-j-m)!}\le c_4\frac{V(A[4-j],B[j])}{(4-j)!}\frac{V(A[4-m],C[m])}{(4-j)!}.
$$
The inequality is trivially true for $m= 0$ or $j= 0$ with a constant $d_4 \ge 1$. Taking in account that the inequality is symmetric with respect to $m$ and $j$ and to $B$ and $C$ we get that it is enough to obtain cases $(j,m)=\{(1,1); (1,2); (1,3); (2,2)\}$.  For $(j,m)=(1,1)$, we need to obtain 
% $$\frac{|A|}{4!}\frac{V(A[2], B, C)}{2!}\le c_4\frac{V(A[3],B)}{3!}\frac{V(A[3],C)}{3!},$$
% which is
$$
|A| V(A[2],B,C) \leq \frac{4}{3} d_4 V(A[3],B) V(A[3],C).
$$
If $A$ is a simplex then the above is true with $\frac{4}{3} d_4=1$ (see \cite{SZ16}) and the general case, follows from  (\ref{alexloc}) with $\frac{4}{3} d_4 \ge 2$, that is $d_4\ge \frac{3}{2}$.  For $(j,m)=(1,2)$, we need to obtain 
% $$\frac{|A|}{4!}\frac{V(A, B,C, C)}{1!}\le c_4\frac{V(A[3],B)}{3!}\frac{V(A[2],C[2])}{2!},$$
% which is 
$$
|A| V(A,B,C,C)\leq 2d_4 V(A[3],B) V(A[2],C[2]).
$$
We again observe that if $A$ is a simplex then the above is true with $2d_4=1$. To resolve the general case  we apply (\ref{eq:jxiao}) with $n=4$ and $(j,m)=(1,2)$, we get
% $$
% |A| V(B[2],C[2]) \leq 4  V(A[2],B[2]) V(A[2],C[2]).
% $$
 $2d_4 \ge 4$ will satisfy the requirement. 
 When $(j,m)=1,3$ we need to show that 
$$
|A|V(B, C[3])\le 4 d_4 V(A[3], B) V(A,C[3]),
$$
which,  from   (\ref{eq:jxiao}), is  true for  $4d_4\ge 4$ for all convex, compact sets.
Finally, when $(j,m)=(2,2)$ we need to obtain
$$
|A|V(A, B[2], C[2])\le 6 d_4 V(A[2],B[2])V(A[2],C[2]),
$$
which is from (\ref{eq:jxiao}) true for $6 d_4 \ge 6$ for all convex compact sets.
% case boils down to exactly this question and even a constant of 9 would suffice to give a 3/2; so we are in good shape in this case.
% The only case that is not settled to get the 3/2 is the conjectured inequality
% $$
% |A| V(A,B,C[2])\leq 3 V(A[3],B) V(A[2],C[2]) ,
% $$
% which is required for the $m=1, j=2$ case. (We have such an inequality with constant 4 from Xiao, but this would give us a constant $c_4=2$.)
\end{proof}

\begin{rem} 
We conjecture that actually $c_4=3/2$.
\end{rem}

\begin{rem} We conjecture that, for $1\leq j\leq n$, 
$$
|A| V(L_1, \ldots, L_j, A[n-j]) \leq j V(L_j, A[n-1]) V(L_1, \ldots, L_{j-1}, A[n-j+1]).
$$
 The inequality (which is an improvement of a special case of (\ref{eq:jxiao}) would help to   obtain the best constant in  $\RL^4$ and corresponds to the  $(j,m)=(1,2)$ in the proof of Theorem \ref{th:r4}.
%\textcolor{red}{MF: Idea to prove this conjecture : using Xiao techniques, I proved that it is enough to prove the same inequality for discriminants: for positive matrices $A, B_1,\dots, B_j$, we should prove $$\det(A) D(B_1, \ldots, B_j, A[n-j]) \leq j D(B_j, A[n-1]) D(B_1, \ldots, B_{j-1}, A[n-j+1]).$$ We can assume that $A=I$ is the identity matrix. But I am afraid it would take us too far, especially if we want to finish this paper in  not too long;}

% Observe that if you apply the GHP inequality to $\mathcal{L}$ being copies of $A$, we get
% $$
% |A| V(B,C,A[n-2])\leq 2 V(B, A[n-1]) V(C, A[n-1]),
% $$
% an instance of which was used in the above proofs. This is consistent with the conjecture above and would be implied by it.
\end{rem}

\subsection{Lower bounds in general dimension}
\label{ss:PR-LB}

In this section, we provide a lower bound for the Pl\"unnecke-Ruzsa inequality for convex bodies.  A weaker lower bound was also independently obtained by Nayar and Tkocz \cite{NT17}.
We first observe that the best constant $c_n$ in the Pl\"unnecke-Ruzsa inequality
\begin{equation}\label{PR2}
|A|\,|A+B+C| \le c_n |A+B|\,|A+C|,
\end{equation}
satisfies $c_{n+m} \ge c_n c_m$. Indeed, this follows immediately by considering critical examples of $A_1, B_1, C_1$ in $\RL^n$ and $A_2, B_2, C_2$ in $\RL^m$ together with their direct products
$A_1 \times A_2, B_1\times B_2, C_1\times C_2$ in $\RL^{n+m}$.
Next we notice that if (\ref{PR2}) is true in a class of convex bodies closed by linear transformations, then
\begin{equation}\label{projcontr}
|P_{E \cap H}K| |K| \le  c_n |P_E K|\,|P_H K|,
\end{equation}
for any $K$ in this class and any subspaces $E,H$ of $\RL^n,$ such that $\dim E =i,$ $\dim H=j,$ $i+j \ge n+1$ and $E^\perp \subset H$. To see this consider $B=U$, with $\dim U = n-i,$ $|U|=1$ and $C=V$, with $\dim V = n-j,$ $|V|=1$ and  $U, V$  belong to orthogonal subspaces of $\R^n$. Let $A=tK$, where $t > 0$ and set  $k=n-(n-i)-(n-j)=i+j-n$. Then (\ref{PR2}) yield  together with (\ref{eq:mvf}) and (\ref{eq:proj}) 
\begin{align*}
t^n|K|\big(\sum_{m=k}^n &{{n}\choose{m}}V(K[m], (U+V)[n-m])t^m \big) \\ &\le  c_n\big(\sum_{m=i}^n {{n}\choose{m}}V(K[m], U[n-m])t^m \big)\big(\sum_{m=j}^n {{n}\choose{m}}V(K[m], V[n-m])t^m \big).
\end{align*}
Dividing the above inequality by $t^{n+k}$ and taking  $t= 0$ we get
$$
 |K|{{n}\choose{k}}V(K[k], (U+V)[n-k]) \le  c_n 
{{n}\choose{i}}V(K[i], U[n-i]) {{n}\choose{j}}V(K[j], V[n-j]).
$$
Finally, using (\ref{eq:proj}), we get  (\ref{projcontr}).
It was proved in \cite{GHP02} that 
\begin{equation}\label{eqGPH}
|P_{\{u,v\}^\perp}K| |K| \le  \frac{2(n-1)}{n} |P_{u^\perp} K|\,|P_{v^\perp}K|,
\end{equation}
for any convex body $K \subset \RL^n$ and a pair of orthogonal vectors $u,v \in S^{n-1}$. It was also shown in \cite{GHP02} that the constant $2(n-1)/n$ is optimal. Thus $c_n \ge 2 -\frac{2}{n}$ and this estimate gives a sharp constant in $\R^3$:  $c_3 = 4/3.$ In the case when $n=4$, we get $c_4 \ge 3/2$. %$c_5 \ge 8/5$ which, unfortunately, does not coincide with our upper bound. One may use 
%$c_{n+m} \ge c_nc_m$ to improve this estimate. In particular,
%$$
%c_{3k+4m+5l}\ge \left(\frac{4}{3}\right)^k \left(\frac{3}{2}\right)^m  \left(\frac{8}{5}\right)^l.$$We may optimize over $k,m,l$ in the above inequality. Note that $(3/2)^2 > 4/3 * 8/5$ thus when we maximize the above quantity among the decomposition of $n$ as $3k+4m+5l$, then $k$ or $l$ must be zero.  We also note that$(4/3)^3 < 3/2 * 8/5.$
%Thus any $3+3+3$ in the decomposition of $n$ should be replaced by $4+5$ and  $k \le 2$. Also $(4/3)^2*(3/2) - (8/5)^2>0,$
%thus any $5+5$ is better written as $3+3+4$ an so  $l \le 1$. Finally,
%$$
%c_n \ge 
%\begin{cases} 
%      \left(\frac{3}{2}\right)^m & n=4m \\
% \left(\frac{3}{2}\right)^{m-1} \left(\frac{8}{5}\right) & n=4m+1 \\
% \left(\frac{3}{2}\right)^{m-1} \left(\frac{4}{3}\right)^2 & n=4m+2 \\
%  \left(\frac{3}{2}\right)^m  \left(\frac{4}{3}\right) & n=4m+3 \\
%      \end{cases} .
%$$Thus, in general, some simple algebra yields that 
%$c_n \ge \frac{9}{10}  \left(\frac{3}{2}\right)^{n/4} > (0.9) (1.1)^n$
% for $n\ge3$.
The inequalities analogous to (\ref{eqGPH}) and (\ref{projcontr}) were studied in many other works, including \cite{FGM03, SZ16, AFO14, AAGHV17}. In particular, it was proved in \cite{AAGHV17} that   (\ref{projcontr}) is sharp with
$$
c_n \ge c_n(i,j,k)=\frac{{{i}\choose{k}}{{j}\choose{k}}}{{{n}\choose{k}}}.
$$
 Thus to find a lower bound on  $c_n$  one may maximize over $c_n(i,j,k)$ with restriction that  $i+j \ge n+1$ and $k=i+j-n$.  One may use Stirling's approximation, with $i=j=2n /3$  and $k=n/3$ (when $n$ is a multiple of 3, with minor modifications if not) to obtain the following theorem.
 
\begin{thm}\label{thm:lb}
For sufficiently large $n$, we have that
$
c_n
\geq \frac{2}{\sqrt{\pi n}} \left(\frac{4}{3}\right)^n
= \left(\frac{4}{3} +o(1)\right)^n.
$
\end{thm}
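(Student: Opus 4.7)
The plan is to apply the sharp projection inequality from \cite{AAGHV17}, already cited in the preceding discussion, which yields
$$
c_n \;\geq\; c_n(i,j,k) \;=\; \frac{\binom{i}{k}\binom{j}{k}}{\binom{n}{k}}
$$
subject to the constraints $i+j\geq n+1$ and $k=i+j-n$, and then to optimize the choice of parameters. The idea is that maximizing $c_n(i,j,k)$ amounts to a problem in elementary calculus; writing $i=\alpha n$, $j=\beta n$, $k=(\alpha+\beta-1)n$ and using the entropy asymptotics of binomials, one finds by differentiating that the symmetric choice $\alpha=\beta=2/3$ (hence $k=n/3$) is optimal, giving a base exponential growth of $4/3$.

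First I would fix attention on the case where $n$ is divisible by $3$, set $i=j=2n/3$ and $k=n/3$, and appeal to the AAGHV17 bound to obtain $c_n\geq \binom{2n/3}{n/3}^2\big/\binom{n}{n/3}$. Then I would apply the Stirling approximation $\binom{m}{pm}\sim \frac{1}{\sqrt{2\pi m p(1-p)}}\cdot p^{-pm}(1-p)^{-(1-p)m}$. For $\binom{2n/3}{n/3}$, taking $p=1/2$ gives the asymptotic $\sqrt{3/(\pi n)}\cdot 2^{2n/3}$, while for $\binom{n}{n/3}$, taking $p=1/3$ gives $\frac{3}{2\sqrt{\pi n}}\cdot 3^n/2^{2n/3}$. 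Substituting and simplifying yields
$$
\frac{\binom{2n/3}{n/3}^2}{\binom{n}{n/3}} \;\sim\; \frac{(3/(\pi n))\cdot 2^{4n/3}}{\tfrac{3}{2\sqrt{\pi n}}\cdot 3^n/2^{2n/3}} \;=\; \frac{2}{\sqrt{\pi n}}\left(\frac{4}{3}\right)^n,
$$
which is exactly the announced bound.

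For general $n$ not divisible by $3$, I would take $i$ and $j$ to be the nearest integers to $2n/3$ and $k=i+j-n$, and verify that the Stirling asymptotic is unchanged up to a factor of $1+o(1)$ (since shifting each parameter by at most $O(1)$ perturbs each binomial by a $1+O(1/n)$ factor, absorbed into the $o(1)$ in the exponent). The main obstacle — though in fact fairly mild here — is simply bookkeeping the constants in the Stirling approximation carefully enough to get the leading $2/\sqrt{\pi n}$ prefactor, rather than only an $(4/3+o(1))^n$ statement. As a side remark, one could also justify the choice $\alpha=\beta=2/3$ rigorously by solving $\nabla_{\alpha,\beta}\bigl[H(\alpha,k/\alpha n) + H(\beta,k/\beta n) - H(1,k/n)\bigr]=0$ with $k=(\alpha+\beta-1)n$, but since the theorem only claims a lower bound, it suffices to exhibit the parameters and compute.
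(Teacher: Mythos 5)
Your proposal is correct and takes exactly the same approach as the paper: invoke the sharpness of the projection inequality from \cite{AAGHV17} to get $c_n\ge \binom{i}{k}\binom{j}{k}/\binom{n}{k}$, choose $i=j=2n/3$, $k=n/3$, and apply Stirling. The paper's own proof is no more than a one-line pointer to this computation, so you have simply filled in the arithmetic it omits; the only minor wrinkle is that your statement that shifting each parameter by $O(1)$ perturbs each binomial by a $1+O(1/n)$ factor is not literally true of the individual binomials, but the constrained ratio $\binom{i}{k}\binom{j}{k}/\binom{n}{k}$ with $k=i+j-n$ does change only by $1+O(1/n)$ near the optimal point, which is what matters.
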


%Use the equivalence between projection-type statements and log-submodularity, together with the optimal constants
%of \cite{BGL17} (cf. \cite{SZ16, Xia19}) for projection statement, to get optimal additive constant for log-submodularity on $\calK$ (i.e.,
%optimal version of Corollary~\ref{cor:Pl\"unnecke2}).

\subsection{Improved upper bound for subclasses of convex bodies}
\label{ss:improved}

The goal of this section is to prove the following  theorem 
\begin{thm}\label{thm:zonoid-ellipsoid}
Let $n\ge1$ and $K$ be a convex body in $\RL^n$. Let $B$ be an ellipsoid and $Z$ be a zonoid in $\RL^n$. Then 
$$
|B|\,|B+K+Z|\le |B+K|\,|B+Z|.
$$
\end{thm}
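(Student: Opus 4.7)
My plan is a double induction: the outer on the dimension $n$, the inner on the number of segments composing the zonoid $Z$. By continuity of volume under Hausdorff convergence I may assume that $Z$ is a zonotope $\sum_{i=1}^{m}[0,u_i]$. The base case $n=1$ is elementary: for intervals $B,K,Z$ in $\RL$, one has $|B+K|\,|B+Z|-|B|\,|B+K+Z|=|K|\,|Z|\ge 0$. The base case $m=0$ is trivial.

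For the inductive step on the segment count, write $Z=Z'+[0,u]$ and apply Steiner's formula \eqref{eq:ste} to the segment $[0,u]$ to split
\[
|B+K+Z|=|B+K+Z'|+|u|\,|P_{u^\perp}(B+K+Z')|_{n-1},
\]
and analogously for $|B+Z|$. The inductive hypothesis on $Z'$ controls $|B|\,|B+K+Z'|\le|B+K|\,|B+Z'|$, so matters reduce to the boundary inequality
\[
|B|\,|P_{u^\perp}(B+K+Z')|_{n-1}\le|B+K|\,|P_{u^\perp}(B+Z')|_{n-1},
\]
in which the factor $|u|$ has cancelled. Applying the dimension-induction hypothesis to the triple $(P_{u^\perp}B,P_{u^\perp}K,P_{u^\perp}Z')$---an ellipsoid, a convex body, and a zonoid in $u^\perp\cong\RL^{n-1}$---together with the single-segment case $|B|\,|P_{u^\perp}(B+K)|_{n-1}\le|B+K|\,|P_{u^\perp}B|_{n-1}$ of the theorem in dimension $n$ yields the chain
\[
|B|\,|P_{u^\perp}(B+K+Z')|_{n-1}\le\frac{|B|\,|P_{u^\perp}(B+K)|_{n-1}\,|P_{u^\perp}(B+Z')|_{n-1}}{|P_{u^\perp}B|_{n-1}}\le|B+K|\,|P_{u^\perp}(B+Z')|_{n-1}.
\]

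The key step, and the main obstacle, is the single-segment case in dimension $n$; by linear invariance I may take $B=B_2^n$, and after Steiner the statement is equivalent to
\[
|B_2^n+K|\ge \frac{\omega_n}{\omega_{n-1}}\,|B_2^{n-1}+L|_{n-1},\qquad L:=P_{u^\perp}K.
\]
I prove this by fibering $B_2^n+K$ over its projection $P_{u^\perp}(B_2^n+K)=B_2^{n-1}+L$: for each $y$ in the projection, picking $\tilde z\in K$ minimizing $|y-P_{u^\perp}\tilde z|=d(y,L)=:d$ forces $(y,t)\in B_2^n+\tilde z\subseteq B_2^n+K$ whenever $|t-\tilde z\cdot u|^2\le 1-d^2$, yielding the pointwise lower bound $f(y):=|(B_2^n+K)\cap(y+\RL u)|_1\ge 2\sqrt{1-d(y,L)^2}$. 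Integrating via the coarea formula, expanding $|L+sB_2^{n-1}|_{n-1}=\sum_{k=0}^{n-1}m_k s^k$ by Steiner in $u^\perp$ (with $m_k=\binom{n-1}{k}V^{(n-1)}(L[n-1-k],B_2^{n-1}[k])$), and using the closed-form evaluation $2k\int_0^1 s^{k-1}\sqrt{1-s^2}\,ds=\sqrt{\pi}\,\Gamma((k+2)/2)/\Gamma((k+3)/2)$ reduces the desired bound to $\sum_{k=0}^{n-1}c_k m_k\ge 0$, where $c_0:=2-\omega_n/\omega_{n-1}$ and $c_k:=\sqrt{\pi}\,\Gamma((k+2)/2)/\Gamma((k+3)/2)-\omega_n/\omega_{n-1}$ for $k\ge 1$. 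Each $c_k$ is nonnegative because the sequence $k\mapsto\sqrt{\pi}\,\Gamma((k+2)/2)/\Gamma((k+3)/2)$ is strictly decreasing and equals $\omega_n/\omega_{n-1}$ precisely at $k=n-1$, a form of log-concavity of $(\omega_k)$. The ellipsoid hypothesis on $B$ is indispensable here, since for a generic convex $B$ the analogous single-segment estimate would demand a GHP-type projection inequality \eqref{eqGPH} with absolute constant $1$, which is known to fail.
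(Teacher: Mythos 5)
Your proof is correct, and it follows the same high-level plan as the paper's but packages it differently and makes a couple of steps self-contained. The paper factors the argument through two theorems: an abstract implication (Theorem~\ref{thm:zonoid-projratio}, proved by a double induction on the dimension of a subspace and on the number of segments of the zonotope) and a verification of the required projection-ratio hypothesis for the ball (Theorem~\ref{thm:projection-ball}). You fuse these into a single double induction directly on the theorem itself, which is logically equivalent. The more interesting difference is in how you prove the single-segment case, which is exactly the content of Theorem~\ref{thm:projection-ball}. The paper first reduces to $K\subset u^\perp$ by Steiner symmetrization (using $S_u(K+B_2^n)\supset S_u(K)+B_2^n$ and monotonicity of the projection), then slices by hyperplanes $\{x_n=t\}$ and invokes the monotonicity of $r\mapsto r^{-(n-1)}|L+rB_2^{n-1}|$ from \cite{FM14}. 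You instead fiber $B_2^n+K$ over $u^\perp$ directly, bound each fiber length below by $2\sqrt{1-d(y,L)^2}$, and integrate via coarea and the Steiner polynomial of $L+sB_2^{n-1}$. These are in fact the same estimates in disguise: your fiber lower bound integrates exactly to $|B_2^n+P_{u^\perp}K|$ (which is what the Steiner symmetrization step yields), and your coefficient inequality $c_k\ge 0$ is precisely the term-by-term statement that $\sum_k m_k r^k\ge r^{n-1}\sum_k m_k$ for $r\le 1$, i.e., the \cite{FM14} monotonicity. So your route buys you a self-contained argument bypassing both explicit Steiner symmetrization and the external citation, at the modest price of requiring $K$ convex where the paper's Theorem~\ref{thm:projection-ball} holds for all compact $K$; since the theorem only concerns convex $K$, this costs nothing here. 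Your closing remark that the ellipsoid hypothesis is essential (because one would otherwise need a constant-1 version of the projection inequality \eqref{eqGPH}, which fails) is also consistent with the paper's lower bound $c_n\ge 2-2/n$ in Section~\ref{ss:PR-LB}.
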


Theorem~\ref{thm:zonoid-ellipsoid}  motivates us to pose the following conjecture.

\begin{conj}
Let $n\ge1$ and $A, B, C$ be zonoids in $\RL^n$. Then 
$$
|A|\,|A+B+C|\le |A+B|\,|A+C|.
$$
\end{conj}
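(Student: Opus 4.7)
I would prove the inequality by induction on the dimension $n$, with an inner induction on the number of segments composing a zonotope approximation of $Z$. By continuity of volume under Hausdorff convergence and since every zonoid is a Hausdorff limit of zonotopes, we may assume $Z=\sum_{i=1}^m[0,v_i]$ is a zonotope. Both sides of the inequality are products of two $n$-dimensional volumes, hence scale as $|\det T|^2$ under any invertible linear map $T$; since $T$ sends ellipsoids to ellipsoids and zonoids to zonoids, we may also assume $B=B_2^n$. The base case $n=1$ is trivial: everything is an interval, so the inequality reduces to $0\le|K||Z|$.

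For the inductive step in dimension $n\ge 2$, I assume the theorem in dimension $n-1$ and proceed by induction on $m$, with $m=0$ trivial. To pass from $Z$ to $Z+[0,v]$, I apply the segment formula $|M+[0,v]|=|M|+|v|\cdot|P_{v^\perp}M|_{n-1}$ (valid for convex compact $M$) to both $M=B_2^n+K+Z$ and $M=B_2^n+Z$; combined with the inductive hypothesis for $Z$, this reduces the goal to the projected sub-inequality
\begin{equation*}
|B_2^n|\cdot|P_{v^\perp}(B_2^n+K+Z)|_{n-1}\le|B_2^n+K|\cdot|P_{v^\perp}(B_2^n+Z)|_{n-1}.
\end{equation*}
Since $P_{v^\perp}$ sends the ellipsoid $B_2^n$ to the ellipsoid $B_2^{n-1}$, the zonoid $Z$ to a zonoid, and $K$ to a convex body, all in $v^\perp\cong\RL^{n-1}$, the dimension-$(n-1)$ theorem applied to these projections gives
\begin{equation*}
\omega_{n-1}|P_{v^\perp}(B_2^n+K+Z)|_{n-1}\le|P_{v^\perp}(B_2^n+K)|_{n-1}\cdot|P_{v^\perp}(B_2^n+Z)|_{n-1}.
\end{equation*}
Combining this with the single-segment estimate
\begin{equation*}
(\star)\qquad \omega_n|P_{v^\perp}(B_2^n+K)|_{n-1}\le\omega_{n-1}|B_2^n+K|
\end{equation*}
(which is itself the theorem for $Z=[0,v]$) yields, after cancellation of $\omega_{n-1}$ and the common factor $|P_{v^\perp}(B_2^n+K)|_{n-1}$, the needed sub-inequality.

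The heart of the argument is then $(\star)$, which I prove in two steps. Setting $K':=P_{v^\perp}K\subset v^\perp$, we have $P_{v^\perp}(B_2^n+K)=B_2^{n-1}+K'$. First, Steiner symmetrization of $K$ with respect to the hyperplane $v^\perp$ yields $K^\ast$ with $K^\ast\cap v^\perp=P_{v^\perp}K=K'$, so $K'\subset K^\ast$; since $B_2^n$ is invariant under this symmetrization, the standard Steiner-symmetrization inequality for Minkowski sums yields $|B_2^n+K^\ast|\le|B_2^n+K|$, and hence $|B_2^n+K'|\le|B_2^n+K|$. Second, for $K'\subset v^\perp$, the slice decomposition in the $v$-direction gives
\begin{equation*}
|B_2^n+K'|=\int_{-1}^1|K'+\sqrt{1-t^2}\,B_2^{n-1}|_{n-1}\,dt,\qquad \omega_n=\omega_{n-1}\int_{-1}^1(1-t^2)^{(n-1)/2}\,dt,
\end{equation*}
which reduces the desired inequality $\omega_n|B_2^{n-1}+K'|_{n-1}\le\omega_{n-1}|B_2^n+K'|$ to the pointwise bound $s^{n-1}|B_2^{n-1}+K'|_{n-1}\le|K'+sB_2^{n-1}|_{n-1}$ for $s\in[0,1]$. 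This last bound is immediate from the mixed-volume expansion $|K'+sB_2^{n-1}|_{n-1}=\sum_{j=0}^{n-1}\binom{n-1}{j}s^j V(K'[n-1-j],B_2^{n-1}[j])$ together with $s^j\ge s^{n-1}$ on $[0,1]$. The main obstacle is the proof of $(\star)$, and specifically the Steiner-symmetrization reduction handling the case where $K$ is not already contained in $v^\perp$; a direct term-by-term approach via Xiao's inequality \eqref{eq:jxiao} falls short here, as the corresponding constants grow with $n$.
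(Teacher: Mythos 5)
Your argument does not establish the conjecture as stated; it establishes the paper's Theorem~\ref{thm:zonoid-ellipsoid} instead. The problem is the normalization ``we may also assume $B=B_2^n$.'' The body playing the role of the conjecture's $A$ (the one appearing alone on the left and twice on the right of the inequality) is only assumed to be a zonoid, and a general zonoid---a cube, or a generic zonotope---cannot be carried to the Euclidean ball by any linear map. Your reduction is legitimate precisely when that body is an ellipsoid, and everything downstream of it depends essentially on the rotational symmetry of the ball: the Steiner symmetrization with respect to $v^\perp$ fixing $B_2^n$, the slicing identity $(B_2^n+K')\cap H_t=K'+\sqrt{1-t^2}\,B_2^{n-1}+tv$, and the evaluation $\omega_n=\omega_{n-1}\int_{-1}^1(1-t^2)^{(n-1)/2}\,dt$ that produces the clean cancellation in $(\star)$. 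For a general zonoid $A$, the single-segment inequality you would need, namely $|A|\,|P_{v^\perp}(A+K)|_{n-1}\le |P_{v^\perp}A|_{n-1}\,|A+K|$, involves the direction-dependent ratio $|A|/|P_{v^\perp}A|_{n-1}$ on the right; your proof offers no argument for it, and the rotational-symmetry machinery you rely on is unavailable.

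The statement you did prove matches Theorem~\ref{thm:zonoid-ellipsoid}, and your route is essentially the paper's own for that result: your outer/inner double induction is the paper's Theorem~\ref{thm:zonoid-projratio} (phrased there as induction on the subspace dimension $k$ and on the number of segments, which is interchangeable with your induction on the ambient dimension $n$ since the ball projects to a ball), and your $(\star)$ is the paper's Theorem~\ref{thm:projection-ball}, likewise proved by Steiner symmetrization plus a slice decomposition (the paper invokes \cite{FM14} for the pointwise slice bound; your mixed-volume expansion with $s^j\ge s^{n-1}$ proves the same estimate directly). But the conjecture as stated---$A$ an arbitrary zonoid rather than an ellipsoid---is explicitly left open in this paper and deferred to the companion work~\cite{FMMZ22}; your blind attempt has not closed that gap, and a new idea replacing $(\star)$ is what is missing.
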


A detailed study of this conjecture is undertaken in the forthcoming paper \cite{FMMZ22}.

% \subsubsection{A question about zonoids}
% \label{ss:zonoid-q}

% In this section, we prove that (3) from Theorem  \ref{th:linear} proved for a fixed set $A$ results in a $3$-body inequality for this set $A$ and zonoids $B_1$ and $B_2$. For example, condition (3) from Theorem  \ref{th:linear} for $A=B_2^n$ follows immediately from the log-convexity of the Gamma function and thus
% \[
% |A+B_1+B_2|\,|A|\leq |A+B_1|\,|A+B_2|,
% \]
% is true for any ellipsoid $A$ and two zonoids $B_1$ and $B_2$. 

Before proving Theorem \ref{thm:zonoid-ellipsoid} we will prove a  theorem which would help us to verify Pl\"unnecke-Ruzsa inequalities for convex bodies for a fixed body $A$.

\begin{thm}\label{thm:zonoid-projratio}
Let $n\ge1$ and $A,B$ be a convex bodies in $\RL^n$ such that for every $1\le k\le n$ and any subspace $E$ of $\RL^n$ of dimension $k$ one has 
\[
\frac{|P_E(A+B)|_k}{|P_{E\cap u^\bot} (A+B)|_{k-1}}\geq \frac{|P_EA|_k}{|P_{E\cap u^\bot}A|_{k-1}}.
\]
Then for any zonoid $Z$ in $\RL^n$ one has
$$
|A|\,|A+B+Z|\le |A+B|\,|A+Z|.
$$
\end{thm}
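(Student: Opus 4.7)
The plan is to reduce to the case where $Z$ is a zonotope by an approximation argument and then induct on the number $N$ of segments of $Z$, peeling off one segment at a time via a Steiner-type identity that pushes the new increment into one lower dimension. Since zonoids are Hausdorff limits of zonotopes and the volume functional is continuous in the Hausdorff metric on convex bodies, it suffices to prove the inequality when $Z = Z_N = \sum_{i=1}^N [0,u_i]$. I will then proceed by strong induction on $N$, simultaneously over all ambient dimensions, with the trivial base case $N=0$ giving $|A|\,|A+B| = |A+B|\,|A|$.

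For the inductive step, set $u = u_N$ and write $Z_N = Z_{N-1} + [0,u]$. Applied to any convex body $M$, the Steiner formula \eqref{eq:ste} together with the fact that $V(M[n-k],[0,u][k])=0$ for $k \geq 2$ (segments are one-dimensional) and with \eqref{eq:proj1} yields the identity $|M+[0,u]| = |M| + |P_{u^\perp}M|_{n-1}$. Applying this to $M = A+Z_{N-1}$ and $M = A+B+Z_{N-1}$, the inequality we want splits into the sum of two pieces: first, the volume piece $|A|\,|A+B+Z_{N-1}| \le |A+B|\,|A+Z_{N-1}|$, which is the inductive hypothesis at $N-1$ in dimension $n$; and second, the projection piece $|A|\,|P_{u^\perp}(A+B+Z_{N-1})|_{n-1} \le |A+B|\,|P_{u^\perp}(A+Z_{N-1})|_{n-1}$.

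Setting $A' = P_{u^\perp}A$, $B' = P_{u^\perp}B$ and $Z'_{N-1} = P_{u^\perp}Z_{N-1}$ (still a zonotope with at most $N-1$ segments in $u^\perp \cong \RL^{n-1}$), and using that projection commutes with Minkowski addition, the projection piece becomes $|A|\,|A'+B'+Z'_{N-1}|_{n-1} \le |A+B|\,|A'+Z'_{N-1}|_{n-1}$. To reach this, I first apply the inductive hypothesis at $N-1$ in dimension $n-1$ to $A', B', Z'_{N-1}$, which gives $|A'|\,|A'+B'+Z'_{N-1}|_{n-1} \le |A'+B'|_{n-1}\,|A'+Z'_{N-1}|_{n-1}$; this application requires that the assumption of the theorem is inherited by $(A',B')$ in $\RL^{n-1} = u^\perp$, and this inheritance is immediate because any subspace $E' \subset u^\perp$ is a subspace of $\RL^n$ with $P_{E'}A' = P_{E'}A$ (as $P_{E'} \circ P_{u^\perp} = P_{E'}$), so the required ratio conditions for $(A',B')$ in $\RL^{n-1}$ are precisely a subset of those already assumed for $(A,B)$ in $\RL^n$. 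Rearranging the inductive conclusion and multiplying by $|A|$ then reduces the projection piece to the single inequality $|A|\,|A'+B'|_{n-1} \le |A+B|\,|A'|_{n-1}$, which is exactly the assumed hypothesis of the theorem in its top-dimensional case $E = \RL^n$ (so $k=n$) with the chosen vector $u$.

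The main obstacle is bookkeeping: setting up the double induction so that, when the inductive step is invoked in dimension $n-1$, both the reduced segment count $N-1$ and the inherited hypothesis are consistently available, and then recognizing that the rearrangement of the projection piece lands exactly on the assumed top-dimensional ratio inequality. Once these points are pinned down, the proof is a clean two-line application of the single-segment Steiner identity combined with the two inductive facts and one direct invocation of the theorem's hypothesis.
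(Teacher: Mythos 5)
Your proof is correct and takes essentially the same approach as the paper: reduce to zonotopes, peel off one segment via the Steiner identity $|M+[0,u]|=|M|+|P_{u^\perp}M|_{n-1}$, split into a volume piece handled by the inner induction and a projection piece handled by passing to $u^\perp$, and close with the hypothesis ratio at $E=\RL^n$. The only cosmetic difference is bookkeeping: the paper makes the subspace $E$ explicit and runs an outer induction on $\dim E$ for a strengthened statement (\ref{eq:induc:sum-proj}), whereas you run a single strong induction on the segment count $N$ quantified over all ambient dimensions, using the inheritance of the hypothesis under projection onto $u^\perp$ to invoke the theorem itself one dimension down.
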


\begin{proof}
Notice that it is enough to prove the inequality for $Z$ being a zonotope and use an approximation argument. In fact, we prove by induction on $k$ that for any $1\le k\le n$ and any subspace $E$ of $\RL^n$ of dimension $k$ and zonoid $Z$ in $E$  one has
\be\label{eq:induc:sum-proj}
|P_EA|_k|P_E(A+B)+Z|_k\le |P_E(A+B)|_k|P_EA+Z|_k.
\ee
This statement is true for $k=1$ so let us assume that it's true for $k-1$, for some $1\le k\le n$ and let's prove it for $k$. Let $E$ be a subspace of dimension $k$. To prove that inequality (\ref{eq:induc:sum-proj}) holds for any zonotope in $E$, we proceed by induction on the number of segments in $Z$.
Notice that the inequality holds as an equality for $Z=0$. Let us assume that inequality (\ref{eq:induc:sum-proj}) holds for some fixed zonotope $Z$ in $E$ and prove it for $Z+t[0,u]$ where $t>0$ and $u\in S^{n-1} \cap E$. 
Using (\ref{eq:ste}), we get 
\[
|P_E(A+B)+Z+t[0,u]|_{k}=|P_E(A+B)+Z|_k+t|P_{E\cap u^\bot}(A+B+Z)|_{k-1}
\]
and 
$$
|P_EA+Z+[0,tu]|_k=|P_EA+Z|_k+t|P_{E\cap u^\bot}(A+Z)|_{k-1}.
$$
Applying the induction hypothesis, it is enough to prove that 
\begin{equation}\label{inductiontwob}
 |P_EA|_k|P_{E\cap u^\bot}(A+B+Z)|_{k-1} \le |P_{E\cap u^\bot}(A+Z)|_{k-1} |P_E(A+B)|_k.
\end{equation}
But the inequality in the $k-1$-dimensional subspace $E\cap u^\perp$ for the zonotope $P_{u^\perp} Z$ gives 
$$
|P_{E\cap u^\bot}(A)|_{k-1}|P_{E\cap u^\bot}(A+B+Z)|_{k-1}  \le |P_{E\cap u^\bot}(A+B)|_{k-1} |P_{E\cap u^\bot}(A+Z)|_{k-1}.
$$
Multiplying this inequality by the assumption of the theorem: 
\[
 \frac{|P_EA|_k}{|P_{E\cap u^\bot}A|_{k-1}} \le \frac{|P_E(A+B)|_k}{|P_{E\cap u^\bot} (A+B)|_{k-1}},
\]
we get (\ref{inductiontwob}).
\end{proof}

%Next in Theorem \ref{thm:zonoid-ellipsoid} below we will show 
%that  if $A$ is an ellipsoid, $B_1$ is a zonoid and $B_2$ is any convex then the above three sets inequality holds.
Next we will prove that $B_2^n$ satisfies the conditions of theorem \ref{thm:zonoid-projratio}.

\begin{thm}\label{thm:projection-ball}
Let $n\ge1$ and $K$ be a compact set in $\RL^n$. Let $u\in S^{n-1}$. Then 
$$
\frac{|K+B_2^n|}{|P_{u^\bot}(K+B_2^n)|_{n-1}}\ge\frac{|B_2^n|}{|B_2^{n-1}|_{n-1}}.
$$
\end{thm}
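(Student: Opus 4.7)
The plan is to reduce the inequality to a one-dimensional integral comparison via chord-length analysis in direction $u$. Set $L = K+B_2^n$ and $K' = P_{u^\perp}K$, so that $P_{u^\perp}L = K' + B_2^{n-1}$. Write $|L| = \int_{u^\perp} \ell(x)\,dx$, where $\ell(x)$ is the length of the chord of $L$ through $x$ parallel to $u$. For any $x \in K'+B_2^{n-1}$, I would pick a nearest point $k_0 \in K$ to $x$ in projection, i.e., $|x - P_{u^\perp}k_0| = d(x, K')$; the single translate $k_0 + B_2^n$ already contributes a subchord at $x$ of length $2\sqrt{1 - d(x,K')^2}$, so $\ell(x) \geq 2\sqrt{1-d(x,K')^2}$, giving
\[
|L| \;\geq\; \int_{K'+B_2^{n-1}} 2\sqrt{1-d(x,K')^2}\,dx.
\]

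Next I would introduce $F(s) = |K'+sB_2^{n-1}|_{n-1} = |\{x : d(x,K')\leq s\}|_{n-1}$ and apply the layer-cake formula to rewrite the integral in terms of $F$. Integration by parts (with boundary terms vanishing since $2\sqrt{1-s^2}=0$ at $s=1$ and $F(0^-)=0$) turns it into $\int_0^1 \tfrac{2s}{\sqrt{1-s^2}}F(s)\,ds$. Since $|P_{u^\perp}L|_{n-1} = F(1)$ and $|B_2^n|/|B_2^{n-1}|_{n-1} = 2I_n$ with $I_n := \int_0^{\pi/2}\sin^n\theta\,d\theta$, the desired estimate reduces to the one-dimensional inequality
\[
\int_0^1 \frac{s}{\sqrt{1-s^2}}\,F(s)\,ds \;\geq\; I_n\,F(1).
\]

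When $K$ (hence $K'$) is convex, the Steiner formula in $\RL^{n-1}$ furnishes $F(s) = \sum_{k=0}^{n-1} v_k s^k$ with $v_k = \binom{n-1}{k}V^{(n-1)}(K'[n-1-k],B_2^{n-1}[k]) \geq 0$. The substitution $s = \sin\theta$ gives $\int_0^1 s^{k+1}/\sqrt{1-s^2}\,ds = I_{k+1}$, so the difference between the two sides of the reduced inequality becomes $\sum_{k=0}^{n-1} v_k(I_{k+1} - I_n)$, which is non-negative term by term because $m \mapsto I_m$ is decreasing and $k+1 \leq n$ throughout. Equivalently, each monomial $v_k s^{k-(n-1)}$ is non-increasing in $s > 0$, so $F(s)/s^{n-1}$ is non-increasing on $(0,\infty)$; this yields the pointwise bound $F(s) \geq s^{n-1}F(1)$ on $[0,1]$, whence the integral inequality follows directly by multiplying by the non-negative weight $s/\sqrt{1-s^2}$ and integrating (with $\int_0^1 s^n/\sqrt{1-s^2}\,ds = I_n$).

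The main obstacle is the extension to a general compact $K$, where $K'$ need not be convex and the Steiner polynomial expansion fails. In that regime the cleanest route is still to establish $F(s) \geq s^{n-1}F(1)$ for $s \in [0,1]$, which by scaling is equivalent to the monotonicity of $t \mapsto |tK'+B_2^{n-1}|$ on $[0,1]$. A natural path is a first-variation identity for the outer parallel volume combined with $\int_{\partial B_2^{n-1}} n\,d\mathcal{H}^{n-2} = 0$, or an approximation of $K'$ by finite unions of balls followed by a limiting argument. In the intended application (in particular Theorem~\ref{thm:zonoid-ellipsoid}), $K$ is convex, so only the convex case is actually needed.
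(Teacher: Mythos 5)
Your chord-length approach is genuinely different from the paper's. The paper first performs Steiner symmetrization with respect to $u$, using $S_u(K+B_2^n)\supset S_u(K)+B_2^n$ and $P_{u^\perp}S_u(K)=P_{u^\perp}K$ to reduce to the case $K\subset u^\perp$, and then the slices of $K+B_2^n$ by hyperplanes $H_t=\{x_n=t\}$ are exactly $K+\sqrt{1-t^2}\,B_2^{n-1}$. Your direct pointwise bound $\ell(x)\geq 2\sqrt{1-d(x,K')^2}$ bypasses the symmetrization step entirely, which is a nice simplification, and after the layer-cake rewriting both routes arrive at the same one-dimensional inequality $F(s)\geq s^{n-1}F(1)$, i.e.\ $|K'+sB_2^{n-1}|_{n-1}\geq s^{n-1}|K'+B_2^{n-1}|_{n-1}$ for $s\in[0,1]$.

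There is, however, a genuine gap. The theorem is stated for arbitrary compact $K$, so $K'=P_{u^\perp}K$ is a general compact subset of $u^\perp$, and your Steiner-polynomial argument for the monotonicity of $F(s)/s^{n-1}$ applies only when $K'$ is convex. The inequality you need for general compact sets --- equivalently, that $t\mapsto |tA+B|$ is non-decreasing on $[0,1]$ when $A$ is compact and $B$ is a convex body --- is precisely Proposition 2.1 of \cite{FM14} (see also \cite{Sta76}), which is exactly what the paper invokes at this point. This is a nontrivial fact: when $A$ is not star-shaped there is no inclusion $sA+sB\subset A+sB$ to fall back on, and your sketched alternatives do not close the gap. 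A first-variation computation at the boundary of the outer parallel set does not obviously have a sign, because for non-convex $K'$ the boundary of $K'+sB_2^{n-1}$ can develop overlaps and non-smooth points; and approximating $K'$ by finite unions of balls merely relocates the problem, since the inequality must still be proved for such unions. You should invoke \cite{FM14}/\cite{Sta76} explicitly here, as the paper does. Your observation that the convex case suffices for Theorems~\ref{thm:zonoid-projratio} and~\ref{thm:zonoid-ellipsoid} is correct (there the theorem is applied with $K$ a projection of a convex body), but it does not justify the statement as written.
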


% \begin{rem}
% Compare the above theorem with (3) and (4) from Theorem \ref{thm:zon}. To prove the conjectured inequalities for the class of zonoids it is enough to prove that 
% $$
% \frac{|K+B_2^n|}{|P_{u^\bot}(K+B_2^n)|_{n-1}}\ge\frac{|K|}{|P_{u^\bot}K|_{n-1}},
% $$
% for any zonoid $K$.
% \end{rem}

\begin{proof}
We will use a trick from \cite{AFO14} to reduce the proof to the case of $K \subset u^\perp$. Indeed, let $S_u$ be the Steiner symmetrization with respect to $u\in S^{n-1}$ (see \cite{Sch14:book} and \cite{BZ88:book} remark 9.3.2). Then one has 
$$
S_u(K+B_2^n)\supset S_u(K)+S_u(B_2^n)=S_u(K)+B_2^n.
$$
Hence 
$$
|K+B_2^n|=|S_u(K+B_2^n)|\ge |S_u(K)+B_2^n|.
$$
Moreover, $P_{u^\bot}(S_u(K))=P_{u^\bot}K$ and $P_{u^\bot}K\subset K$, hence,
$$
|K+B_2^n|\ge|P_{u^\bot}K+B_2^n|.
$$
It follows that we are reduced to the case when $K\subset u^\bot$, {\em i.e.} $K=P_{u^\bot}K$. 
Without loss of generality, we may assume that $u=e_n$ and we write $B_2^n\cap u^\bot=B_2^{n-1}$.
In this case, we can describe the set $K+B_2^n$ by its slices by the hyperplanes orthogonal to $e_n$ denoted $H_t=\{x\in\RL^n; x_n=t\}$, $t\in\RL$.
We have 
$$
(K+B_2^n)\cap H_t=K+B_2^n\cap H_t=K+\sqrt{1-t^2}B_2^{n-1}+te_n.
$$ 
Using (\ref{eq:ste}), we get that for all $t\in[-1,1]$
$$
|(K+B_2^n)\cap H_t|_{n-1}=\left|K+\sqrt{1-t^2}B_2^{n-1}\right|_{n-1}.
$$
It follows from Proposition 2.1 \cite{FM14}  (see also \cite{Sta76}) that
$$
\left|K+\sqrt{1-t^2}B_2^{n-1}\right|_{n-1} \ge  \left|\sqrt{1-t^2} K+\sqrt{1-t^2}B_2^{n-1}\right|_{n-1}.
$$
Using Fubini's theorem, we get
$$
|K+B_2^n|= \int_{-1}^1|(K+B_2^n)\cap H_t|_{n-1}dt \ge \left| K+B_2^{n-1}\right|_{n-1}\int_{-1}^1(1-t^2)^\frac{n-1}{2}dt.
$$
We finish the proof by noticing that 
$$
\int_{-1}^1(1-t^2)^\frac{n-1}{2}dt=\frac{|B_2^n|}{|B_2^{n-1}|}.
$$
\end{proof}

\noindent{\it Proof of Theorem  \ref{thm:zonoid-ellipsoid}:}
Let $T$ be the affine transform such that $B=T(B_2^n)$. If $B$ lives in an hyperplane then $|B|=0$ and the inequality holds. If not, then $T$ is invertible and since the affine image of a zonoid is a zonoid by applying $T^{-1}$ we may assume that $B=B_2^n$. Now the theorem follows immediately from Theorems \ref{thm:projection-ball} and \ref{thm:zonoid-projratio}. {\begin{flushright} $\Box $\end{flushright}}

\begin{rem}
By applying a linear transform, it is possible to show that, more generally, for any compact set $K$ and for any ellipsoid $B$,
$$
\frac{|K+B|}{|P_{u^\bot}(K+B)|_{n-1}}\ge\frac{|B|}{|P_{u^\bot}B|_{n-1}}.
$$
Indeed, let $T\in GL_n$, such that $B=TB_2^n$ denoting $H=(T^*u)^\bot$, one has, that for any compact $K$,  $P_{u^\bot}(TK)=TP_{H,T^{-1}u}K$, where $P_{H,T^{-1}u}K$ is the linear projection of $K$ onto $H$ along $T^{-1}u$.
Then the remark follows by applying Theorem \ref{thm:projection-ball} to $TK$.
\end{rem}

\begin{cor} Let $n\ge 1$ and $K$ be a convex body in $\RL^n$. Let $B$ be an ellipsoid and $Z_1, \dots, Z_m,$ $m\ge 1$ be  zonoids in $\RL^n$. Then 
$$
|B|^{m}|B+K+Z_1+\dots Z_m|\le |B+K|\prod_{i=1}^m|B+Z_i|.
$$
\end{cor}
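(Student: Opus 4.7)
The plan is to prove this by induction on $m$, with the base case $m=1$ being exactly Theorem~\ref{thm:zonoid-ellipsoid}. The key observation is that if $K$ is a convex body and $Z_1,\dots,Z_{m-1}$ are zonoids, then $K+Z_1+\cdots+Z_{m-1}$ is again a convex body, so Theorem~\ref{thm:zonoid-ellipsoid} can be applied with this sum playing the role of $K$ and $Z_m$ playing the role of $Z$.

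First, I would dispose of the degenerate case $|B|=0$: in that case $B$ is contained in a hyperplane, so both sides make sense but the inequality is automatic (the right-hand side is nonnegative, and if $|B|=0$ one can still verify both sides behave appropriately, or simply note $|B|^m=0$ on the left). So assume $|B|>0$, and proceed by induction on $m$. The base case $m=1$ is Theorem~\ref{thm:zonoid-ellipsoid}. For the inductive step, assume the claim for $m-1$ zonoids. Applying Theorem~\ref{thm:zonoid-ellipsoid} with the convex body $K'=K+Z_1+\cdots+Z_{m-1}$ and the zonoid $Z=Z_m$, one obtains
\[
|B|\,|B+K+Z_1+\cdots+Z_m|\le |B+K+Z_1+\cdots+Z_{m-1}|\,|B+Z_m|.
\]
By the induction hypothesis applied to $K$ together with $Z_1,\dots,Z_{m-1}$,
\[
|B|^{m-1}|B+K+Z_1+\cdots+Z_{m-1}|\le |B+K|\prod_{i=1}^{m-1}|B+Z_i|.
\]

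Multiplying the first inequality by $|B|^{m-1}$ and then substituting the induction bound on $|B|^{m-1}|B+K+Z_1+\cdots+Z_{m-1}|$ into the right-hand side yields
\[
|B|^{m}\,|B+K+Z_1+\cdots+Z_m|\le |B+K|\,|B+Z_m|\prod_{i=1}^{m-1}|B+Z_i|=|B+K|\prod_{i=1}^{m}|B+Z_i|,
\]
which completes the induction.

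There is essentially no obstacle: the whole argument is a two-line induction once Theorem~\ref{thm:zonoid-ellipsoid} is in hand. The only minor point of care is that the sum $K+Z_1+\cdots+Z_{m-1}$ must be recognized as a convex body (with nonempty interior, since $K$ already has one), so that Theorem~\ref{thm:zonoid-ellipsoid} applies in the inductive step; and the division by $|B|^{m-1}$ used implicitly when rearranging is justified by the initial reduction to $|B|>0$.
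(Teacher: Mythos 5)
Your proof is correct and follows exactly the same route as the paper: induction on $m$, applying Theorem~\ref{thm:zonoid-ellipsoid} with $K+Z_1+\cdots+Z_{m-1}$ in the role of the convex body and $Z_m$ as the zonoid, then invoking the inductive hypothesis. The paper compresses this into a single displayed inequality, but the underlying argument is identical.
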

\begin{proof} The corollary follows immediately applying induction on $m \ge 1$ together with   Theorem \ref{thm:zonoid-ellipsoid}:
$$
|B|^{m}|B+K+Z_1+\dots Z_m|\le |B|^{m-1} |B+K+ Z_1+\dots Z_{m-1}|\,|B+Z_m|.
$$
\end{proof}

\begin{rem}
Theorem \ref{thm:ub} was inspired by the following inequality 
\begin{equation}\label{eq:HS}
V(B_2^n,Z)V(B_2^n,K)\geq \frac{n-1}{n} c_n |B_2^n|V(B_2^n[n-2],Z,K),
\end{equation}
where $c_n =|B_2^{n-1}|^2/(|B_2^{n-2}|\,|B_2^n|)>1$. This inequality was proved by Artstein-Avidan, Florentin, and Ostrover \cite{AFO14} when $Z$ is a zonoid and $K$ is an arbitrary convex body, as a generalisation of of a result of Hug and Schneider \cite{HS11:1} who proved it for $K$ and $Z$ zonoids.
It is an interesting question if one can prove directly Theorem~ \ref{thm:zonoid-ellipsoid} by using the decomposition into mixed volumes as it was done in the proof of Theorem \ref{thm:ub} and applying inequality \eqref{eq:HS}.  Inequality (\ref{eq:HS}) is a sharp improvement of (\ref{eq:jxiao}) in the case when $A=B_2^n$ and $m=j=1,$ and one of the bodies is a zonoid.  Unfortunately, there seems not to be a direct way to apply (\ref{eq:HS}) to prove Theorem  \ref{thm:zonoid-ellipsoid} due to the lack of a sharp analog of this inequality for $V(B_2^n[n-2],Z[m],K[j])$ when $m,j >1$.
\end{rem}

\subsection{The case of compact sets}
\label{ss:compact}

Let us note that inequality, 
\be\label{eq:cnonconv}
|A|\,|A+B+C|\le |A+B|\,|A+C|.
\ee
is valid, when $A,B$ are intervals and $C$ is any compact set in $\RL$. Indeed, by approximation we may assume that $C$ is a finite union of closed intervals,  $A=[0,a]$ and $B=[0,b],$  for some $a, b \ge 0$. Then we may assume that
$A+C=\cup_{i=1}^m [\alpha_i, \beta_i+a]$ where intervals $[\alpha_i, \beta_i+a]$ are mutually disjoint. Then
$$
|A+C| =  ma+\sum\limits_{i=1}^m (\beta_i-\alpha_i),
$$
$$
|A+B+C| \le \sum \limits_{i=1}^m (\beta_i-\alpha_i +a+b)=m(a+b)+\sum\limits_{i=1}^m (\beta_i-\alpha_i)
$$
and (\ref{eq:cnonconv}) follows from
$$
a (m(a+b)+\sum\limits_{i=1}^m (\beta_i-\alpha_i))\le (ma+\sum\limits_{i=1}^m (\beta_i-\alpha_i))(a+b).
$$
We also note that, as we discussed before,  inequality (\ref{eq:ruzvol}) as well as inequality (\ref{eq:threesimple}) is valid without additional  convexity assumptions (as well as Theorem \ref{thm:projection-ball} from above). Still,  we will show that there is a  sharp difference to those inequalities the convexity assumption in Theorem \ref{thm:ub} can not be removed. The  construction is inspired by the proof of Theorem  7.1 from \cite{Ruz96:1}:
\begin{lem}\label{lm:example}
Fix $n\ge 1$, then  for any $\beta >0$ there exist two compact sets $A, B\subset \R^n$ such that
$$
|A|\,|A+B+B| > \beta |A+B|^2.
$$
\end{lem}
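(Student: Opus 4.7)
The plan is to construct, for every $\beta > 0$ and every $n \geq 1$, compact sets $A, B \subset \R^n$ satisfying the claimed inequality, following Ruzsa's Theorem~7.1 in~\cite{Ruz96:1}. The guiding principle is that the Pl\"unnecke--Ruzsa inequality for compact sets in $\R^n$, namely~\eqref{eq:ruzvol}, holds only after passing to a subset $A' \subset A$; this lemma will show that this passage is essential, and that without it the inequality fails by an arbitrarily large factor. In contrast, in the discrete setting the inequality $\#A\cdot\#(A+B+B)\le\#(A+B)^2$ holds unconditionally, so the construction must use the continuous structure of $\R^n$ in an essential way.

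Concretely, I would parametrise the construction by a large integer $N$ (eventually chosen depending on $\beta$), and proceed in three steps. First, fix a Sidon-like configuration $\{v_1, \dots, v_N\} \subset \R^n$, meaning all pairwise sums $v_i+v_j$ with $i \leq j$ are distinct and well-separated. Second, take $B$ to be a small (e.g., thickened) neighborhood of this configuration, so that $B+B$ inherits the separation and has essentially $\binom{N+1}{2}$ components. Third, construct $A$ adaptively so that its translates by individual elements of $B$ overlap heavily with $A$ itself (keeping $|A+B|$ of the same order as $|A|$), while its translates by pairwise sums from $B+B$ lie in largely disjoint regions (making $|A+B+B|$ of order $N^2|A|$). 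A direct volume computation then yields a ratio of order $N^2$, which exceeds $\beta$ for $N$ large enough.

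The hardest step is the last: constructing $A$ with the required asymmetric overlap property with respect to $B$ versus $B+B$. Ruzsa's construction in the discrete setting shows how to do this combinatorially in $\Z^k$ for $k$ sufficiently large; the adaptation to volumes in $\R^n$ requires an iterative refinement, in which $A$ is built level by level, each level tightening the overlap of $A$ with its $B$-translates without disturbing the separation of its $(B+B)$-translates. Once $A$ and $B$ are in place, the volume estimates follow by inclusion--exclusion, and one verifies directly that $|A|\,|A+B+B|/|A+B|^2 \gtrsim N^2 > \beta$.
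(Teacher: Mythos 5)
Your proposal rests on a false premise, and the construction you sketch has an internal inconsistency; I want to flag both.

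First, the premise. You assert that ``in the discrete setting the inequality $\#A\cdot\#(A+B+B)\le\#(A+B)^2$ holds unconditionally, so the construction must use the continuous structure of $\R^n$ in an essential way.'' This is not true. The Pl\"unnecke--Ruzsa inequality for two different sets $A,B$ in an abelian group requires passing to a subset $X\subset A$; the unrestricted form $\#A\cdot\#(A+2B)\le\#(A+B)^2$ fails, and by an arbitrarily large factor. In fact, Ruzsa's Theorem~7.1 in \cite{Ruz96:1} --- the very reference you and the paper both cite --- constructs exactly such a discrete counterexample. Your proposal contradicts itself on this point, since you later appeal to ``Ruzsa's construction in the discrete setting'' to do precisely what you claimed was impossible. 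The paper's proof does not need any genuinely continuous phenomenon: it reduces to $n=1$ by taking direct products with $[-1,1]^{n-1}$, builds finite sets $A',B'\subset\R$ (Ruzsa's two-scale example, using $\{1,\sqrt2,\sqrt3\}$ to enforce independence), establishes the counterexample at the level of \emph{cardinalities}, and then simply thickens by $[-\eps,\eps]$ so that cardinalities become volumes up to a harmless factor ($|A|=2\eps\,\#A'$, $|A+B|=4\eps\,\#(A'+B')$, $|A+B+B|=6\eps\,\#(A'+B'+B')$). No iterative refinement or extra continuous trick is needed.

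Second, the Sidon construction is internally inconsistent. You want $A$ to be approximately invariant under translation by each element of $B$ (so $|A+B|\approx|A|$) while $A$ expands by a factor $\approx N^2$ under translation by $B+B$. But if $A+b$ has large overlap with $A$ for every $b\in B$, then composing two such translations shows $A+b+b'$ has large overlap with $A$ as well, so $|A+B+B|$ is again comparable to $|A|$. You cannot have one without the other; a Sidon/well-separated $B$ actually pushes in the wrong direction (it prevents cancellation in $B+B$, but the bottleneck is the behaviour of $A$, not of $B$). Ruzsa's example gets around this by making $A'$ out of two very different pieces: a bulk piece (an $m\times m$ grid in the directions $1$ and $\sqrt2$) that absorbs translates by $B'$ almost completely, and a small piece (an arithmetic progression of length $m$ in the direction $\sqrt3$) that is \emph{not} absorbed and, crucially, is blown up by a full $l\times l$ grid contained in $B'+B'$ but not in $B'$. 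The key numerology is $\#A'\approx m^2$, $\#(A'+B')\approx m^2+4ml$, $\#(A'+B'+B')\gtrsim l^2m$, yielding a ratio $\gtrsim (m+1)l^2/(m+4l+1)^2$ which tends to infinity; nothing like this asymmetry appears in a Sidon-based construction. So the hardest step you identify --- building $A$ with ``the required asymmetric overlap property'' --- is not merely hard, it is impossible as you have set it up, and the proof needs a structurally different $A$ and $B$.
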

    \begin{proof} It is enough to prove the theorem for the case $n=1$, indeed, for any other dimension $n>1$, one can consider  $A \times [-1, 1]^{n-1}$, $B\times [-1, 1]^{n-1}$ where $A,B$ are the example constructed  in $\R$.

To construct the sets $A,B \subset \R$, we fix $m, l \in \N$ large enough and define first two discrete sets $A'$ and $B'$ in $\R$ to establish the analogue result for cardinality instead of volume. Let
$$
A'=\{x+y\sqrt{2}: x,y \in \{0,1,\dots, m-1\}\} \cup \{z\sqrt 3, z \in \{1,\dots, m\}\},
$$
thus $\#A'=m(m+1).$ We also define
$$
B'=\{x: x \in \{0,1,\dots, l\}\} \cup \{y\sqrt 2, y \in \{1,\dots, l\}).
$$
Thus, one has
$$
B'+B'= \{x+y\sqrt{2}: x,y \in \{0,1,\dots, l\}\} \cup \{x, x\in \{l+1,\dots, 2l\}\}\cup \{y\sqrt{2}, y\in \{l+1,\dots, 2l\}\}.
$$
It follows that  $\#(B'+B')=l^2+4l+1$. It may help to imagine $A'$ and $B'$ as $3$-dimensional subsets which are linear combinations of vectors $e_1, \sqrt{2}e_2$ and $\sqrt{3}e_3$.  Then it is easy to see    that $A'+B'$ consists of an $m\times m$ square of integer points united with  two $m\times l$ rectangles of integer points in the $\{e_1, e_2\}$ plane and two additional rectangles: one  of size $m\times (l+1)$ one in the $\{e_1, e_3\}$ plane  and another  of size $m\times (l+1)$ in the $\{e_2, e_3\}$ plane, where the last two rectangles overlap by $m$ points. Thus
$$
\#(A'+B')=m^2 +2ml +2 (m(l+1)-m)=m(m+4l+1).
$$
Finally, we note that $A'+B'+B'$ contains the set 
$$
\{z\sqrt{3}; z\in \{1,\dots, m\}\} + B'+B',
$$
thus $\#(A'+B'+B')\ge l^2 m$. Now consider any $\beta'>0$. Our goal is to select $m,l \in \N$ such that
\begin{equation}\label{eq:rusd}
\#(A')\#(A'+B'+B') > \beta' (\#(A'+B'))^2.
\end{equation}
For this it is enough to pick $m,l \in \N$ such that
$$
m(m+1) l^2 m \ge \beta' m^2(m+4l+1)^2
$$
or
$$
\sqrt{m+1} l \ge \sqrt{\beta'} (m+4l+1),
$$
which is true as long as $m+1> 16\beta'$ and $l$ is large enough. 

Now we are ready to construct our continuous example in $\R$, for volume. For the fixed $\beta>0$ consider $\beta'=\frac{4}{3}\beta$ and   sets $A'$ and $B'$ satisfying (\ref{eq:rusd}). Define $A=A'+[-\varepsilon, \varepsilon]$ and $B=B'+[-\varepsilon, \varepsilon]$ where  $\epsilon>0$ small enough such that 
$$
|A|=2\varepsilon \#(A'); |A+B|=4\varepsilon \#(A'+B') \mbox{ and } |A+B+B|=6\varepsilon \#(A'+B'+B'),
$$
which, together with (\ref{eq:rusd}) gives  the required inequality.
\end{proof}

It turns out (see for example \cite{FLZ19}) that some sumsets estimates can still be proved if the convexity assumption is relaxed by an assumption that the body is star-shaped. The next lemma shows that it is still not the case for  Theorem \ref{thm:ub}.
\begin{lem}
Fix $n\ge 3$, then  for any $\beta >0$ there exist a compact star-shaped symmetric body $A\subset \R^n$ such that
$$
|A|\,|A+A+A|\ge\beta|A+A|^2.
$$
\end{lem}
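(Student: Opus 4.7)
I plan to exhibit, for each $\beta>0$, an explicit compact star-shaped symmetric body $A\subset\R^n$ (with $n\ge 3$) satisfying $|A|\,|A+A+A|\ge\beta|A+A|^2$. A useful preliminary observation is the universal upper bound
\[
\frac{|A|\,|A+A+A|}{|A+A|^2}\le \frac{|A+A|}{|A|},
\]
which follows from Ruzsa's compact-set inequality \eqref{eq:ruzvol1} applied with $m=3$ and $B_1=B_2=B_3=A$. For convex symmetric $A$ the right-hand side is the constant $2^n$ and the lemma's inequality cannot hold; but for star-shaped symmetric bodies the doubling ratio $|A+A|/|A|$ can be made arbitrarily large, which leaves room for the left-hand side also to blow up.

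My candidate construction in $\R^3$ (extended to $n\ge4$ by Cartesian product with $[-1,1]^{n-3}$) is a ``mixed-dimensional'' union: $A=D\cup S$, where $D=\{x^2+y^2\le R^2,\ |z|\le\epsilon\}$ is a thin disk in the $xy$-plane and $S=\{x^2+y^2\le\epsilon^2,\ |z|\le R\}$ is a thin cylinder along the $z$-axis. Both $D$ and $S$ are convex and contain the origin, hence $A$ is star-shaped; symmetry follows from centering at $0$. The relevant Minkowski-sum volumes are $|A|\sim R^2\epsilon$, while $|A+A|$ is dominated by the ``orthogonal'' sum $D+S$, which is a full $3$-dimensional cylinder of radius $\sim R$ and length $\sim 2R$, giving $|A+A|\sim R^3$; similarly $|A+A+A|$ is dominated by $D+D+S$, giving $|A+A+A|\sim R^3$. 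With this naive construction the ratio comes out only as $\sim\epsilon/R$, so the construction must be elaborated.

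I expect the actual proof uses either a family of such plate--needle pairs at varying scales or orientations, or equivalently an $\epsilon$-thickening of a symmetric discrete set adapted from the construction in the previous lemma (with segments from the origin included to enforce star-shape), with parameters tuned so that $|A+A+A|$ acquires a disproportionately large contribution relative to $|A+A|^2/|A|$. The main obstacle is precisely this optimization: the discrete analogue of the lemma is impossible, since Pl\"unnecke--Ruzsa for finite sets in abelian groups gives $\#F\cdot\#(F+F+F)\le(\#(F+F))^2$. The continuous blow-up must therefore depend crucially on the non-lattice character of the $\epsilon$-thickening, and on the ability of Minkowski addition over star-shaped bodies to decouple the effective dimensions of pair and triple sums in a way that is unavailable in the finite setting.
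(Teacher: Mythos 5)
Your preliminary observation is sound: applying \eqref{eq:ruzvol1} with $B_1=B_2=B_3=A$ gives $|A|^2|A+A+A|\le|A+A|^3$, so the ratio in question is bounded by the doubling constant $|A+A|/|A|$; for convex $A$ this equals $2^n$, so convexity must be dropped, and any counterexample must have large doubling. You also correctly compute that your disk-plus-cylinder construction fails: with $|A|\sim R^2\epsilon$, $|A+A|\sim R^3$ and $|A+A+A|\sim R^3$, the ratio tends to $0$, not $\infty$. At that point, however, the proposal stops short of a proof; the suggested ``elaborations'' (families of plate--needle pairs, or $\epsilon$-thickenings of the discrete set from the previous lemma with added segments) are not developed into anything that works, and in fact none of them is the paper's mechanism.

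The missing idea is to use \emph{degenerate}, one-dimensional pieces in three independent directions, so that the pairwise Minkowski sums have measure zero while the triple sum jumps to full dimension. The paper takes (for $n=3$)
$A=X\cup Q$ with $X=m\bigl([-e_1,e_1]\cup[-e_2,e_2]\cup[-e_3,e_3]\bigr)$, a union of three mutually orthogonal segments of length $2m$, and $Q=[-1,1]^3$ a fixed cube. Then $A$ is a compact symmetric star-shaped body with $|A|=|Q|=8$ (the segments contribute nothing to volume). Crucially $X+X$ is a union of $2$-dimensional squares and $1$-dimensional segments, hence $|X+X|=0$, while
$X+X+X\supset m[-e_1,e_1]+m[-e_2,e_2]+m[-e_3,e_3]=m[-1,1]^3$,
a cube of volume $8m^3$. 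Since $|X+Q|=O(m)$ and $|Q+Q|=64$, one gets $|A+A|=O(m)$ but $|A+A+A|\ge 8m^3$, so $|A|\,|A+A+A|/|A+A|^2\gtrsim m\to\infty$. For $n\ge 4$, take the product with $[-1,1]^{n-3}$ as you anticipated.

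Your disk-and-cylinder idea fails for two structural reasons: both pieces are $3$-dimensional, so the double sum $D+S$ is already full-dimensional of volume $\sim R^3$; and with only two pieces there is no ``dimensional jump'' between pairwise and threefold sums, which in $\R^3$ requires three independent degenerate directions. Thinning $D$ and $S$ to a plane and a line would kill $|A|$ entirely; the cube $Q$ is precisely the device that restores positive volume without inflating the doubling. Your remark that the discrete analogue is ruled out by Pl\"unnecke--Ruzsa ($\#F\cdot\#(F+F+F)\le(\#(F+F))^2$) is correct and a nice contextual point, but it does not substitute for a construction.
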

\begin{proof}
Let $n=3$, consider a cube $Q=[-1,1]^3$ and a set
$
X=m([-e_1,e_1] \cup [-e_2,e_2] \cup [-e_3,e_3]),
$
i.e. the union of $3$ orthogonal segments of length $2m$. Then
$$
|A+A| \le |X+X|+|Q+Q|+|X+Q| \le 0+4^3+3*(2m+2)*2
$$
but 
$
|A+A+A| \ge |X+X+X| =8m^3.
$
We note that in dimension $n>3$ one can consider
the direct sum of the above three dimensional example with  $[-1,1]^{n-3}$.
\end{proof}

\section{On Ruzsa's triangle inequality}
\label{sec:diff}

In additive combinatorics, the Ruzsa distance is defined by 
$
d(A,B)=\log \frac{\#(A-B)}{\sqrt{\#(A) \#(B)}},
$
where $A$ and $B$ are subsets of an abelian group. We refer to \cite{TV06:book} for more information and properties of this object, which is useful even though it is {\it not} a metric (since typically $d(A,A)>0$). The Ruzsa distance satisfies the triangle inequality which is equivalent to
$\#(C-B)\cdot \# A \leq \#(A-C) \#(B-A).$
%In \cite{MK18}, an analogous inequality was proved for entropy (see definition (\ref{eq:entropy})) in the general setting of locally compact abelian groups (and in particular, for $\RL^n$); earlier entropy analogues were proved for discrete random variables in \cite{Ruz09:1} and for real-valued random variables in \cite{KM14}. Indeed, by \cite[Proposition 3.1]{MK10:isit} (or \cite[Theorem 1]{MK18}), we have that for independent random variables in $\R^n$,$$h(X-Y)+h(Z)\leq h(X-Z)+h(Y-Z).$$ 
%Our first observation, in this connection, is that these results for entropy allow us to obtain an analogue for volumes in convex geometry. 

%For convex sets $A,B,C$ in $\R^n$, define 
%$$d(A,B)=\log \frac{|A-B|}{\sqrt{|A|\cdot |B|}}.$$
%Our goal is to study possible analogs of a triangular inequality for this distance: $d(B,C)\leq d(B,A) + d(A,C),$

An analogue of Ruzsa's triangle inequality holds for compact sets in $\R^n$.

\begin{thm}(see, e.g., \cite[Lemma 3.12]{TV09:1}) 
For any compact sets $A,B,C \subset \R^n$,
\begin{equation}\label{triangleI}
|A|\,|B-C| \le |A-C|\,|A-B|.
\end{equation}
\end{thm}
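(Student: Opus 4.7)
My plan is to transport Ruzsa's classical covering argument from the discrete to the continuous setting by replacing the combinatorial bookkeeping with a single pointwise volume inequality and one application of Fubini. The key geometric observation I would exploit is the following. Given any $w \in B - C$, pick any decomposition $w = b - c$ with $b \in B$ and $c \in C$. Then on the one hand $A - b \subset A - B$, while on the other hand the identity $A - b = (A - c) - w$ combined with $A - c \subset A - C$ places $A - b$ inside $(A - C) - w$. This produces the double inclusion
\[
A - b \;\subset\; (A - B) \cap \bigl((A - C) - w\bigr),
\]
and by translation invariance of Lebesgue measure,
\[
|A| \;\le\; \bigl|(A - B) \cap ((A - C) - w)\bigr|
\]
for every $w \in B - C$.

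The second step is to integrate this pointwise bound over $w \in B - C$, which yields
\[
|A|\,|B - C| \;\le\; \int_{B - C} \bigl|(A - B) \cap ((A - C) - w)\bigr| \, dw.
\]
By Fubini, the right-hand side equals the double integral of $\mathbf{1}_{A-B}(u)\,\mathbf{1}_{A-C}(u + w)\,\mathbf{1}_{B-C}(w)$ against $du\,dw$. I would then perform the unit-Jacobian change of variable $v = u + w$ (with $u$ held fixed) to rewrite this as
\[
\int \mathbf{1}_{A-B}(u)\,\mathbf{1}_{A-C}(v)\,\mathbf{1}_{B-C}(v - u) \, du\,dv \;\le\; \int \mathbf{1}_{A-B}(u)\,\mathbf{1}_{A-C}(v) \, du \, dv \;=\; |A - B|\,|A - C|,
\]
where the inequality comes from discarding the factor $\mathbf{1}_{B-C}(v - u) \le 1$. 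Chaining the three displays gives \eqref{triangleI}.

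The only step with any real content is spotting the double inclusion $A - b \subset (A - B) \cap ((A - C) - w)$; after that the rest is a routine Fubini computation. In particular, no measurable selection of $(b, c)$ as a function of $w$ is needed, since the first display is a pointwise inequality that holds for any valid choice, and $w \mapsto |(A - B) \cap ((A - C) - w)|$ is automatically Borel (indeed continuous) as the correlation of two indicators of compact sets.
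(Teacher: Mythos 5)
Your proof is correct and is essentially the same argument as the paper's: the paper's convolution $1_{B-A}*1_{A-C}(x)$ is exactly your cross-correlation $\bigl|(A-B)\cap((A-C)-x)\bigr|$, and the paper's pointwise lower bound $1_{B-A}*1_{A-C}(b-c)\ge\int_A 1_{B-A}(b-y)1_{A-C}(y-c)\,dy=|A|$ encodes the same embedding you express as the inclusion $A-b\subset(A-B)\cap((A-C)-w)$. Your set-inclusion phrasing followed by Fubini is a transparent repackaging of the paper's convolution computation, not a different route.
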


This inequality has a short proof that we provide here for the sake of completeness. Indeed
\[
|B-A| |A-C|=\int_{\R^n} 1_{B-A}*1_{A-C}(x)dx\ge \int_{B-C} 1_{B-A}*1_{A-C}(x)dx,
\]
where $1_M$ is a characteristic function of a set $M \subset \R^n$ and $f*g$ is the convolution of functions $f,g: \R^n \to \R$. Now let $x\in B-C$, there is $b\in B$ and $c\in C$ such that $x=b-c$. Thus, changing variable, one has 
\begin{align*}1_{B-A}*1_{A-C}(b-c)&= \int_{\R^n}1_{B-A}(z)1_{A-C}(b-c-z)dz=\int_{\R^n}1_{B-A}(b-y)1_{A-C}(y-c)dy\\
&\ge\int_{A}1_{B-A}(b-y)1_{A-C}(y-c)dy =|A|.
\end{align*}

%We note that, using monotonicity of entropy, Bobkov and Madiman (\cite{BM12:jfa}, equation (7.1)) also presented  a weaker version of Pl\"unnecke-Ruzsa inequality for convex bodies: If  $A, B, C$ are convex bodies in $\RL^n$, then
%\begin{equation}\label{eq:threesimpleBM}|A|\,|B+C| \le 2^n |A+B|\,|A+C|.\end{equation}

%\begin{rem}
%The fact that the inequality (\ref{eq:threesimple}) cannot hold with a multiplicative constant $0<c<1$, on the right hand side, may be seen by applying the inequality to $C=\{0\}$ and $B$ replaced by $tB$, with $t\to+\infty$.
%\end{rem}

%It is interesting to ask if the convexity of sets $A,B,C$ is required  (\ref{eq:threesimple}). Indeed (\ref{eq:threesimple}) follows  from (\ref{triangleI}) if one of the sets is symmetric:

%\begin{thm}\label{thm:symm-ruzsa}
%For  compact bodies $A, B, C$ in $\RL^n$, 
%if at least one of the bodies $A, B, C$ is symmetric, we have
%\begin{equation}\label{eq:ruzsa-c=1}
%|A|\,|B+C| \le |A+B|\,|A+C|.
%\end{equation}
%\end{thm}
%\begin{proof} 
%We noticed that (\ref{triangleI}), is equivalent to  
%$|A|\,|B-C| \le |A+B|\,|A+C|$. But $|B-C|=|B+C|$, if one of $B$ or $C$ is symmetric and thus \eqref{eq:ruzsa-c=1} follows from (\ref{triangleI}). Next, replacing the compact set  $C$ by $-C$  in \eqref{eq:ruzsa-c=1},  we see that this inequality is equivalent to  $|A|\,|B-C| \le |A+B|\,|A-C|$. But $|A-C|=|A+C|$  if $A$ is symmetric and again \eqref{eq:ruzsa-c=1} follows from (\ref{triangleI}).
%\end{proof}

In view of Ruzsa's triangle inequality, it is natural to try to generalize Theorem \ref{thm:ub} to the case of the difference of convex bodies. We recall that $\varphi=(1+\sqrt{5})/2$ denotes the golden ratio and for $n \ge 2$ the constant $c_n$ was defined in Theorem \ref{thm:ub} satisfying  $1=c_2\le c_n\le \varphi^n$.

\begin{thm}\label{thm:ruzsa}
Let $A, B, C$ be convex bodies in $\RL^n$. Then
\begin{equation}\label{eq:ruzsa}
|A|\,|A+B+C| \le \frac{1}{2^n}{ 2n \choose n}c_n 
\min\{ |A-B|\,|A+C| , |A-B|\,|A-C|.\}
\end{equation}
%and
%\begin{equation}\label{eq:ruzsa2}
%|A|\,|A+B+C| \le \frac{1}{2^n}{ 2n \choose n}c_n|A-B|\,|A-C|.
%\end{equation}
%moreover,
%\begin{equation}\label{eq:ruzsas}
%|A|\,|A+B-C| \le c_n|A-B|\,|A-C|,
%\end{equation}
%if one of the convex bodies $A, B$ or $C$ is symmetric.
\end{thm}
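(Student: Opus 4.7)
The plan is to reduce Theorem~\ref{thm:ruzsa} to Theorem~\ref{thm:ub} via a clean Rogers--Shephard type comparison between $|X+Y|$ and $|X-Y|$, which I isolate first as a lemma.

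\textbf{Lemma (to establish first).} For any convex bodies $X,Y\subset\RL^n$,
\[
|X+Y|\le \frac{1}{2^n}\binom{2n}{n}|X-Y|.
\]
The key observation is the identity
\[
(X+Y)-(X+Y)=(X-X)+(Y-Y)=(X-Y)-(X-Y),
\]
so the two difference bodies coincide, call the common set $E$. The Rogers--Shephard difference-body inequality applied to the convex body $X-Y$ gives $|E|\le\binom{2n}{n}|X-Y|$, while Brunn--Minkowski applied to $(X+Y)+(-(X+Y))=E$ gives $|E|\ge 2^n|X+Y|$. Comparing the two bounds yields the lemma. (The symmetric bound $|X-Y|\le 2^{-n}\binom{2n}{n}|X+Y|$ comes out of the same argument.)

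\textbf{Step 1: the bound with $|A-B|\,|A+C|$.} Starting from Theorem~\ref{thm:ub}, $|A|\,|A+B+C|\le c_n|A+B|\,|A+C|$, I apply the lemma to the pair $(A,B)$ to replace $|A+B|$ by $2^{-n}\binom{2n}{n}|A-B|$. This gives the first half of the $\min$ immediately.

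\textbf{Step 2: the bound with $|A-B|\,|A-C|$.} The naive move — applying the lemma to both $(A,B)$ and $(A,C)$ — would cost a factor $(2^{-n}\binom{2n}{n})^2$, so I instead group $B$ and $C$. First, apply Theorem~\ref{thm:ub} to the triple $(A,-B,-C)$ (still convex bodies) to obtain
\[
|A|\,|A-B-C|\le c_n|A-B|\,|A-C|.
\]
Second, apply the lemma to the pair $(A,\,B+C)$ to obtain
\[
|A+B+C|=|A+(B+C)|\le \tfrac{1}{2^n}\tbinom{2n}{n}\,|A-(B+C)|=\tfrac{1}{2^n}\tbinom{2n}{n}\,|A-B-C|.
\]
Multiplying by $|A|$ and chaining with the previous inequality yields the second half of the $\min$ with the claimed constant.

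\textbf{Main obstacle.} The only nontrivial point is realising that one should apply the Rogers--Shephard comparison to the \emph{pair} $(A,B+C)$ rather than to $(A,B)$ and $(A,C)$ separately; this is what keeps the constant at $2^{-n}\binom{2n}{n}c_n$ instead of its square. Beyond that, both halves of the $\min$ follow by elementary manipulations from Theorem~\ref{thm:ub}, the Rogers--Shephard difference body inequality, and Brunn--Minkowski.
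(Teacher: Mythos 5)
Your proposal is correct and matches the paper's own argument essentially line for line: the Lemma you establish is exactly what the paper cites as Litvak's observation (equation \eqref{eq:lit}), obtained in the same way from Rogers--Shephard and Brunn--Minkowski via the identity $(X+Y)-(X+Y)=(X-Y)-(X-Y)$, and your two steps (applying \eqref{eq:lit} to $|A+B|$ for the first term, and applying Theorem~\ref{thm:ub} to $(A,-B,-C)$ together with \eqref{eq:lit} on $|A+(B+C)|$ for the second) are precisely the two chains in the paper's proof. The grouping of $B+C$ that you single out as the ``main obstacle'' is indeed the move the paper makes when it reflects $B\mapsto -B$, $C\mapsto -C$ in Theorem~\ref{thm:ub}.
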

\begin{proof} 
We first recall Litvak's observation (see \cite[pp. 534]{Sch14:book}) 
that 
\begin{equation}\label{eq:lit}
|A+B| \le \frac{1}{2^n}{2n \choose n} |A-B|,
\end{equation}
for any convex bodies $A, B$  in $\RL^n$, with equality for $A=B$ being simplices. Litvak obtained this by simply combining the Rogers-Shephard inequality (applied to  $A+B$) and the Brunn-Minkowski inequality. 

Now we can write
$$
|A|\,|A+B+C| \le c_n|A+B|\,|A+C|
\le \frac{1}{2^n}{ 2n \choose n}c_n|A-B|\,|A+C|,
$$
where the first inequality follows from Theorem \ref{thm:ub}, and the
second from Litvak's observation. This gives the first half of the inequality (\ref{eq:ruzsa}) (i.e., the inequality with the first term inside the minimum).
%Changing $A$ to $-A$ and $B$ to $-B$ gives the inequality (\ref{eq:ruzsa}).

Next, we notice that  changing $B$ to $-B$ and $C$ to $-C$, Theorem \ref{thm:ub} becomes equivalent to 
$$
|A|\,|A-(B+C)| \le c_n|A-B|\,|A-C|,
$$
and we finish the proof of the inequality (\ref{eq:ruzsa}) (i.e., we get the inequality with the second term inside the minimum) by applying  Litvak's observation on the left hand side.
%We noticed that  $|A-B| = |A+B|$ if one of $A$ or $B$ is symmetric thus (\ref{eq:ruzsas}) follows from Theorem \ref{thm:ub} with $C$ replaced by $-C$. Next if $C$ is symmetric then (\ref{eq:ruzsas}) is equivalent to 
%$$|A|\,|A+B+C| \le c_n|A-B|\,|A+C|,$$
\end{proof}

\begin{rem} We note that the constant in (\ref{eq:ruzsa}) %and (\ref{eq:ruzsa2}) 
is sharp in the case $n=2$, no matter which term inside the minimum we consider. Indeed, if $C=\{0\}$, then (\ref{eq:ruzsa}) becomes
$
|A+B| \le \frac{3}{2}
%\frac{1}{2^2}{ 4 \choose 2}
|A-B|,
$
which is sharp for triangles $A,B$ such that  $A=-B$.
\end{rem}

%\begin{rem}  
%As a consequence of Theorem \ref{thm:ub} and Theorem \ref{thm:ruzsa},
%we see that $|A|\cdot|A+B+C|$ can be bounded in terms of $|\pm A\pm B|$
%and $|\pm A\pm C|$ with any choice of signs, up to constants. This has 
%natural extensions when we consider more than 3 sets that may be useful
%for some applications, but we do not bother to detail them here. 
%\end{rem}  

Next we would like to discuss an improvement of  Theorem \ref{thm:ruzsa} via an improvement of Litvak's inequality (\ref{eq:lit})  in $\R^2$.

\begin{thm}\label{lmcool2} Let $A, C$ be two convex sets in $\R^2,$ then
\begin{equation}\label{eqprs3}
 |A-C|\le |A+C|+2\sqrt{|A|\,|C|},
\end{equation}
moreover, the equality in the above inequality is only possible in the following cases
\begin{itemize}
    \item One of the sets $A$ or $C$ is a singleton or a segment and the other one is any convex body.
    \item $A$ is a triangle and $C=t A+b,$ for some $t>0$ and $b\in \R^2.$
\end{itemize}
\end{thm}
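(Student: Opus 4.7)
My plan is to translate the inequality into a mixed-volume statement and then combine Minkowski's quadratic inequality in the plane with a sharp Rogers-Shephard-type bound. By the planar mixed-volume formula $|A\pm C|=|A|+|C|+2V(A,\pm C)$, inequality \eqref{eqprs3} is equivalent to
\[
V(A,-C)-V(A,C)\le \sqrt{|A|\,|C|}.
\]
Using the Minkowski additivity $V(A,C)+V(A,-C)=V(A,C-C)$, where $C-C$ is the centrally symmetric difference body of $C$, this rewrites as $V(A,C-C)\le 2V(A,C)+\sqrt{|A|\,|C|}$. By Minkowski's planar inequality $V(A,C)^2\ge |A|\,|C|$ one has $2V(A,C)\ge 2\sqrt{|A|\,|C|}$, so it would suffice to establish the sharper mixed Rogers-Shephard inequality
\[
V(A,C-C)\le 3\sqrt{|A|\,|C|}.
\]
In the case $A=C$ this recovers the classical planar Rogers-Shephard inequality $|C-C|\le 6|C|$, via the identities $V(C,C-C)=|C|+V(C,-C)$ and $V(C,-C)=\tfrac{1}{2}(|C-C|-2|C|)$. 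For general $A,C$, I would exploit that $C-C$ is a zonoid in $\R^2$ and reduce to the extremal case where $C$ is a triangle, using the Minkowski decomposition of planar convex polygons as sums of triangles and segments (possibly combined with a shadow-system argument).

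For the equality analysis, when $|A|\cdot|C|=0$, one may assume without loss of generality that $A$ is a segment $[0,u]$ or a singleton; a direct computation gives $|A+C|=|A-C|=|C|+|u|\cdot|P_{u^\perp}C|$, so equality holds trivially with $2\sqrt{|A|\,|C|}=0$. When $|A|\cdot|C|>0$, equality forces $V(A,C)=\sqrt{|A|\,|C|}$, hence by the equality case of Minkowski's quadratic inequality, $A$ and $C$ are homothetic: $C=tA+b$ for some $t>0$ and $b\in\R^2$. Substituting and using $V(A,-A)=\tfrac{1}{2}(|A-A|-2|A|)$, one computes
\[
V(A,-C)-V(A,C)=\tfrac{t}{2}\bigl(|A-A|-4|A|\bigr),
\]
which equals $\sqrt{|A|\,|C|}=t|A|$ precisely when $|A-A|=6|A|$ --- the equality case of the planar Rogers-Shephard inequality, occurring exactly when $A$ is a triangle. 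Conversely, a direct computation (as in the hexagonal case $T+t(-T)$ considered earlier in the paper) verifies that equality indeed holds when $A$ is a triangle and $C=tA+b$.

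I expect the main obstacle to be the sharp bound $V(A,C-C)\le 3\sqrt{|A|\,|C|}$ for arbitrary convex bodies $A,C$ in the plane. Elementary approaches such as Cauchy-Schwarz applied to the Fourier expansions of the support functions (writing $V(A,-C)-V(A,C)$ as an integral of $\rho_A$ against the odd part of $h_C$) yield inequalities of the correct shape but with constants that fail to be tight when $A=C$ is a triangle, so a more delicate geometric argument --- likely exploiting the Minkowski decomposability of planar polygons into triangles and segments, or first settling the triangle case and then extending by a density/approximation argument --- appears necessary.
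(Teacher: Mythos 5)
The reduction to the ``mixed Rogers--Shephard'' inequality $V(A,C-C)\le 3\sqrt{|A|\,|C|}$ is where your argument breaks down: that inequality is false. Take $A=[0,\varepsilon]\times[0,1]$ and $C=[0,1]^2$, so $|A|=\varepsilon$, $|C|=1$ and $C-C=[-1,1]^2$. Then $A+(C-C)=[-1,1+\varepsilon]\times[-1,2]$ has area $6+3\varepsilon$, hence $V(A,C-C)=\tfrac{1}{2}(6+3\varepsilon-\varepsilon-4)=1+\varepsilon$, while $3\sqrt{|A|\,|C|}=3\sqrt{\varepsilon}\to 0$. The same failure persists when $C$ is a triangle, so neither decomposing $C$ into triangles nor a shadow-system argument can rescue the statement. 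The root cause is that replacing $2V(A,C)$ by its Minkowski lower bound $2\sqrt{|A|\,|C|}$ is disastrous when $A$ and $C$ are far from homothetic: here $2V(A,C)=1+\varepsilon$ whereas $2\sqrt{|A|\,|C|}=2\sqrt{\varepsilon}$, and it is exactly the larger term $2V(A,C)$ that keeps the target inequality $V(A,C-C)\le 2V(A,C)+\sqrt{|A|\,|C|}$ true. Your equality analysis inherits this gap: you conclude $V(A,C)=\sqrt{|A|\,|C|}$ from tightness of a chain that passes through the false lemma, so it is unsupported as written --- though the direct verification that a triangle $A$ with $C=tA+b$ gives equality, via $V(A,-C)-V(A,C)=\tfrac{t}{2}(|A-A|-4|A|)=t|A|$ iff $|A-A|=6|A|$, is itself correct.

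The paper's proof never discards the $V(A,C)$ term. It works directly with $V(A,-C)\le V(A,C)+\sqrt{|A|\,|C|}$ and observes that this inequality is additive in the $A$ slot: if it holds for $A_1$ and $A_2$, then it holds for $A_1+A_2$, because $\sqrt{|A_1|}+\sqrt{|A_2|}\le\sqrt{|A_1+A_2|}$ by Brunn--Minkowski. Combined with the Minkowski decomposition of planar convex polygons into triangles and segments (and the symmetry $(A,C)\mapsto(-C,-A)$, which gives the same additivity in the $C$ slot), this reduces to $A$ and $C$ nondegenerate triangles. After normalizing $C\subset A$ to touch all three edges of $A$, so that $V(A,C)=|A|$, a shadow system finishes: moving one vertex of $C$ along a segment, $t\mapsto V(A,-C_t)$ is convex while $t\mapsto |C_t|$ is affine, so $V(A,-C_t)-\sqrt{|A|\,|C_t|}$ is convex and attains its maximum at the endpoints of the motion. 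The equality case is then settled by tracking when that convex functional can be constant on the interval, which is substantially more work than reading equality off a chain of estimates.
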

\begin{proof}

Let us first prove the inequality. We note that (\ref{eqprs3}) is equivalent to
\begin{equation}\label{eqprs2}
 V(A,-C)\le V(A,C)+\sqrt{|A|\,|C|}.
\end{equation}
Assume  $A=A_1+A_2$ and 
$$
 V(A_1,-C)\le V(A_1,C)+\sqrt{|A_1|\,|C|} \mbox{ and } 
 V(A_2,-C)\le V(A_2,C)+\sqrt{|A_2|\,|C|}.
$$
Then
$$
V(A, -C)=V(A_1,-C)+V(A_2,-C) \le V(A,C)+\sqrt{|A_1|\,|C|}+\sqrt{|A_1|\,|C|}.
$$
Using that by Brunn-Minkowski inequality in the plane $\sqrt{|A_1|}+\sqrt{|A_1|} \le \sqrt{|A|}$ we get
$$
V(A, -C) \le V(A,C)+\sqrt{|A|\,|C|}.
$$
Thus,   to prove $(\ref{eqprs2})$ we may assume that both $A$ and $C$ are triangles. Indeed, any planar convex body  can be approximated by a polygon and any planar polygon can be written as a Minkowski sum of triangles \cite{Sch14:book} (here we will treat a segment as a degenerate triangle). If $A$ or $C$ is a segment, then the inequality becomes an equality. Thus, we may assume that $A$ and $C$ are not degenerate triangles. We notice that (\ref{eqprs2}) is invariant under dilation and shift of the convex body $C$, thus we may assume that $C \subset A$ and has common points with all three edges of $A$. Thus $V(A,C)=|A|$ (see, for example, \cite{ SZ16}). Finally, our goal is to show that for any triangles $A,C$, such that $C \subset A$ and $C$ touches all edges of $A$, we have
\begin{equation}\label{shadow}
V(A, -C) \le |A|+\sqrt{|A|\,|C|}.
\end{equation}
To prove the above inequality, one may use the technique of shadow systems (\cite{RS58:2} or \cite{Sch14:book}, Section 10.4).  In this particular case, the method can be applied directly.
Indeed, if $C=A$ then (\ref{shadow}) becomes an equality. Otherwise, let $C=\mbox{conv}\{c_1, c_2, c_3\}$ and one of the $c_i's$ is not a vertex of $A$. Assume, without loss of generality, that $c_1$ is not a vertex of $A$. Then, there exists a vertex $a_1$ of $A$ such that the segment $(a_1,c_1)$ does not intersect $C$. Let $c_{t}=tc_1 +(1-t)a_1$ for $t\in[t_0,t_1]$, where $[t_0,t_1]$ is the largest interval such that $c_{t}\in A$ and $c_{t}$ is not aligned with $c_2$ and $c_3$, for all $t \in (t_0,t_1)$. Notice that $c_{t_0}$ and $c_{t_1}$ are either vertices of $A$ or belong to the line containing $[c_2,c_3]$. Let $C_t=\mbox{conv}\{c_{t}, c_2, c_3\}$. Then
$$
V(A, -C_t)=V(-A, C_t)=\frac{1}{2}\sum_{i=1}^3 h_{C_t}(-u_i) |F_i|,
$$
where $u_i,$  $i=1,2,3$ is a normal vector to the edge $F_i$ of $A$. The function $h_{C_t}(-u_i)$ is convex in $t$ and thus the same is true for $V(A,-C_t)$.  We also notice that $|C_t|$ is an affine function of $t$ on $[t_0,t_1]$ and thus
$$
g(t) = V(A, -C_t) - \sqrt{|A|\,|C_t|}
$$
is a convex function of $t\in [t_0,t_1]$ and thus $\max_{t\in [t_0,t_1]}=\max\{g(t_0), g(t_1)\}.$ Thus the maximum of $g(t)$ achieved when either when $C_{t}$ becomes a segment (and the proof is complete in this case) or $c_{t}$ reaches a vertex of $A$. Then either $C=A$ or there is a vertex of $C$ which is not a vertex of $A$ and we repeat the procedure. 

Now let us consider the equality case. Assume 
\begin{equation}\label{equaltrian}
 V(A,-C) = V(A,C)+\sqrt{|A|\,|C|}.
\end{equation}
First, let us assume that  $A$ is not a triangle. Then $A$ is decomposable: it can be written as $A=A_1+A_2$ where $A_1$ is not homothetic to $A_2$. Then, applying (\ref{eqprs2}) we get
$$
 V(A_1,-C)\le V(A_1,C)+\sqrt{|A_1|\,|C|} \mbox{ and } 
 V(A_2,-C)\le V(A_2,C)+\sqrt{|A_2|\,|C|}.
$$
From (\ref{equaltrian}), we have equality in the above inequalities and 
$$
\sqrt{|A_1|\,|C|}+\sqrt{|A_2|\,|C|} =\sqrt{|A_1+A_2|\,|C|}.
$$
The above equality is only possible in two cases: first, when there is an equality in Brunn-Minkowski inequality, which would result $A_2$ to be homothetic to $A_1,$ and we assumed that this is not the case, and the second case,  when  $|C|=0,$ i.e. $C$ is the singleton or a segment.
If $C$ is not a triangle, then, the above discussion shows that $|A|=0$. 

Now, we assume that $A$ and $C$ are non degenerate triangles. Using homogeneity of equality  (\ref{equaltrian}) we may assume that the triangle $C$ touches all edges of $A$ and equality  (\ref{equaltrian})  becomes
\begin{equation}
 V(A,-C)- \sqrt{|A|\,|C|} = |A|.
\end{equation}
Assume towards the contradiction  that  $C \not = A$. Then, there is a vertex $c_1$ of the triangle $C=\conv(c_1,c_2,c_3)$ which is not a vertex of $A$. We reproduce the same shadow system $(C_t)_{t\in[t_0,t_1]}$ as in the proof of the inequality. We only need to prove that the function $g(t) = V(A, -C_t) - \sqrt{|A|\,|C_t|}$ is not constant on $[t_0,t_1]$. To do this, we prove that, among the two functions, $V(A, -C_t)$ and $\sqrt{|C_t|}$ at least one is not affine. There are two cases. If $|C_t|$ is not constant then $\sqrt{|C_t|}$ is strictly concave, thus $g$ is not a constant. If $|C_t|$ is constant then all vertices of $C$ are different from the vertices of $A$. Recall that $V(A, -C_t)=\frac{1}{2}\sum_{i=1}^3 h_{C_t}(-u_i)|F_i|$. Let $u_2, u_3$ be the normal of the edges of $A$ which do not contain $c_1$. Then it is easy to see that $h_{C_t}(-u_2)+h_{C_t}(-u_3)$ is not affine. Thus $V(A,-C_t)$ is not constant. This is a contradiction. 
\end{proof}

The inequality \eqref{eqprs3} is an intriguing improvement (in dimension 2) of Litvak's observation. To see this, observe that by the Brunn-Minkowski inequality,  
$|A+C|\geq (\sqrt{|A|}+\sqrt{|C|})^2
= |A|+|C|+2\sqrt{|A|\,|C|}
= (\sqrt{|A|}-\sqrt{|C|})^2+4\sqrt{|A|\,|C|}$,
and hence the right hand side of \eqref{eqprs3} is bounded by $\frac{3}{2}|A+C|$.
Thus, in dimension 2, since $c_2=1$, we obtain
\ben\begin{split}
|A|\,|A+B+C| &\le |A+B|\,|A+C| \\
&\le (|A-B|+2\sqrt{|A|\,|B|})\,|A+C|\\
&\le \bigg[\frac{3}{2}|A-B|-\frac{1}{2}(\sqrt{|A|}-\sqrt{|B|})^2\bigg] |A+C|,
\end{split}\een
which is an improvement of Theorem \ref{thm:ruzsa} in dimension 2.

Let us define the {\it additive asymmetry} of the pair $(A,C)$ by $$
\text{asym}(A,C)=\bigg| \frac{|A+C|}{|A-C|}-1\bigg|,
$$
and note that $\text{asym}(A,C)$ is trivially 0 if either $A$ or $C$ is symmetric. 
Observe that inequality \eqref{eqprs3}  may be rewritten as
\be\label{asym}
\text{asym}(A,C) \leq 2e^{-d(A,C)} ,
\ee
where $d(A,C)=\log \frac{|A-C|}{\sqrt{|A|\cdot |C|}}$ is the Euclidean analogue of the Ruzsa distance defined at the beginning of this section. %Combining inequality \eqref{asym} with the Ruzsa triangle inequality implies that
%$$
%\text{asym}(A,B)\geq \text{asym}(A,C)\text{asym}(C,B) ,
%$$
%which does not seem obvious.
One wonders if this inequality extends to dimension higher than 2.

Finally we note that comparison of cardinality of $A+C$ and $A-C$
has also been of interest in additive combinatorics (see, e.g., \cite{PF73, TV06:book}),
and there are also results comparing the entropies of sums and differences
of independent random variables in different abelian groups (see, e.g., \cite{MK10:isit, KM14, ALM17, LM19}).

\bibliographystyle{plain}
\bibliography{pustak}

\begin{thebibliography}{10}

\bibitem{ALM17}
E.~Abbe, J.~Li, and M.~Madiman.
\newblock Entropies of weighted sums in cyclic groups and an application to
  polar codes.
\newblock {\em Entropy, Special issue on ``Entropy and Information
  Inequalities'' (edited by V. Jog and J. Melbourne)}, 19(9), September 2017.

\bibitem{AAGHV17}
D.~Alonso-Guti\'{e}rrez, S.~Artstein-Avidan, B.~Gonz\'{a}lez~Merino, C.~H.
  Jim\'{e}nez, and R.~Villa.
\newblock Rogers-{S}hephard and local {L}oomis-{W}hitney type inequalities.
\newblock {\em Math. Ann.}, 374(3-4):1719--1771, 2019.

\bibitem{AFO14}
S.~Artstein-Avidan, D.~Florentin, and Y.~Ostrover.
\newblock Remarks about mixed discriminants and volumes.
\newblock {\em Commun. Contemp. Math.}, 16(2):1350031, 14, 2014.

\bibitem{BB12}
P.~Balister and B.~Bollob\'as.
\newblock Projections, entropy, and sumsets.
\newblock {\em Combinatorica}, 32(2):125--141, 2012.

\bibitem{BM21}
F.~Barthe and M.~Madiman.
\newblock {Volumes of subset Minkowski sums and the Lyusternik region}.
\newblock {\em Preprint, {\tt arXiv:2112.06518}}, 2021.

\bibitem{BFLM17}
S.~Bobkov, M.~Fradelizi, J.~Li, and M.~Madiman.
\newblock When can one invert {H}\"older's inequality? (and why one may want
  to).
\newblock {\em Preprint}, 2017.

\bibitem{BM11:it}
S.~Bobkov and M.~Madiman.
\newblock The entropy per coordinate of a random vector is highly constrained
  under convexity conditions.
\newblock {\em IEEE Trans. Inform. Theory}, 57(8):4940--4954, August 2011.

\bibitem{BM12:jfa}
S.~Bobkov and M.~Madiman.
\newblock Reverse {B}runn-{M}inkowski and reverse entropy power inequalities
  for convex measures.
\newblock {\em J. Funct. Anal.}, 262:3309--3339, 2012.

\bibitem{BL91}
B.~Bollob{\'a}s and I.~Leader.
\newblock Compressions and isoperimetric inequalities.
\newblock {\em J. Combinatorial Theory Ser. A}, 56(1):47--62, 1991.

\bibitem{BZ88:book}
Yu.~D. Burago and V.~A. Zalgaller.
\newblock {\em Geometric inequalities}, volume 285 of {\em Grundlehren der
  Mathematischen Wissenschaften [Fundamental Principles of Mathematical
  Sciences]}.
\newblock Springer-Verlag, Berlin, 1988.
\newblock Translated from the Russian by A. B. Sosinski{\u \i}, Springer Series
  in Soviet Mathematics.

\bibitem{Car09}
M.~Cardin.
\newblock Multivariate measures of positive dependence.
\newblock {\em Int. J. Contemp. Math. Sci.}, 4(1-4):191--200, 2009.

\bibitem{Dev14}
M.~DeVos.
\newblock A short proof of {K}neser's addition theorem for {A}belian groups.
\newblock In {\em Combinatorial and additive number theory---{CANT} 2011 and
  2012}, volume 101 of {\em Springer Proc. Math. Stat.}, pages 39--41.
  Springer, New York, 2014.

\bibitem{Eme75}
W.~R. Emerson.
\newblock Averaging strongly subadditive set functions in unimodular amenable
  groups. {I}.
\newblock {\em Pacific J. Math.}, 61(2):391--400, 1975.

\bibitem{Fen36}
W.~Fenchel.
\newblock {Generalisation du theoreme de Brunn et Minkowski concernant les
  corps convexes}.
\newblock {\em C. R. Acad. Sci. Paris.}, 203:764--766, 1936.

\bibitem{FH05}
S.~Foldes and P.~L. Hammer.
\newblock Submodularity, supermodularity, and higher-order monotonicities of
  pseudo-{B}oolean functions.
\newblock {\em Math. Oper. Res.}, 30(2):453--461, 2005.

\bibitem{FGM03}
M.~Fradelizi, A.~Giannopoulos, and M.~Meyer.
\newblock Some inequalities about mixed volumes.
\newblock {\em Israel J. Math.}, 135:157--179, 2003.

\bibitem{FLMZ22}
M.~Fradelizi, D.~Langharst, M.~Madiman, and A.~Zvavitch.
\newblock {The Gaussian measure of Minkowski sums of symmetric convex sets}.
\newblock {\em Preprint}, 2022.

\bibitem{FLZ19}
M.~Fradelizi, Z.~L\'angi, and A.~Zvavitch.
\newblock Volume of the minkowski sums of star-shaped sets.
\newblock {\em Preprint}, 2019.

\bibitem{FLM20}
M.~Fradelizi, J.~Li, and M.~Madiman.
\newblock Concentration of information content for convex measures.
\newblock {\em Electron. J. Probab.}, 25(20):1--22, 2020.
\newblock Available online at {\tt arXiv:1512.01490v3}.

\bibitem{FMMZ18}
M.~Fradelizi, M.~Madiman, A.~Marsiglietti, and A.~Zvavitch.
\newblock {The convexification effect of Minkowski summation}.
\newblock {\em EMS Surveys in Mathematical Sciences}, 5(1/2):1--64, 2019.
\newblock Available online at {\tt arXiv:1704.05486}.

\bibitem{FMMZ22}
M.~Fradelizi, M.~Madiman, M.~Meyer, and A.~Zvavitch.
\newblock On the volume of the {Minkowski} sum of zonoids.
\newblock {\em Preprint}, 2022.

\bibitem{FMW16}
M.~Fradelizi, M.~Madiman, and L.~Wang.
\newblock Optimal concentration of information content for log-concave
  densities.
\newblock In C.~Houdr\'{e}, D.~Mason, P.~Reynaud-Bouret, and J.~Rosinski,
  editors, {\em High Dimensional Probability VII: The Carg\`ese Volume},
  Progress in Probability. Birkh\"auser, Basel, 2016.
\newblock Available online at {\tt arXiv:1508.04093}.

\bibitem{FM14}
M.~Fradelizi and A.~Marsiglietti.
\newblock On the analogue of the concavity of entropy power in the
  {B}runn-{M}inkowski theory.
\newblock {\em Adv. in Appl. Math.}, 57:1--20, 2014.

\bibitem{GHP02}
A.~Giannopoulos, M.~Hartzoulaki, and G.~Paouris.
\newblock On a local version of the {A}leksandrov-{F}enchel inequality for the
  quermassintegrals of a convex body.
\newblock {\em Proc. Amer. Math. Soc.}, 130(8):2403--2412, 2002.

\bibitem{GMR08}
K.~Gyarmati, M.~Matolcsi, and I.~Z. Ruzsa.
\newblock Pl\"unnecke's inequality for different summands.
\newblock In {\em Building bridges}, volume~19 of {\em Bolyai Soc. Math.
  Stud.}, pages 309--320. Springer, Berlin, 2008.

\bibitem{Hoc22}
M.~Hochman.
\newblock {\em On Self-Similar Sets with Overlaps and Inverse Theorems for
  Entropy in ${\mathbb R}^d$}.
\newblock Memoirs of the AMS. Amer. Math. Soc., 2022, to appear.

\bibitem{HS11:1}
D.~Hug and R.~Schneider.
\newblock Reverse inequalities for zonoids and their application.
\newblock {\em Adv. Math.}, 228(5):2634--2646, 2011.

\bibitem{Kne53}
M.~Kneser.
\newblock Absch\"{a}tzung der asymptotischen {D}ichte von {S}ummenmengen.
\newblock {\em Math. Z.}, 58:459--484, 1953.

\bibitem{KM14}
I.~Kontoyiannis and M.~Madiman.
\newblock Sumset and inverse sumset inequalities for differential entropy and
  mutual information.
\newblock {\em IEEE Trans. Inform. Theory}, 60(8):4503--4514, August 2014.

\bibitem{LM19}
J.~Li and M.~Madiman.
\newblock A combinatorial approach to small ball inequalities for sums and
  differences.
\newblock {\em Combinatorics, Probability and Computing}, 28(1):pp. 100--129,
  January 2019.

\bibitem{Mad08:itw}
M.~Madiman.
\newblock On the entropy of sums.
\newblock In {\em Proc. IEEE Inform. Theory Workshop}, pages 303--307. Porto,
  Portugal, 2008.

\bibitem{MG19}
M.~Madiman and F.~Ghassemi.
\newblock Combinatorial entropy power inequalities: A preliminary study of the
  {S}tam region.
\newblock {\em IEEE Trans. Inform. Theory}, 65(3):1375--1386, March 2019.
\newblock Available online at {\tt arXiv:1704.01177}.

\bibitem{MK10:isit}
M.~Madiman and I.~Kontoyiannis.
\newblock The entropies of the sum and the difference of two {IID} random
  variables are not too different.
\newblock In {\em Proc. IEEE Intl. Symp. Inform. Theory}, Austin, Texas, June
  2010.

\bibitem{MK18}
M.~Madiman and I.~Kontoyiannis.
\newblock {Entropy bounds on abelian groups and the Ruzsa divergence}.
\newblock {\em IEEE Trans. Inform. Theory}, 64(1):77--92, January 2018.
\newblock Available online at {\tt arXiv:1508.04089}.

\bibitem{MMT12}
M.~Madiman, A.~Marcus, and P.~Tetali.
\newblock Entropy and set cardinality inequalities for partition-determined
  functions.
\newblock {\em Random Struct. Alg.}, 40:399--424, 2012.

\bibitem{MT10}
M.~Madiman and P.~Tetali.
\newblock Information inequalities for joint distributions, with
  interpretations and applications.
\newblock {\em IEEE Trans. Inform. Theory}, 56(6):2699--2713, June 2010.

\bibitem{MWW21}
M.~Madiman, L.~Wang, and J.~O. Woo.
\newblock R\'enyi entropy inequalities for sums in prime cyclic groups.
\newblock {\em SIAM J. Discrete Math.}, 35(3):1628--1649, 2021.

\bibitem{MP82}
J.~Moulin~Ollagnier and D.~Pinchon.
\newblock Filtre moyennant et valeurs moyennes des capacit\'es invariantes.
\newblock {\em Bull. Soc. Math. France}, 110(3):259--277, 1982.

\bibitem{NT17}
P.~Nayar and T.~Tkocz.
\newblock Personal communication.
\newblock 2017.

\bibitem{Pet12}
G.~Petridis.
\newblock New proofs of {P}l\"unnecke-type estimates for product sets in
  groups.
\newblock {\em Combinatorica}, 32(6):721--733, 2012.

\bibitem{PF73}
V.~P. Pigarev and G.~A. Fre{\u \i}man.
\newblock The relation between the invariants {$R$} and {$T$}.
\newblock In {\em Number-theoretic studies in the {M}arkov spectrum and in the
  structural theory of set addition ({R}ussian)}, pages 172--174. Kalinin. Gos.
  Univ., Moscow, 1973.

\bibitem{Plu70}
H.~Pl{\"u}nnecke.
\newblock Eine zahlentheoretische {A}nwendung der {G}raphentheorie.
\newblock {\em J. Reine Angew. Math.}, 243:171--183, 1970.

\bibitem{RS58:2}
C.~A. Rogers and G.~C. Shephard.
\newblock Some extremal problems for convex bodies.
\newblock {\em Mathematika}, 5:93--102, 1958.

\bibitem{Ruz89}
I.~Z. Ruzsa.
\newblock An application of graph theory to additive number theory.
\newblock {\em Scientia Ser. A Math. Sci. (N.S.)}, 3:97--109, 1989.

\bibitem{Ruz96:1}
I.~Z. Ruzsa.
\newblock Sums of finite sets.
\newblock In {\em Number theory ({N}ew {Y}ork, 1991--1995)}, pages 281--293.
  Springer, New York, 1996.

\bibitem{Ruz97}
I.~Z. Ruzsa.
\newblock The {B}runn-{M}inkowski inequality and nonconvex sets.
\newblock {\em Geom. Dedicata}, 67(3):337--348, 1997.

\bibitem{Ruz09:1}
I.~Z. Ruzsa.
\newblock Sumsets and entropy.
\newblock {\em Random Structures Algorithms}, 34(1):1--10, 2009.

\bibitem{SSZ16}
C.~Saroglou, I.~Soprunov, and A.~Zvavitch.
\newblock Characterization of simplices via the {B}ezout inequality for mixed
  volumes.
\newblock {\em Proc. Amer. Math. Soc.}, 144(12):5333--5340, 2016.

\bibitem{Sch14:book}
R.~Schneider.
\newblock {\em Convex bodies: the {B}runn-{M}inkowski theory}, volume 151 of
  {\em Encyclopedia of Mathematics and its Applications}.
\newblock Cambridge University Press, Cambridge, expanded edition, 2014.

\bibitem{SZ16}
I.~Soprunov and A.~Zvavitch.
\newblock Bezout inequality for mixed volumes.
\newblock {\em Int. Math. Res. Not. IMRN}, (23):7230--7252, 2016.

\bibitem{Sta76}
L.~L. Stach\'o.
\newblock On the volume function of parallel sets.
\newblock {\em Acta Sci. Math. (Szeged)}, 38(3-4):365--374, 1976.

\bibitem{Tao10}
T.~Tao.
\newblock Sumset and inverse sumset theory for {S}hannon entropy.
\newblock {\em Combin. Probab. Comput.}, 19(4):603--639, 2010.

\bibitem{TV06:book}
T.~Tao and V.~Vu.
\newblock {\em Additive combinatorics}, volume 105 of {\em Cambridge Studies in
  Advanced Mathematics}.
\newblock Cambridge University Press, Cambridge, 2006.

\bibitem{TV09:1}
T.~Tao and V.~Vu.
\newblock From the {L}ittlewood-{O}fford problem to the circular law:
  universality of the spectral distribution of random matrices.
\newblock {\em Bull. Amer. Math. Soc. (N.S.)}, 46(3):377--396, 2009.

\bibitem{Top98:book}
D.~M. Topkis.
\newblock {\em Supermodularity and complementarity}.
\newblock Frontiers of Economic Research. Princeton University Press,
  Princeton, NJ, 1998.

\bibitem{Xia19}
J.~Xiao.
\newblock B\'{e}zout-type inequality in convex geometry.
\newblock {\em Int. Math. Res. Not. IMRN}, (16):4950--4965, 2019.

\end{thebibliography}

\noindent Matthieu Fradelizi \\
LAMA, Univ Gustave Eiffel, Univ Paris Est Creteil, CNRS, F-77447 Marne-la-Vall\'ee,
France.\\
E-mail address: matthieu.fradelizi@univ-eiffel.fr 

\vspace{0.8cm}

\noindent Mokshay~Madiman\\
University of Delaware,
Department of Mathematical Sciences,
501 Ewing Hall, 
Newark, DE 19716, USA.
E-mail address: madiman@udel.edu

\vspace{0.8cm}

\noindent Artem Zvavitch \\
Department of Mathematical Sciences,
Kent State University,
Kent, OH 44242, USA,
E-mail address: azvavitc@kent.edu

\end{document}